\numberwithin{equation}{section}
\newtheorem{theorem}{Theorem}[section]
\newtheorem{lemma}[theorem]{Lemma}
\newtheorem{proposition}[theorem]{Proposition}
\theoremstyle{definition}
\newtheorem{definition}[theorem]{Definition}
\newtheorem{remark}[theorem]{Remark}
\newtheorem{claim}[theorem]{Claim}
\def\N{\mathbb{N}}
\def\Z{\mathbb{Z}}
\def\R{\mathbb{R}}
\def\AA{\mathcal{A}}
\def\SS{\mathcal{S}}
\def\F{\mathcal{F}}
\def\B{\mathcal{B}}
\def\LL{\mathcal{L}}
\def\U{\mathcal{U}}
\def\V{\mathcal{V}}
\def\W{\mathcal{W}}
\renewcommand{\phi}{\varphi}
\renewcommand{\epsilon}{\varepsilon}
\renewcommand{\1}{{\text{\Large $\mathfrak 1$}}}
\newcommand{\var}{\operatorname{var}}
\renewcommand{\emptyset}{\varnothing}
\newcommand{\til}{\widetilde}
\newcommand{\tmix}{t_{\mathrm{mix}}}
\newcommand{\tmax}{t_{\mathrm{hit}}}
\newcommand{\tcov}{t_{\mathrm{cov}}}
\newcommand{\tunif}{t_{\mathrm{unif}}}
\newcommand{\ball}[1]{\mathcal{B}(0,#1)}
\newcommand{\pr}[1]{\mathbb{P}\!\left(#1\right)}
\newcommand{\E}[1]{\mathbb{E}\!\left[#1\right]}
\newcommand{\estart}[2]{\mathbb{E}_{#2}\!\left[#1\right]}
\newcommand{\prstart}[2]{\mathbb{P}_{#2}\!\left(#1\right)}
\newcommand{\prcond}[3]{\mathbb{P}_{#3}\!\left(#1\;\middle\vert\;#2\right)}
\newcommand{\econd}[2]{\mathbb{E}\!\left[#1\;\middle\vert\;#2\right]}
\newcommand{\escond}[3]{\mathbb{E}_{#3}\!\left[#1\;\middle\vert\;#2\right]}
\newcommand{\wt}{\widetilde}
  \newcommand{\boxE}{\text{\scalebox{0.7}{$\square$}}} 
  \newcommand{\circleE}{\circ}
 \newcommand{\nsq}{N^{\boxE,\circleE}}
 \newcommand{\nbb}{N^{\circleE,\circleE}}
 \newcommand{\nsqsq}{N^{\boxE,\boxE}}
 \newcommand{\trs}{T^{\boxE,\circleE}}
 \newcommand{\trb}{T^{\circleE,\circleE}}
\newcommand{\ol}{\overline}
\newcommand{\ul}{\underline}
\newcommand{\norm}[1]{\left\| #1 \right\|}
\newcommand{\tn}{|\kern-.1em|\kern-0.1em|}
\newcommand{\crw}{C_d}
\begin{document}

\begin{frontmatter}
\title{Uniformity~of~the~late~points~of random~walk~on~$\Z_{\lowercase{n}}^{\lowercase{d}}$~for~$\lowercase{d} \geq 3$}
\runtitle{Uniformity of the late points of random walk}
\begin{aug}
\author{Jason Miller} and
\author{Perla Sousi}

\runauthor{Jason Miller and Perla Sousi}

\address{
Jason Miller\\
Massachusetts Institute of Technology\\
Cambridge, MA
}

\address{
Perla Sousi\\
University of Cambridge\\
Cambridge, UK
}

\affiliation{Massachusetts Institute of Technology 
and University of Cambridge}
\end{aug}

\begin{abstract}
Suppose that $X$ is a simple random walk on $\Z_n^d$ for $d \geq 3$ and, for each $t$, we let $\U(t)$ consist of those $x \in \Z_n^d$ which have not been visited by $X$ by time $t$.  Let $\tcov$ be the expected amount of time that it takes for $X$ to visit every site of $\Z_n^d$.  We show that there exists $0 < \alpha_0(d) \leq \alpha_1(d) < 1$ and a time $t_* = \tcov(1+o(1))$ as $n \to \infty$ such that the following is true.  For $\alpha > \alpha_1(d)$ (resp.\ $\alpha < \alpha_0(d)$), the total variation distance between the law of $\U(\alpha t_*)$ and the law of i.i.d.\ Bernoulli random variables indexed by $\Z_n^d$ with success probability~$n^{-\alpha d}$ tends to~$0$ (resp.\ $1$) as $n \to \infty$.  Let $\tau_\alpha$ be the first time $t$ that $|\U(t)| = n^{d-\alpha d}$.  We also show that the total variation distance between the law of $\U(\tau_\alpha)$ and the law of a uniformly chosen set from $\Z_n^d$ with size $n^{d-\alpha d}$ tends to $0$ (resp.\ $1$) for $\alpha > \alpha_1(d)$ (resp.\ $\alpha < \alpha_0(d)$) as $n \to \infty$.
\end{abstract}

\thispagestyle{empty}

\setattribute{keyword}{AMS}{AMS 2010 subject classifications:}
\begin{keyword}[class=AMS]
\kwd[Primary ]{60G50, 60J10, 82C41} 
\end{keyword}

\begin{keyword}
\kwd{Random walk, last visited set, late points, uniformity, cover time.}
\end{keyword}

\date{\today}

\end{frontmatter}

\maketitle

\section{Introduction}
\label{sec:intro}

Suppose that $X$ is a simple random walk on $\Z_n^d$ for $d \geq 3$ started from the stationary distribution.  
For each~$x \in \Z_n^d$, we let 
\[\tau_x = \min\{t \geq 0 : X(t) = x\}\]
be the first time that $X$ visits $x$.  For~$t\geq 0$ we define the process
$(Q_x(t))$ and the set $\U(t)$ respectively by
\[Q_x(t) = \1(\tau_x>t) \quad \text{for}\quad x\in 
\Z_n^d \quad \text{and} \quad
\U(t) = \{ x \in \Z_n^d : Q_x(t) = 1\}.\]
The purpose of the present work is to study the law of the set $\U(t)$ for different values of~$t$. 
The correlation structure of $(Q_x(t))$ was analyzed in the physics literature by Brummelhuis and Hilhorst \cite{BH}.  They show that the probability that any two given points~$x,y \in \Z_n^d$ which are far from each other are not visited by time~$t$ is asymptotically the same as in the case in which the points are independent, i.e., $\pr{Q_x(t) =1, Q_y(t) = 1} \sim \pr{Q_x(t) = 1}\pr{Q_y(t) = 1}$ as~$t,n\to \infty$ at a certain rate.  This leads them to assert that $\U(t)$ is ``statistically uniformly distributed at large distances'' \cite[Section~4]{BH}.  In this article, we study in what sense the \emph{entire} joint law of $(Q_x(t))$ is uniformly distributed for ``large'' times~$t$ rather than focus on its finite dimensional distributions.

In order to state our results and put them into better context with the existing literature, we first introduce the following parameters for $X$.  The \emph{maximal hitting time} ($\tmax$) and \emph{cover time} ($\tcov$) are respectively given by
\[\tmax = \max_{x,y} \estart{\tau_y}{x} \quad\text{and}\quad \tcov = \max_x \estart{\max_y \tau_y}{x}.\]
The times $\tmax,\tcov$ are related in that $\tcov = \tmax \log (n^d) (1+o(1))$ (see \cite{MATT_cover} as well as \cite[Chapter~11]{LPW}, in particular \cite[Exercise~11.4]{LPW}).  The rate at which the $o(1)$ term tends to $0$ will be important for technical reasons so in some cases we will describe times in terms of $\tmax$ or other ways rather than directly in terms of $\tcov$.  For measures~$\mu$ and~$\nu$, we recall that the \emph{total variation distance} is given by
\[ \| \mu-\nu\|_{\rm TV} = \sup_A |\mu(A) - \nu(A)|\]
where the supremum is taken over all measurable subsets $A$.

We will analyze the structure of $\U(t)$ at times of the form $\alpha \tcov$ for $\alpha > 0$.  We mention here three important regimes of $\alpha$.  The first is when $\alpha > 1$.  It is a consequence of work by Aldous \cite{ALD_thresh} that for any $\alpha > 1$ and $t > \alpha \tcov$ we have $\U(t) = \emptyset$ with high probability.  The case that $\alpha = 1$ was studied by Belius~\cite{BEL} using random interlacements \cite{SZNIT_interlacements} and later by Imbuzeiro-Oliveira and Prata~\cite{ImbuzPrata, Prata_thesis} using hitting time estimates \cite{IMBUZ_hit}.  The main focus of~\cite{BEL} is to obtain the Gumbel fluctuations of the cover time of $\Z_n^d$ and as a consequence of his analysis he shows in~\cite[Corollary~2.4]{BEL} that the set of uncovered points at time~$t_\beta = \tmax (\log(n^d) + \beta)$ for $\beta\in \R$ suitably rescaled converges to a Poisson point process on $(\R/\Z)^d$ of intensity~$e^{-\beta}\lambda$ where $\lambda$ denotes Lebesgue measure on $(\R/\Z)^d$. This was improved upon in~\cite{ImbuzPrata, Prata_thesis}, where it is shown that the Gumbel fluctuations for the cover time hold for more general graphs. Moreover they show that the total variation distance between the law of~$\U(t_\beta)$ and that of a random subset of~$\Z_n^d$ where points are included independently with probability~$e^{-\beta}n^{-\alpha d}$ tends to~$0$ as $n\to \infty$. The regime of times considered in~\cite{BEL, ImbuzPrata, Prata_thesis} is special because $|\U(t_\beta)|$ is tight as $n \to \infty$ for any fixed $\beta \in \R$.  Additionally, the law of the evolution of $\U(t_\beta)$ as $\beta$ varies is also described in~\cite{ImbuzPrata, Prata_thesis}.

The final regime of times is when $\alpha\in (0,1)$.  In contrast to the cases described above, for such choices of $\alpha$ the size of $|\U(t)|$ grows with $n$.  In particular, it is shown in the proof of \cite[Theorem~4.1]{PR_lamp} that it follows from \cite{ALD_thresh} that $|\U(t)| = n^{d-\alpha d + o(1)}$ with high probability as $n \to \infty$.  The combinatorial method of \cite{ImbuzPrata, Prata_thesis} does not extend directly to this regime of times because the number of possible sets one is led to consider is simply too large.  The following alternative ``uniformity'' statement for $\U(t)$ was proved in \cite{MP_unif}.  If $\alpha \in (\tfrac{1}{2},1)$ (resp.\ $\alpha \in (0,\tfrac{1}{2})$) then $\U(t)$ is (resp.\ is not) ``uniformly random'' in the following sense.  Suppose that $\V \subseteq \Z_n^d$ is chosen independently of $X$ where each $x \in \Z_n^d$ is included in $\V$ independently with probability $\tfrac{1}{2}$.  Then the total variation distance between the laws of $\V \setminus \U(t)$ and $\V$ tends to $0$ (resp.\ $1$) as $n \to \infty$ for $\alpha \in (\tfrac{1}{2},1)$ (resp.\ $(0,\tfrac{1}{2})$).  That is, for $\alpha \in (\tfrac{1}{2},1)$, $\U(t)$ in a certain sense does not possess any sort of systematic geometric structure that would make it possible to determine from $\V \setminus \U(t)$ the location of the points in $\U(t)$.  The threshold $\alpha=\tfrac{1}{2}$ is important because $|\V \setminus \U(t)| = n^{d-\alpha d + o(1)}$ for $\alpha \in (0,\tfrac{1}{2})$ while $|\V| = n^{d/2+o(1)}$ by the central limit theorem, so in this case the two sets can be distinguished for elementary reasons.  We remark in passing that a similar problem for ``thin'' 3D torii is considered in \cite{DDMP} and the $d=2$ version of this problem is solved in \cite{PR_lamp} using results from \cite{DPRZ_late}.

In contrast to \cite{MP_unif}, in this work we are going to study the asymptotic law of~$\U(t)$ itself in the sense of~\cite{ImbuzPrata, Prata_thesis} in the regime of times with $\alpha \in (0,1)$ \emph{without adding the extra noise}.  It will be rather important for us to choose the time $t$ at which we consider $\U(t)$ very precisely since we will later need a very accurate estimate of~$\pr{\tau_x > t}$.  In the theorem statement which follows, $t_*$ indicates a time which we will define later in the article (equation~\eqref{eq:deftstar}) and it satisfies
\[ t_*=\tmax\log (n^d)(1+o(1)) = \tcov(1+o(1)) \ \text{ as } \ n\to\infty. \] 
For any $\alpha > 0$ we denote by $\nu_{\alpha, n}$ the law of $(Z_x)$, where the $Z_x$ are i.i.d.\ Bernoulli random variables indexed by $\Z_n^d$ with success probability $n^{-\alpha d}$.  We will write $\LL(\cdot)$ to indicate the law of a random variable.  Our first main result is the following.

\begin{theorem}
\label{thm:uniform}
For each $d\geq 3$ there exist $\alpha_0(d),\alpha_1(d)\in (0,1)$ with $\alpha_0(d) \leq \alpha_1(d)$ such that for all $\alpha \in (\alpha_1(d),\infty)$ we have
\begin{align}
\|\LL(\U(\alpha t_*)) - \nu_{\alpha,n}\|_{\rm{TV}} &= o(1) \ \text{ as } n\to \infty \label{eq:unif_a0}
\intertext{and for all $\alpha \in (0,\alpha_0(d))$ we have}
\|\LL(\U(\alpha t_*))  - \nu_{\alpha,n}\|_{\rm{TV}} &=1 - o(1) \ \text{ as } n\to \infty. \label{eq:unif_a1}
\end{align}
\end{theorem}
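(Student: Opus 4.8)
The plan is to handle the two regimes separately, and in both the key preliminary is a very sharp estimate for the tail of a single hitting time. First I would fix $t_*$ (this is the content of~\eqref{eq:deftstar}) in such a way that $\pr{\tau_x > \alpha t_*} = n^{-\alpha d}(1+o(1))$ holds with an error that is $o(n^{-d})$ uniformly in $x \in \Z_n^d$, so that the error stays summable when we range over all $n^d$ sites. This requires a precise asymptotic for $\pr{\tau_x > t}$ when $t \gg \tmax$, obtained from the spectral gap of the walk together with Green's-function estimates; it is essentially the statement that $\pr{\tau_x > t} = \exp(-t\,\cp(\{x\})/n^d)(1+o(1))$ with $\cp(\{x\})=1/g(0)$, where $g$ denotes the Green's function of simple random walk on $\Z^d$, and this is where transience ($d\geq 3$) enters.

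\emph{The regime $\alpha > \alpha_1(d)$, equation~\eqref{eq:unif_a0}.} Fix a mesoscopic scale $r = n^\beta$ with $0 < \beta < \alpha$, and for each $x$ decompose the trajectory of $X$ up to time $\alpha t_*$ into its successive excursions from $\partial B(x,r)$ to $\partial B(x,2r)$. Writing $K_x$ for the number of such excursions, a renewal-type argument together with mixing of the walk shows that $K_x$ concentrates about a deterministic $\overline K$ of order roughly $n^{\beta(d-2)}\log n$, and that, conditionally on $K_x$ and on the sequence of entrance points to $B(x,r)$, the walk fails to hit $x$ with conditional probability $p_x$ that differs from $n^{-\alpha d}$ only through the fluctuation of $K_x$, by a relative amount of size $(\log n)^{O(1)}\overline K^{-1/2}$. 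The structural heart of this direction is that, conditionally on the ``coarse trajectory'' (the ordered list of which sphere each excursion visits, together with its entrance and exit points), the events $\{x\in\U(\alpha t_*)\}$ are independent across any collection of points whose $2r$-balls are pairwise disjoint, since they are then measurable with respect to disjoint pieces of the excursion randomness. One then couples this conditionally independent family with an i.i.d.\ family of law $\nu_{\alpha,n}$; the resulting total-variation cost is bounded by the sum of (i) $\sum_x \E{|p_x - n^{-\alpha d}|}$, of order $n^{d-\alpha d-\beta(d-2)/2+o(1)}$; (ii) $\pr{\text{some }B(x,2r)\text{ contains two points of }\U(\alpha t_*)}$, bounded by $n^d r^d \max_{1\le|x-y|\le 2r}\pr{\tau_x>\alpha t_*,\ \tau_y>\alpha t_*}$; and (iii) the probability that some $K_x$ is far from $\overline K$, which is superpolynomially small after a union bound. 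Using that $\pr{\tau_x>t,\ \tau_y>t}$ is of order $n^{-c\alpha d}$ for a constant $c=c(|x-y|)\in(1,2)$, with $c$ smallest (equal to $c_d(1)=2g(0)/(g(0)+g(1))$, where $g(1)$ is the value of $g$ at a nearest neighbour of the origin) when $|x-y|=1$, term (ii) is of order $n^{d+\beta d-c_d(1)\alpha d}$; optimising over $\beta\in(0,\alpha)$ so as to make (i) and (ii) simultaneously $o(1)$ shows that the total-variation distance is $o(1)$ once $\alpha$ exceeds a certain threshold, which one calls $\alpha_1(d)$.

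\emph{The regime $\alpha < \alpha_0(d)$, equation~\eqref{eq:unif_a1}.} Here I would exhibit an event separating the two laws. Let $W$ be the number of ordered pairs $(x,y)$ of points of $\U(\alpha t_*)$ with $|x-y|=1$. Under $\nu_{\alpha,n}$ its mean is of order $n^d\cdot n^{-2\alpha d}$, whereas under the law of $\U(\alpha t_*)$ a refined two-point estimate gives a mean of order $n^d\cdot n^{-c_d(1)\alpha d}$ with $c_d(1)=2g(0)/(g(0)+g(1))\in(1,2)$ — the point being that the capacity of a pair of neighbours is strictly less than twice that of a single point, equivalently that $\{\tau_x>t\}$ and $\{\tau_y>t\}$ are positively correlated for neighbouring $x,y$, as already observed in~\cite{BH}. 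Since $c_d(1)<2$, the real mean, call it $\mu$, exceeds the i.i.d.\ mean $\mu'$ by a polynomial factor $n^{(2-c_d(1))\alpha d}$. Provided $\alpha$ is small enough that $\mu\to\infty$ and that $W$ concentrates under the real law — the latter requiring $\var(W)=o(\mu^2)$, which I would establish from three- and four-point hitting estimates showing the relevant event-correlations to be negligible — the event $A=\{W\ge\mu/2\}$ satisfies $\pr{\U(\alpha t_*)\in A}\to 1$ while, by Markov's inequality, $\nu_{\alpha,n}(A)\le 2\mu'/\mu\to 0$, which is~\eqref{eq:unif_a1}. The largest $\alpha$ for which this argument runs is $\alpha_0(d)$; since the obstructions exploited in the two regimes are of genuinely different strengths, one does not expect $\alpha_0(d)$ and $\alpha_1(d)$ to coincide in general.

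The principal obstacle is the regime $\alpha>\alpha_1(d)$: making the excursion decomposition rigorous while controlling \emph{all} $n^d$ sites simultaneously. Two quantities must be kept summable over the $n^d$ balls $B(x,r)$: the one-point error $\sum_x|\pr{\tau_x>\alpha t_*}-n^{-\alpha d}|$, which is exactly what dictates the precise form of $t_*$ in~\eqref{eq:deftstar}; and the family of mixing and harmonic-measure errors underlying the conditional-independence step — the entrance measures on the spheres $\partial B(x,r)$ must be uniformly close to harmonic measure, and the counts $K_x$ must concentrate uniformly in $x$. Balancing these against the competing small-probability events that would create local geometric structure inside $\U(\alpha t_*)$ is the technical core of the proof; the lower bound, by contrast, is comparatively routine once the sharp two-point estimate and the variance bound for $W$ are available.
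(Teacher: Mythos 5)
Your lower-bound argument (counting adjacent unvisited pairs, first moment of order $n^{d-2\alpha d/(1+p_d)}$ versus $n^{d-2\alpha d}$ for the i.i.d.\ law, then a second-moment concentration step) is essentially the paper's proof of the existence of $\alpha_0(d)=(1+p_d)/2$; note that your $c_d(1)=2g(0)/(g(0)+g(1))$ equals $2/(1+p_d)$ since $g(1)=p_d\,g(0)$. The only difference is that the paper avoids genuine three- and four-point estimates by restricting to a sparse grid of well-separated pairs and conditioning on excursion entrance/exit data, which makes the pair events conditionally independent and turns the variance bound into a one-line computation; your direct $\var(W)=o(\mu^2)$ route would also work but is harder than you suggest.

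The upper-bound regime has two genuine gaps. First, the coupling structure is not valid as stated: you cannot condition so that \emph{all} $n^d$ events $\{x\in\U(\alpha t_*)\}$ become independent, since the balls $B(x,2r)$ overlap for nearby $x$, and excluding the event ``some $2r$-ball contains two unvisited points'' does not make the joint conditional law of the indicators a product measure --- the non-membership events for points near a point of $\U$ remain correlated with its membership event, so the bound $\mathrm{TV}\le(\mathrm i)+(\mathrm{ii})+(\mathrm{iii})$ does not follow. This is exactly where the paper needs its inclusion--exclusion/Bonferroni expansion within each box, comparing $\E{\prod_{v\in S\cup W}Y_v\mid\F}$ with $n^{-\alpha d(|S|+\ell)}$ term by term and handling the non-separated sets $W\cup S\notin\SS$ separately via the two-point estimate (Claims~\ref{cl:bernoulli} and~\ref{cl:two}); that machinery, or a substitute for it, is the missing heart of the argument. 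Second, your annulus $B(x,2r)\setminus B(x,r)$ has aspect ratio $r/R=1/2$, so the conditional per-excursion hitting probability given entrance and exit points is only $\tfrac{C_d}{r^{d-2}}(1+O(1))$ by Harnack; after raising to the power $\asymp r^{d-2}\log n$ this gives $p_x$ only up to $n^{O(1)}$, not $n^{-\alpha d}(1+o(1))$. One needs two widely separated scales (the paper uses $n^{2\phi/\kappa}$ and $n^\phi$, optimized in Remark~\ref{rem:explanation}) so that the relative error $O(r/R)+O(1/r^2)+O(\delta)$ is $o(1/\log n)$. Relatedly, your bookkeeping for term (i) accounts only for the fluctuation of $K_x$ and not for these systematic hitting-probability errors, and summing the resulting one-point error over all $n^d$ sites (rather than over the $n^{d-d\beta}$ boxes, as the paper does) would force a relative precision $o(n^{-(1-\alpha)d})$ that these harmonic-measure estimates do not deliver; the sharp asymptotic $\pr{\tau_x>\alpha t_*}=n^{-\alpha d}(1+o(n^{-(1-\alpha)d}))$ you posit at the outset is not available and is not what the paper proves (Lemma~\ref{lem:hit} is only an upper bound up to constants).
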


In analogy with \cite{DPRZ_late}, we refer to the points in $\U(\alpha t_*)$ as ``$\alpha$-late'' for $X$. The reason for the terminology ``late'' is that the amount of time required by $X$ to hit them is much larger than the maximal hitting time.  Our definition of $\alpha$-late is slightly different than that given in~\cite{DPRZ_late} because we use $t_*$ instead of~$\tcov$.

Let $p_d$ be the probability that a simple random walk in $\Z^d$ starting from $0$ returns to $0$ before escaping to $\infty$.
The values of $\alpha_0(d)$ and $\alpha_1(d)$ from Theorem~\ref{thm:uniform} are explicitly given by
\begin{align*}
\alpha_0(d) = \frac{1+p_d}{2} \quad\text{and} \quad
\alpha_1(d) = \frac{(\kappa-2)d + d\kappa}{(\kappa-2) (d+1)+ d\kappa} \quad\text{where}\quad \kappa = d \wedge 6.
\end{align*}
The threshold $\alpha_0(d)$ is special because, as we show in Sections~\ref{sec:separated} and~\ref{sec:totalvar}, $\U(\alpha t_*)$ with high probability has neighbouring points for $\alpha \in (0,\alpha_0(d))$ but does not for $\alpha > \alpha_0(d)$.  In fact, for every $\alpha > \alpha_0(d)$ the distance between any pair of distinct points in $\U(\alpha t_*)$ is at least $n^{p_d}$ with high probability.  That is, the minimal distance between distinct points in $\U(\alpha t_*)$ jumps from $0$ to being larger than $n^{p_d}$ as $\alpha$ crosses the threshold $\alpha_0(d)$ with high probability.  We emphasize that $\alpha_0(d) > \tfrac{1}{2}$ for all $d \geq 3$ and $\alpha_0(d) \to \tfrac{1}{2}$ as $d \to \infty$.  The value $\tfrac{1}{2}$ is significant due to the connection between this work and \cite{MP_unif} described above.

Theorem~\ref{thm:uniform} describes the asymptotic behavior of the law of $\U(t)$ at a deterministic time $t$ of a specific form.  In our second main result, we describe the asymptotic behavior of $\U(\tau)$ where $\tau$ is the first time $t$ that $\U(t)$ contains a certain number of points.  More specifically, for each $\alpha > 0$, we let
\[
\tau_\alpha=\inf\{t\geq 0: |\U(t)|=n^{d-\alpha d}\}.
\]
We also let $\W_\alpha$ be a subset of $\Z_n^d$ picked uniformly at random among all subsets of $\Z_n^d$ containing exactly $n^{d-\alpha d}$ points. Then we have the following:

\begin{theorem}
\label{thm:exact}
Suppose that $d\geq 3$ and that $\alpha_0(d),\alpha_1(d) \in (0,1)$ are as in Theorem~\ref{thm:uniform}.  For all $\alpha \in (\alpha_1(d),\infty)$, we have
\begin{align}
\|\LL(\U(\tau_\alpha)) - \LL(\W_\alpha) \|_{\rm{TV}} &= o(1) \quad \text{as} \quad n\to \infty \label{eq:exact_a0}
\intertext{and for all $\alpha \in (0,\alpha_0(d))$ we have}
\|\LL(\U(\tau_\alpha)) - \LL(\W_\alpha) \|_{\rm{TV}} &= 1 - o(1) \quad \text{as} \quad n\to \infty. \label{eq:exact_a1}
\end{align}
\end{theorem}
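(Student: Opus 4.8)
We treat the two regimes separately. Throughout I write $k=n^{d-\alpha d}$ (a positive integer, after rounding), so $\LL(\W_\alpha)$ is the uniform law $\pi_k$ on size-$k$ subsets of $\Z_n^d$, and I note that $|\U(\tau_\alpha)|=k$ deterministically: $t\mapsto|\U(t)|$ decreases by exactly one at each step, so it takes the value $k$ on a single time interval. For $|A|=k$ this gives the elementary identity
\[
\{\U(\tau_\alpha)=A\}=\{\sigma_A<\rho_A\},\qquad \sigma_A:=\max_{x\notin A}\tau_x,\quad \rho_A:=\min_{y\in A}\tau_y ,
\]
since $\U$ equals $A$ at some time iff $X$ covers $A^c$ before hitting $A$, and then it does so exactly at $\tau_\alpha$. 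I will also use a monotone sandwich: taking $\beta_n<\alpha<\gamma_n$ with $\beta_n,\gamma_n\to\alpha$ slowly (e.g.\ $\beta_n=\alpha-1/\log n$, $\gamma_n=\alpha+1/\log n$), the means $n^{d(1-\beta_n)},n^{d(1-\gamma_n)}$ of $|\U(\beta_n t_*)|,|\U(\gamma_n t_*)|$ differ from $k$ by far more than $\sqrt k$, so by a second-moment bound $\tau_\alpha\in(\beta_n t_*,\gamma_n t_*)$ with high probability, whence $\U(\gamma_n t_*)\subseteq\U(\tau_\alpha)\subseteq\U(\beta_n t_*)$ with high probability.

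For $\alpha>\alpha_1(d)$, I would prove, \emph{uniformly} over a family $\mathcal G$ of ``good'' size-$k$ sets (well separated, plus whatever mild genericity the estimates underlying Theorem~\ref{thm:uniform} require), that $\pr{\U(\tau_\alpha)=A}=(1+o(1))\binom{n^d}{k}^{-1}$; combined with $\pi_k(\mathcal G)=1-o(1)$, $|\mathcal G|=(1+o(1))\binom{n^d}{k}$, $\pr{\U(\tau_\alpha)\in\mathcal G}=1-o(1)$, and $\sum_{A\in\mathcal G}\pr{\U(\tau_\alpha)=A}\to1$ (which pins the common value), this gives $\|\LL(\U(\tau_\alpha))-\pi_k\|_{\rm TV}=o(1)$. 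That $\U(\tau_\alpha)\in\mathcal G$ with high probability follows from the sandwich, since $\alpha>\alpha_1\geq\alpha_0$ makes $\U(\beta_n t_*)$ well separated with high probability (Sections~\ref{sec:separated}--\ref{sec:totalvar}) and subsets of well separated sets are well separated. To evaluate $\pr{\U(\tau_\alpha)=A}=\pr{\sigma_A<\rho_A}$ for $A\in\mathcal G$, I would compute the expected duration $\E{\int_0^\infty\1(\U(t)=A)\,dt}=\E{(\rho_A-\sigma_A)^+}$ in two ways. On one hand it equals $\int_0^\infty\pr{\U(t)=A}\,dt$; feeding in the precise one-time estimate behind Theorem~\ref{thm:uniform}, $\pr{\U(t)=A}=(1+o(1))\nu_{t/t_*,n}(\{A\})$ for $A\in\mathcal G$ and $t/t_*$ near $\alpha$ (with $\nu_{t/t_*,n}(\{A\})$ negligible otherwise), this is $(1+o(1))C_n$, where $C_n:=\int_0^\infty\nu_{t/t_*,n}(\{A\})\,dt=t_*\big(k\log(n^d)\binom{n^d}{k}\big)^{-1}$ depends on $A$ only through $|A|=k$. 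On the other hand $\sigma_A$ is a stopping time with $\{\sigma_A<\rho_A\}\in\F_{\sigma_A}$, so the strong Markov property gives $\E{(\rho_A-\sigma_A)^+}=\E{\1_{\sigma_A<\rho_A}\estart{\tau_A}{X(\sigma_A)}}$; by hitting-time estimates for $\Z_n^d$ (cf.\ \cite{IMBUZ_hit} and the bounds developed here), $\estart{\tau_A}{z}=(1+o(1))D_n$ uniformly over $A\in\mathcal G$ and starting points $z$ not atypically close to $A$, with $D_n=(1+o(1))\tmax/k$ depending only on $|A|=k$, while $X(\sigma_A)$ is not atypically close to $A$ with high probability because $\alpha>\tfrac12$. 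Equating the two expressions, and using $t_*=\tmax\log(n^d)(1+o(1))$ so that $C_n/D_n=(1+o(1))\binom{n^d}{k}^{-1}$, yields the claim.

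For $\alpha<\alpha_0(d)$, I would distinguish $\U(\tau_\alpha)$ from $\W_\alpha$ by the monotone functional $\Theta(\cdot)$ counting pairs of neighbouring points. Using the two-point estimate, and the fact that a nearest-neighbour pair in $\Z^d$ has capacity $\tfrac{2}{1+p_d}$ times that of a single site --- because the probability that simple random walk from $0$ ever visits a fixed neighbour equals the return probability $p_d$ --- one gets $\pr{\tau_x>s,\tau_y>s}=n^{-2(s/t_*)d/(1+p_d)+o(1)}$ for neighbours $x,y$, hence $\E{\Theta(\U(s))}=n^{d(1-2(s/t_*)/(1+p_d))+o(1)}$, and a second-moment bound (as in Sections~\ref{sec:separated}--\ref{sec:totalvar}) upgrades this to $\Theta(\U(s))=n^{d(1-2(s/t_*)/(1+p_d))+o(1)}$ with high probability at deterministic $s$ near $\alpha t_*$. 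Applying this at $s=\beta_n t_*,\gamma_n t_*$ and using the sandwich, $\Theta(\U(\tau_\alpha))=n^{d(1-2\alpha/(1+p_d))+o(1)}$ with high probability, with positive exponent since $\alpha<\alpha_0=(1+p_d)/2$. On the other hand $\E{\Theta(\W_\alpha)}=(1+o(1))d\,k^2/n^d=n^{d(1-2\alpha)+o(1)}$, so $\Theta(\W_\alpha)\leq n^{d(1-2\alpha)+o(1)}$ with high probability by Markov's inequality. Since $p_d>0$ gives $1-2\alpha/(1+p_d)>1-2\alpha$, for small fixed $\epsilon>0$ the event $\{\Theta>n^{d(1-2\alpha)+\epsilon}\}$ has probability $1-o(1)$ under $\LL(\U(\tau_\alpha))$ and $o(1)$ under $\LL(\W_\alpha)$, so $\|\LL(\U(\tau_\alpha))-\LL(\W_\alpha)\|_{\rm TV}=1-o(1)$.

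The main obstacle is the identity $\int_0^\infty\pr{\U(t)=A}\,dt=(1+o(1))C_n$ in the $\alpha>\alpha_1(d)$ case: making it uniform over $A\in\mathcal G$ requires the quantitative control of $\pr{\tau_x>t}$ and of the joint non-visiting probabilities that enters the proof of Theorem~\ref{thm:uniform}, not merely its total-variation conclusion, since the range of times contributing to the integral is wider than the scale on which $C_n$ concentrates by a factor of order $\sqrt k$ --- a black-box total-variation bound would have to beat $n^{-d(1-\alpha)/2}$ to suffice. The remaining ingredients --- the hitting-time asymptotics $\estart{\tau_A}{z}=(1+o(1))\tmax/k$ for well separated $A$, uniform in $A$ and in admissible $z$; the control of $X(\sigma_A)$; and the second moment of $\Theta$ --- are of the same nature as the analysis already carried out in the paper.
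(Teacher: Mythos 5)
Your treatment of the regime $\alpha<\alpha_0(d)$ is essentially the paper's argument (bound $\tau_\alpha\leq(\alpha+\epsilon)t_*$ by Markov's inequality, use monotonicity of the neighbour-pair count, and run the second-moment argument over a grid of well-separated neighbour pairs as in the proof of Theorem~\ref{thm:uniform} Part~II); that half is fine. For $\alpha>\alpha_1(d)$ your route is genuinely different from the paper's. The paper fixes $t_1=(\alpha-\epsilon)t_*$, invokes Theorem~\ref{thm:uniform} once to couple $\U(t_1)$ with a Bernoulli set $V$, and then uses Lemma~\ref{lem:oneone} --- for well-separated $A$ the harmonic measure of $A$ seen from a far-away point is $\tfrac{1}{|A|}+O(|A|n^{-\gamma(d-2)(1-\eta)})$ --- to couple the subsequent point-by-point decimation of $\U(t)$ with uniform random deletion from $V$, summing the per-step error over the $\lesssim n^{d-\alpha d+\epsilon}$ deletions. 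This needs only the total-variation conclusion at a single time plus a hitting-\emph{distribution} estimate. You instead aim at the pointwise asymptotics $\pr{\U(\tau_\alpha)=A}=(1+o(1))\binom{n^d}{k}^{-1}$ via the occupation-time identity, which is closer in spirit to \cite{ImbuzPrata, Prata_thesis}; the arithmetic is self-consistent ($C_n/D_n$ does reduce to $(1+o(1))\binom{n^d}{k}^{-1}$), but two steps are genuine gaps.

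First, as you yourself flag, the identity $\int_0^\infty\pr{\U(t)=A}\,dt=(1+o(1))C_n$ requires $\pr{\U(t)=A}=(1+o(1))\nu_{t/t_*,n}(\{A\})$ uniformly over a window of times of width $\asymp t_*/(\sqrt{k}\log n)$ around $\alpha t_*$, together with tail control outside it; this is a per-configuration estimate that the TV statement of Theorem~\ref{thm:uniform} does not provide, and you do not derive it (it would have to be extracted from the Bonferroni computation of Section~\ref{sec:totalvar}, re-run at a continuum of times). Second, and unacknowledged: replacing $\estart{\tau_A}{X(\sigma_A)}$ by $D_n=(1+o(1))\tmax/k$ inside $\E{\1(\sigma_A<\rho_A)\estart{\tau_A}{X(\sigma_A)}}$ concerns the walk's position \emph{on} the rare event $\{\sigma_A<\rho_A\}$, of probability $\asymp\binom{n^d}{k}^{-1}$. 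Since $\estart{\tau_A}{z}$ can be as large as $\asymp\tmax=kD_n$ when $z$ is close to $A$, after summing over $A$ the contribution of the bad positions to the total variation distance is $\asymp k$ times the probability that the penultimate uncovered configuration (of size $k+1$) is not well separated. An unconditional ``with high probability'' is therefore not enough: you need this failure probability to be $o(1/k)=o(n^{-d(1-\alpha)})$, which via Lemma~\ref{lem:yhit} forces $\gamma<3\alpha-2$, and this must then be reconciled with the competing requirement that $\gamma$ be large enough for $\estart{\tau_A}{z}=(1+o(1))\tmax/k$ to hold with $o(1)$ relative error when $|A|=k=n^{d(1-\alpha)}$. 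Your stated justification (``because $\alpha>\tfrac12$'') is not the relevant condition and does not address this quantitative point. The approach looks salvageable for $\alpha>\alpha_1(d)$, but as written these two steps are missing, and the paper's decimation coupling avoids both.
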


We will derive Theorem~\ref{thm:exact} from Theorem~\ref{thm:uniform} using an estimate which gives that the first hitting distribution of $X$ on $A \subseteq \Z_n^d$, where $A$ is a set of points which is ``well-separated,"  is closely approximated by the uniform distribution on $A$.

A number of questions naturally arise from this work (exact values where the transitions from non-uniformity to uniformity occur, existence of a phase transition, behaviour for $\alpha \in (0,\alpha_0(d))$, other graphs, etc...) which we state more carefully in Section~\ref{sec:questions}.

\subsection{Relation to other work}
\label{subsec:previous work}

The structure of $\U(\alpha \tcov)$ for $d=2$ was also studied in the physics literature by \cite{BH} and later in the mathematics literature by \cite{DPRZ_late}.  In contrast to the case that $d \geq 3$, $\U(\alpha \tcov)$ for $d=2$ is not uniform for any $\alpha \in (0,1)$.  In particular, the last visited set tends to organize itself into clusters which are of diameter up to $n^\beta$ where $\beta=\beta(\alpha) > 0$ for any $\alpha \in (0,1)$.  The reason for the difference is that random walk for $d=2$ is recurrent which leads to longer range correlations while for $d \geq 3$ it is transient.  Thus the process of coverage in the two regimes is very different.  The work \cite{DPRZ_late} is part of a larger series which also includes \cite{DPRZ_thick, DPRZ_bm_manifold, DPRZ_cov} and the proofs of Theorems~\ref{thm:uniform} and~\ref{thm:exact} employ several techniques which are present in the articles of this series.

\subsection{Notation and assumptions}
\label{subsec:notation_and_assumptions}

Throughout this article, we shall always assume that $d \geq 3$ unless explicitly stated otherwise.  For functions $f,g$ we will write $f(n) \lesssim g(n)$ if there exists a constant $c > 0$ such that $f(n) \leq c g(n)$ for all $n$.  We write $f(n) \gtrsim g(n)$ if $g(n) \lesssim f(n)$.  Finally, we write $f(n) \asymp g(n)$ if both $f(n) \lesssim g(n)$ and $f(n) \gtrsim g(n)$.  Many of the proofs will involve a number of different constants which we will often indicate simply by $c$.
We write $\mathbb{P}$ without the subscript $\pi$ to indicate the law of a simple random walk in~$\Z_n^d$ started from stationarity. We will also write $\mathbb{P}_x$ to indicate the law of the random walk when started from~$x$.  We denote by $\mathbb{E}$ and $\mathbb{E}_x$ the corresponding expectations.

\subsection{Strategy}
\label{subsec:strategy}

The proofs of Theorems~\ref{thm:uniform} and~\ref{thm:exact} require many different estimates.  We now provide an overview of the different steps and how they fit together.  Throughout, we assume that we have fixed some value of $\alpha \in (0,1)$ and $d \geq 3$.

\smallskip

\begin{figure}[ht!]
\begin{center}
\includegraphics[scale=0.85,page=1]{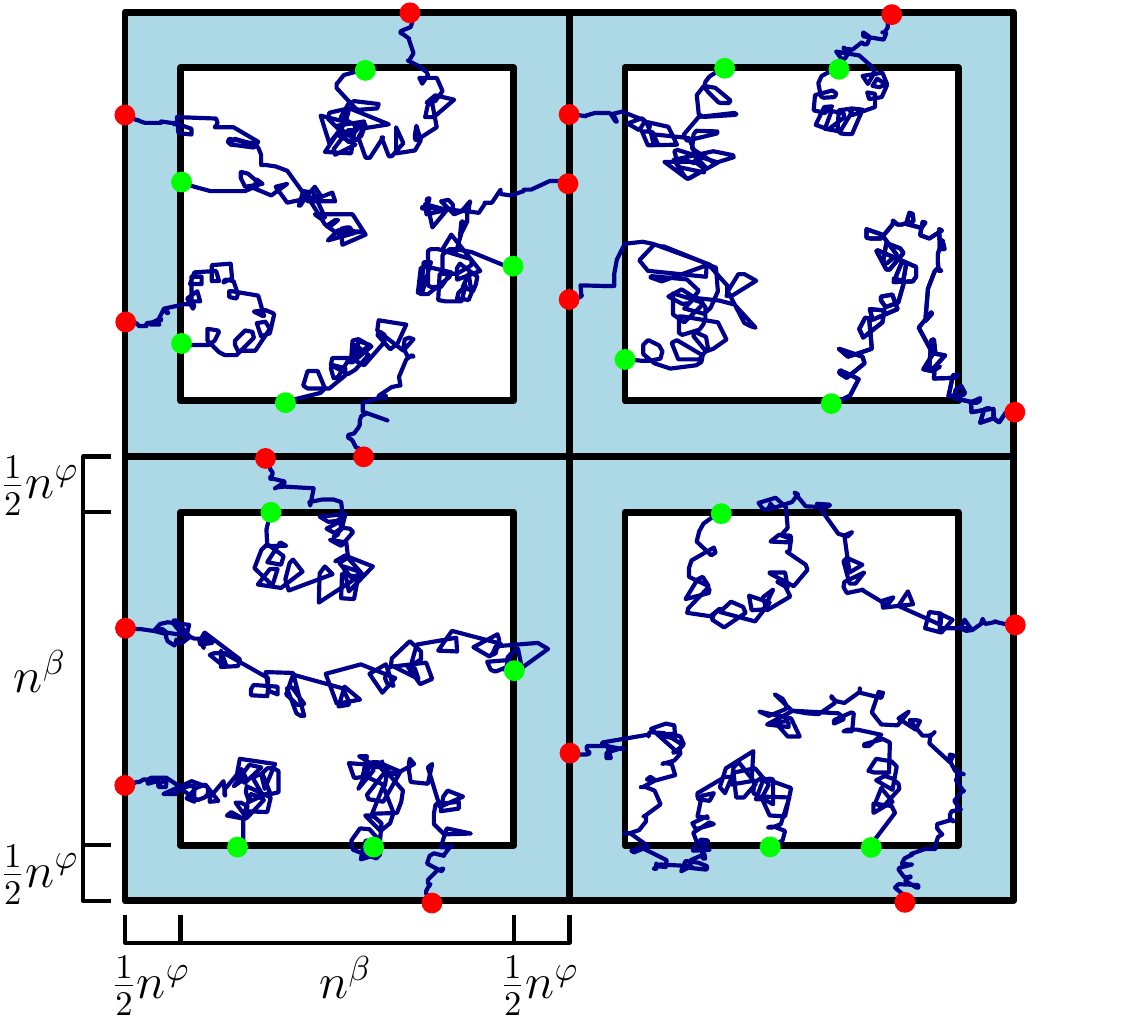}
\end{center}
\caption{\label{fig:spatial_decomposition} Four boxes of side length $n^\beta + n^\phi$ in the spatial decomposition of $\Z_n^d$ used in the proofs of Theorems~\ref{thm:uniform} and~\ref{thm:exact} are illustrated above.  The white inner boxes represent the concentric boxes of side length $n^\beta$.  We denote by $\SS_\beta$ the collection of all such white boxes and for each $S \in \SS_\beta$ we let $\ol{S}$ (resp.\ $\ul{S}$) be the concentric box of side length $n^\beta + n^\phi$ (resp.\ $n^\beta - n^\phi$) which contains it (resp.\ contained in it).  For $\alpha > (d+\phi)/(d+1)$, with high probability there are no unvisited points in $\AA = \Z_n^d \setminus \cup_{S \in \SS_\beta} \ul{S}$.  In the setting of the modified version of the problem described in Step 1 in Section~\ref{subsec:strategy}, conditional on the entrance and exit points of the excursions that $X$ makes between the boundaries of the boxes in $S \in \SS_\beta$ and $\ol{S}$, the sets of unvisited points in the different $\ul{S}$ for $S \in \SS_\beta$ are independent.  Shown are a few such excursions in dark blue.  The entrance (resp.\ exit) points are indicated by green (resp.\ red) disks.  These are just a caricature; in the proofs~$\phi$ is taken to be much smaller than $\beta$ so most of the excursions are in fact very short and end very close to where they start.}
\end{figure}

\noindent\textbf{Spatial decomposition:}
We fix two small parameters $\epsilon, \phi \in (0,1)$ and let $\beta = \alpha-\epsilon$.  We then partition $\Z_n^d$ into disjoint boxes of side length $n^\beta+n^\phi$ and consider in each such box concentric sub-boxes of side lengths $n^\beta-n^\phi$ and $n^\beta$ (see Figure~\ref{fig:spatial_decomposition}).  We let $\SS_\beta$ denote the collection of the latter type of concentric boxes and for each $S \in \SS_\beta$ we let $\ol{S}$ (resp.\ $\ul{S}$) be the box with side length $n^\beta+n^\phi$ (resp.\ $n^\beta-n^\phi$) which contains it (resp.\ is contained in it).  We also let $\AA = \Z_n^d \setminus \cup_{S \in \SS_\beta} \ul{S}$ be the region between the outside and inside boxes.  Note that $|\AA| \asymp n^{d-d\beta} \times n^{(d-1)\beta + \phi} = n^{d-\beta+\phi}$.  The probability that a given point is not visited at time $\alpha t_*$ is $n^{-\alpha d(1+o(1))}$; this follows from the proof of \cite[Theorem~4.1]{PR_lamp} using \cite{ALD_thresh} as mentioned earlier and the vertex transitivity of $\Z_n^d$ (we will also give a more precise version of this result which is specific to $\Z_n^d$). Consequently, for $\alpha > (d+\phi)/(d+1)$ we can choose $\epsilon > 0$ small enough so that we have $\AA \cap \U(\alpha t_*) = \emptyset$ with high probability.  Therefore it suffices to prove the uniformity of the last visited points which are contained in $\cup_{S \in \SS_\beta} \ul{S}$.  This leads us to consider the following modified version of the problem.  We let $\wt{\U}(\alpha t_*)$ consist of those points in each box $\ul{S}$ for $S \in \SS_\beta$ which have not been visited by the first time that the number of excursions made by $X$ from $\partial S$ to $\partial \ol{S}$ by time~$\alpha t_*$ exceeds the typical number~$E$.  We show that we have sufficiently good concentration for the number of such excursions up to a given time so that $\U(\alpha t_*) = \wt{\U}(\alpha t_*)$ with high probability.  We then prove the uniformity of $\wt{\U}(\alpha t_*)$.  This modified problem is useful to consider because the random variables $(\wt{\U}(\alpha t_*) \cap \ul{S})_{S \in \SS_\beta}$ are independent conditional on the $\sigma$-algebra $\F$ generated by the entrance and exit points of these excursions.  Thus to bound the total variation distance between $\LL(\wt{\U}(\alpha t_*))$ and $\nu_{\alpha,n}$ it suffices to bound the expectation of the sum of the total variation distances between the conditional laws of the last visited set in each $\ul{S}$ for $S \in \SS_\beta$ given $\F$ and a random subset of $\ul{S}$ where points are included independently with probability~$n^{-\alpha d}$ (explained below).  

\smallskip

\begin{figure}[ht!]
\begin{center}
\includegraphics[scale=0.85]{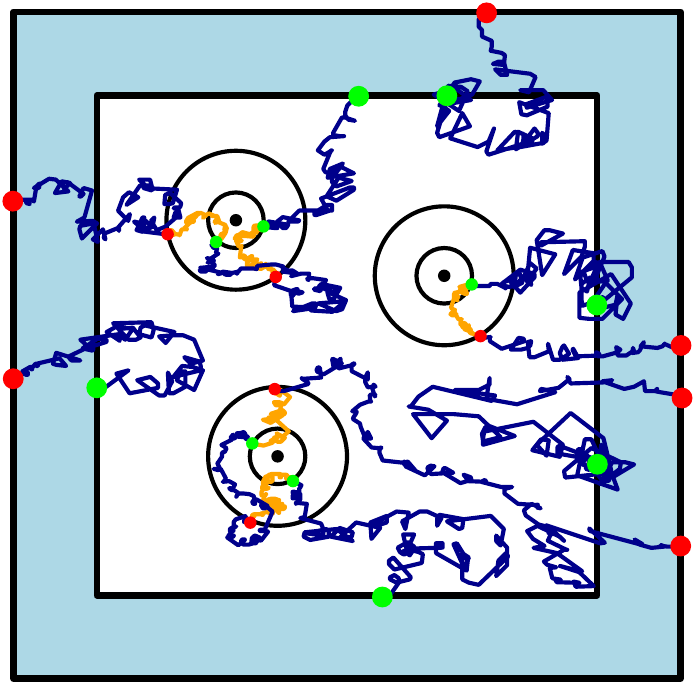}
\end{center}
\caption{\label{fig:box_separated} (Continuation of Figure~\ref{fig:spatial_decomposition}) A single box $\ol{S}$ of side length $n^\beta + n^\phi$ is shown along with the corresponding concentric box $S \in \SS_\beta$ with side length $n^\beta$.  Inside $S$, three points are shown and around each point we have placed two concentric balls.  Conditional on the number and entrance and exit points of the excursions  (illustrated in orange above) that $X$ makes across each of these spherical annuli during a given number of excursions across $\ol{S} \setminus S$, the events that each of the points are hit is independent.}
\end{figure}

\noindent{\bf Uniformity in each box:}
Our strategy for proving the uniformity of $\wt{\U}(\alpha t_*) \cap \ul{S}$ for a given $S \in \SS_\beta$ is based on the same high level idea used  in \cite{ImbuzPrata, Prata_thesis} (inclusion-exclusion and the Bonferroni inequalities) though the implementation is different.  The first step is to show that for each $\epsilon > 0$ there exists $M <\infty$ so that with high probability $\max_{S \in \SS_\beta} |\wt{\U}(\alpha t_*) \cap \ul{S}| \leq M$.  We also show that with high probability $\wt{\U}(\alpha t_*) \cap \ul{S}$ is ``well-separated'' in the sense that for some choice of $\gamma > 0$, the distance between any two distinct points $x,y \in \wt{\U}(\alpha t_*) \cap \ul{S}$ is at least $n^\gamma$. Thus to bound the total variation distance, we can restrict our attention to finite, well-separated sets.  To complete the proof, we need very precise hitting estimates in order to determine the probability that any given such set $S \subseteq \ul{S}$ for $S \in \SS_\beta$ is not visited by~$X$ during its first~$E$ excursions from $\partial S$ to $\partial \ol{S}$.  This needs to be sufficiently precise so that we can sum the error over all possible well-separated subsets of~$\ul{S}$ of size~$M$ and then sum that error over all of the boxes in $\SS_\beta$.  To accomplish this, we put spherical annuli (see Figure~\ref{fig:box_separated}) around each of the points in $S$ with in-radius $n^{2\phi/\kappa}$ for $\kappa = d \wedge 6$ and out-radius $n^\phi$ (the sizes and the value of $\phi$ are chosen to optimize several error terms).  Conditional on the number of excursions $N$ that $X$ makes across each such spherical annulus and their entrance and exit points as well as the corresponding data for the first $E$ excursions from $\partial S$ to $\partial \ol{S}$, the probability that each point is hit is independent.  Another concentration estimate implies that $N$ is with high probability very close to the typical number made by $X$ by time $\alpha t_*$, so we can replace it with this deterministic value.  Moreover, estimates for discrete harmonic functions \cite{LawLim} give us that the probability that a given excursion hits a point does not depend strongly on its entrance and exit points.  Putting everything together finishes this step.

\smallskip

\noindent{\bf Non-uniformity for small $\alpha$:}
The next step in the proof of Theorem~\ref{thm:uniform} is to establish the existence of $\alpha_0(d)$, i.e., that for small values of $\alpha$ the total variation distance between the law of $\U(\alpha t_*)$ and~$(Z_x)$ tends to $1$ as $n \to \infty$.  The idea is to show that for sufficiently small values of $\alpha$, the number of unvisited points which have an unvisited neighbour is much larger for $\U(\alpha t_*)$ than for $(Z_x)$.

\smallskip

\noindent{\bf Uniformity of $\U(\tau_\alpha)$:}
The final step is to deduce Theorem~\ref{thm:exact} from Theorem~\ref{thm:uniform}.  The main idea is to show that for any well-separated collection of points $A$, the first exit distribution of $X$ from~$\Z_n^d \setminus A$ is close to the uniform measure on $A$ provided $X$ starts sufficiently far from $A$.   By Theorem~\ref{thm:uniform}, if we fix $\epsilon > 0$ very small and run $X$ until time $(\alpha-\epsilon) t_*$ then we know that $\U((\alpha-\epsilon)t_*)$ is close in law to a random subset of~$\Z_n^d$ where points are included independently with probability $n^{-(\alpha-\epsilon)d}$.  Using the aforementioned estimate, for $t \geq (\alpha-\epsilon)t_*$ the random walk~$X$  decimates $\U(t)$ by removing points one by one uniformly at random.  The estimate for the uniformity of the first exit distribution is good enough that we can sum the error over the $\asymp n^{d-(\alpha-\epsilon)d}$ points necessary to remove until the last visited set has size exactly $n^{d-\alpha d}$ provided we choose $\epsilon > 0$ small enough.

\subsection{Outline}
\label{subsec:outline}

The remainder of this article is structured as follows.  In Section~\ref{sec:excursions}, we establish several concentration estimates for the number of excursions that~$X$ makes across annuli of different widths.  Next, in Section~\ref{sec:hitting} we establish a number of estimates related to the probability that an excursion of~$X$ hits points.  The purpose of Section~\ref{sec:separated} is to prove some preliminary results on the structure of the last visited set.  In particular, we show that the points which have not been visited by time~$\alpha t_*$
for large enough values of $\alpha$ are typically far from each other.  In Section~\ref{sec:totalvar}, we complete the proof of Theorem~\ref{thm:uniform} and in Section~\ref{sec:exact} we derive Theorem~\ref{thm:exact} from Theorem~\ref{thm:uniform}.  Finally, in Section~\ref{sec:questions} we list a number of open questions which naturally arise from this work.

\section{Excursions}\label{sec:excursions}

Let $r<R$. We write $\SS(x,r)$ for the box centered at $x$ of side length~$r$ and $\B(x,r)$ for the closed Euclidean ball centered at $x$ of radius $r$.  For sets $E(x,r) = \B(x,r) \text{ or } \SS(x,r)$ and $F(x,R) = \B(x,R) \text{ or } \B(x,R)$ with $E(x,r)\subseteq F(x,R)$ we define a sequence of stopping times
\begin{align*}
\tau_0=\inf\{t\geq 0: X(t) \in \partial E(x,r)\}, \\
\sigma_0 = \inf\{t\geq \tau_0: X(t)\notin F(x,R)\}
\end{align*}
and inductively we set 
\begin{align*}
\tau_{k+1} = \inf\{t\geq \sigma_{k}: X(t) \in \partial E(x,r)\} \\
\sigma_{k+1} = \inf\{ t\geq \tau_{k+1}: X(t) \notin F(x,R)\},
\end{align*}
where $E$ and $F$ will be understood from the context.

\begin{definition}\rm{
We call a path of the random walk trajectory an {\it{excursion}} if it starts from $F(x,R)$ and it comes back to $\partial F(x,R)$ after hitting $E(x,r)$.
}\end{definition}

We now define $\nsq_x(r,R,t)$ to be the total number of excursions across the annulus $\B(x,R)\setminus \SS(x,r)$ before time~$t$. More formally for $E(x,r)=\SS(x,r)$ and $F(x,R) = \B(x,R)$ we let
\[
\nsq_x(r,R,t) = \min\left\{k\geq 0: \sum_{i=1}^{k}(\sigma_i-\sigma_{i-1}) +(\sigma_0 -\tau_0)\geq  t\right\}.
\]
Similarly we define $\nsqsq_x(r,R,t)$ for the number of excursions in the annulus $\SS(x,R)\setminus \SS(x,r)$ before time $t$ and finally
$\nbb_x(r,R,t)$ for the excursions across $\B(x,R)\setminus\B(x,r)$ before time~$t$.

\begin{lemma}\label{lem:coupling}
Let $R \geq 10r$ and let $Y_j$ be the exit point of the $j$-th excursion across $\ball{R}\setminus\ball{r}$ or across $\ball{R}\setminus\SS(0,r)$. Then $(Y_j)_j$ is a finite state space Markov chain with a stationary distribution $\til{\pi}$ and mixing time of order $1$, i.e.\ there exists $k_0<\infty$ such that $\tmix = k_0$. 
Fix $\psi>0$ and let $N=k_0n^\psi$. Then there exists a positive constant $c$ such that for all~$m$ we have
\[
\|\LL(Y_N,\ldots, Y_{mN}) -\til{\pi}^{\otimes m}\|_{\rm{TV}} \lesssim me^{-cN}.
\]
\end{lemma}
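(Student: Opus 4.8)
The plan is to prove the statement in three steps: (a)~$(Y_j)_j$ is a time-homogeneous Markov chain on a finite state space; (b)~its one-step kernel satisfies a Doeblin-type minorization with a dimensional constant, which yields a unique stationary law $\til\pi$, aperiodicity, and $\tmix \leq k_0 = k_0(d)$; and (c)~$\tmix = O(1)$ upgrades, by a routine telescoping argument, to the claimed estimate for widely spaced samples. For (a): by construction the exit point $Y_j$ of the $j$-th excursion lies in the finite set of vertices just outside $\ball{R}$ (cardinality $\asymp R^{d-1}$), and conditionally on $Y_j = x$ the continuation of $X$ after the corresponding exit time is, by the strong Markov property, a fresh walk from $x$ independent of $Y_1,\dots,Y_j$; since $Y_{j+1}$ is obtained from this continuation by a time-independent rule (run it until it hits the inner set $\partial E(0,r)$, then until it exits $\ball{R}$ --- both of which occur a.s., as $\Z_n^d$ is finite and the walk is irreducible), $(Y_j)$ is a time-homogeneous Markov chain. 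Throughout I tacitly assume $R < n/2$ (implicit in the statement) so that $\ball{R}$ is a genuine Euclidean ball and its exit distribution agrees with that of the walk on $\Z^d$.

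The core of the argument is (b). Write $p(x,\cdot)$ for the one-step kernel of $(Y_j)$. Conditioning on the first entrance point $W \in \partial E(0,r)$ of the inner set, the strong Markov property gives $p(x,y') = \estart{\omega_W(y')}{x}$ for $y'$ just outside $\ball{R}$, where $\omega_z$ denotes the exit (harmonic) measure of $\ball{R}$ from $z$. Since $R \geq 10r$, the inner set $E(0,r)$ --- whether it is $\ball{r}$ or the box $\SS(0,r)$ --- is contained in a ball of radius $\asymp r$ about the origin and therefore lies well inside the bulk of $\ball{R}$; the discrete Harnack inequality together with the standard estimates for the harmonic measure of a ball (\cite{LawLim}) then provide $c_0 = c_0(d) > 0$ with $\omega_z(y') \geq c_0\,\bar\omega(y')$ for every such $z$ and every $y'$, where $\bar\omega := \omega_0$. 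Averaging over $W$ gives the Doeblin bound $p(x,\cdot) \geq c_0\,\bar\omega(\cdot)$ uniformly in $x$, in $n$, and in $R \geq 10r$. Since $\bar\omega$ has full support this already shows $(Y_j)$ is irreducible and aperiodic; the standard coupling for the Doeblin condition (\cite{LPW}) then gives a unique stationary law $\til\pi$ together with $d(k) := \max_x\|p^k(x,\cdot)-\til\pi\|_{\rm TV} \leq (1-c_0)^k$, whence $\tmix \leq k_0 := \lceil(\log 4)/\log(1/(1-c_0))\rceil = k_0(d)$.

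For (c), set $N = k_0 n^\psi$ and $\delta := d(N) \leq (1-c_0)^N \leq e^{-cN}$ with $c := -\log(1-c_0) > 0$. The sequence $(Y_N,Y_{2N},\dots)$ is itself a Markov chain, with kernel $p^N$ and stationary law $\til\pi$. Let $f(m) := \sup_x \|\LL_x(Y_N,\dots,Y_{mN}) - \til\pi^{\otimes m}\|_{\rm TV}$, the supremum over initial states. The elementary chain-rule inequality $\|\LL(A,B) - \mu\otimes\nu\|_{\rm TV} \leq \|\LL(A)-\mu\|_{\rm TV} + \E{\|\LL(B\mid A)-\nu\|_{\rm TV}}$ (expectation over $A$), applied with $A = Y_N$, $B = (Y_{2N},\dots,Y_{mN})$, $\mu = \til\pi$, $\nu = \til\pi^{\otimes(m-1)}$, and combined with homogeneity (so that $\LL_x(B\mid Y_N = x') = \LL_{x'}(Y_N,\dots,Y_{(m-1)N})$), gives $f(m) \leq \delta + f(m-1)$; since $f(1) = \delta$, induction yields $f(m) \leq m\delta \leq m e^{-cN}$. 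As the actual initial law of $(Y_j)$ is a mixture of point masses and total variation distance is convex, $\|\LL(Y_N,\dots,Y_{mN}) - \til\pi^{\otimes m}\|_{\rm TV} \leq f(m) \lesssim m e^{-cN}$, which is the claim.

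The only step that is more than bookkeeping is the uniform minorization $\omega_z(\cdot) \gtrsim \bar\omega(\cdot)$ for $z$ in the bulk of $\ball{R}$, with a constant depending only on $d$: it is a Harnack-type comparison of the harmonic measure of a discrete ball seen from different deep interior points, which is classical, but one must check that the hypotheses hold uniformly in $n$ and $R$. In particular one should verify that $R \geq 10r$ --- possibly enlarging the constant $10$ to a dimensional one, which is harmless since $R/r$ is a growing power of $n$ in all applications --- genuinely places $E(0,r)$ in the bulk, and that the regime of bounded $R$ is covered trivially by finiteness of the state space. Everything else follows routinely.
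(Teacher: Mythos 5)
Your proposal is correct and follows essentially the same route as the paper: the key step in both is Harnack's inequality applied to the harmonic function $z \mapsto \prstart{X(\tau_R)=y'}{z}$ for $z$ in the inner set, which yields a uniform (Doeblin-type) comparison of the one-step kernels $\nu_x$ of the exit-point chain and hence a mixing time bounded by a constant depending only on $d$. The only cosmetic difference is in the final multi-sample step, where you telescope the total variation chain rule, while the paper expands the product of conditional ratios $\mu(y_j\mid y_1,\dots,y_{j-1})/\til{\pi}(y_j)=1+O(e^{-cN})$ via the uniform decay estimate of Proposition~\ref{prop:mixing_decay}; both yield the same $m e^{-cN}$ bound.
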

\begin{proof}
See Appendix~\ref{app:elementary}.
\end{proof}

\begin{definition}\label{def:trR}\rm{
For $R\geq 10r$ we let 
\[
\trs_{r,R} = \estart{\sigma_1 - \sigma_0}{\til{\pi}},
\]
i.e.\ $\trs_{r,R}$ is the expected length of the excursion
when the walk is started on $\partial\ball{R}$ according to the stationary distribution $\til{\pi}$ of the exit points of the excursions across the annulus $\B(0,R)\setminus\SS(0,r)$ as given in Lemma~\ref{lem:coupling}. 
We define~$\trb_{r,R}$ similarly except that the excursions are across the annulus $\B(0,R)\setminus\B(0,r)$. 
}
\end{definition}

\begin{lemma}\label{lem:n1}
For each $\psi\in (0,1/2)$ there exists $n_0\geq 1$ and a positive constant $c$ such that for all $n\geq n_0$ the following is true. Suppose that $n/4\geq R\geq 10r$ and $t\asymp n^d\log n$. Then for all~$\delta>0$ such that~$\delta r^{d-2} n^{-\psi-1/2} \leq 1$ and~$\delta n^{\psi} \leq 1$ we have that for all $x$
\[
\pr{\nsq_x(r,R,t) \notin [A,A']} \lesssim  n^\psi e^{-c\delta^2 r^{d-2}/n^\psi} + e^{-cn^\psi},
\]
where $A=t/((1+\delta) \trs_{r,R})$ and $A'=t/((1-\delta) \trs_{r,R})$.
\end{lemma}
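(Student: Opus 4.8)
The plan is to reduce the estimate to a concentration bound for the total time spent in a \emph{deterministic} number of excursions, and then to prove that concentration by grouping the excursions into blocks of size $N=k_0n^\psi$ and exploiting the near-independence of such blocks furnished by Lemma~\ref{lem:coupling}. By vertex transitivity of $\Z_n^d$ we may take $x=0$. Since $\sum_{i=1}^k(\sigma_i-\sigma_{i-1})+(\sigma_0-\tau_0)=\sigma_k-\tau_0$, the quantity $\nsq_0(r,R,t)$ is the first $k\geq 0$ with $\sigma_k-\tau_0\geq t$, so $\{\nsq_0(r,R,t)\leq k\}=\{\sigma_k-\tau_0\geq t\}$ for every $k$, and hence
\[
\pr{\nsq_0(r,R,t)<A}=\pr{\sigma_{\lceil A\rceil-1}-\tau_0\geq t},\qquad \pr{\nsq_0(r,R,t)>A'}=\pr{\sigma_{\lfloor A'\rfloor}-\tau_0<t}.
\]
Writing $W_M:=\sigma_M-\tau_0$ and $L_j:=\sigma_j-\sigma_{j-1}$, it suffices to show $\E{W_M}=M\,\trs_{r,R}\,(1+o(1))$ and that $W_M$ concentrates on the scale $\delta M\,\trs_{r,R}$; applying this with $M=\lceil A\rceil-1$ (so $\E{W_M}\approx t/(1+\delta)<t$, an upper-tail event) and with $M=\lfloor A'\rfloor$ (so $\E{W_M}\approx t/(1-\delta)>t$, a lower-tail event), and noting that $|t-\E{W_M}|\asymp\delta t$ in both cases, will give the lemma. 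Throughout I may assume $\delta^2 r^{d-2}\geq n^\psi$, since otherwise the asserted bound already exceeds $n^\psi e^{-c}\geq 1$ once $n$ is large.

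The crucial inputs are sharp estimates for a single excursion. Using Green's function and hitting-time bounds on $\Z_n^d$ for $d\geq 3$ — the expected time to hit $\SS(0,r)$ from any point is of order $\mu:=n^d/r^{d-2}$, and the part of an excursion lying inside $\B(0,R)$ takes time of order $R^2\lesssim\mu$ because $R\leq n/4$ — I will establish, uniformly over $y\in\partial\B(0,R)$,
\[
\estart{L_1}{y}\asymp\mu\qquad\text{and}\qquad\prstart{L_1>s\mu}{y}\lesssim e^{-cs}\quad(s\geq 1),
\]
the tail bound following from the mean bound by iterating the Markov property; in particular $\trs_{r,R}\asymp\mu$, and the analogous statements hold for $\sigma_0-\tau_0$ and $\tau_0$. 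Since, conditionally on the exit points $(Y_j)$, the $L_j$ are independent with $\E{L_j\mid(Y_i)_i}$ a function of $Y_{j-1}$ that is $\asymp\mu$, Lemma~\ref{lem:coupling} (fast mixing of the exit-point chain) together with the decomposition of $L_j$ into an additive functional of that chain plus a martingale increment shows that a block $B:=\sum_{j=1}^N L_j$ of $N$ consecutive excursions, started from any distribution on $\partial\B(0,R)$ within total variation $e^{-cN}$ of $\til\pi$, is sub-gamma with variance parameter $\asymp N\mu^2$ and scale parameter $\asymp\mu$, with mean $N\trs_{r,R}(1+o(1))$; started from $\til\pi$, its mean is exactly $N\trs_{r,R}$.

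With these estimates in hand I will split the first $M$ excursions into an initial block of $N=k_0n^\psi$ excursions, then $m:=\lfloor M/N\rfloor-1$ further blocks of $N$ excursions, and a remainder block of fewer than $N$ excursions; here $M\asymp t/\trs_{r,R}\asymp r^{d-2}\log n$ and $m\asymp r^{d-2}(\log n)/n^\psi$. The exit points at the block boundaries form a subsequence of $(Y_j)$ sampled every $N$ steps, so by Lemma~\ref{lem:coupling} they are within total variation $\lesssim m e^{-cN}$ of i.i.d.\ samples from $\til\pi$; since moreover the exit-point chain has a state space of polynomial size (of order $R^{d-1}$) whose $N$-step transition kernel is within total variation $e^{-cN}$ of $\til\pi$, a minorization argument lets me resample the walk's position afresh at each block boundary, independently of the past, outside an event of probability $\lesssim m\,(n^{d-1}\vee 1)\,e^{-cN}\lesssim e^{-cn^\psi}$. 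On the complement of that event the $m$ interior block sums and the remainder block sum are genuinely independent, each distributed as $B$ above, so $\E{W_M}=M\,\trs_{r,R}+O(\mu)$, which since $\delta^2 r^{d-2}\geq n^\psi$ forces $\delta t\asymp\delta M\mu\gg\mu$ gives $\E{W_M}=M\,\trs_{r,R}(1+o(1))$ with $|t-\E{W_M}|\gtrsim\delta t$ of the correct sign in each case. Applying Bernstein's inequality for sums of independent sub-gamma variables to these block sums, with total variance $\lesssim M\mu^2$ and scale $\asymp\mu$, a deviation of order $\delta M\mu$ has probability $\lesssim\exp(-c\min(\delta^2 M,\delta M))=\exp(-c\delta^2 M)$ since $\delta<1$ (this being guaranteed by $\delta n^\psi\leq 1$); combining with the coupling error $e^{-cn^\psi}$ and using $\delta^2 M\asymp\delta^2 r^{d-2}\log n\geq c\,\delta^2 r^{d-2}/n^\psi$ completes the proof.

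The main obstacle is the single-excursion estimate: obtaining $\estart{L_1}{y}\asymp n^d/r^{d-2}$ \emph{uniformly} over starting points $y\in\partial\B(0,R)$ on the torus (rather than only for a typical start), and promoting it to a uniform sub-exponential tail — it is precisely this uniformity that controls both the mean and the fluctuations of each excursion and feeds into every subsequent step. A secondary technical point is the decoupling: Lemma~\ref{lem:coupling} only gives that the exit points observed every $N$ steps are nearly i.i.d., and one must use the polynomial finiteness of the exit-point state space together with the strong Markov property to upgrade this to genuine independence of the \emph{blocks of excursion lengths}, not merely near-independence of their endpoints.
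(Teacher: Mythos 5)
Your proposal is correct in substance and reaches a (slightly stronger) version of the stated bound, but it takes a genuinely different route through the decoupling step, which is the heart of the argument. The paper handles the dependence between excursions by a residue-class union bound: it writes $\pr{\sum_{i=N}^{A}V_i>t'}\leq N\max_x\prstart{\sum_i V_{iN}>t'/N}{x}$, subsamples every $N$-th excursion, couples the subsampled exit points with i.i.d.\ $\til\pi$-samples via Lemma~\ref{lem:coupling}, and then runs a Chernoff bound using Kac's moment formula; this costs a factor $N$ outside and a factor $1/N$ in the exponent, which is exactly why the stated bound reads $n^\psi e^{-c\delta^2 r^{d-2}/n^\psi}$. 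You instead keep all the excursions, group them into consecutive blocks of length $N$, and decouple the blocks by a Doeblin/regeneration argument at the block boundaries, then apply Bernstein to the independent sub-gamma block sums; this yields $e^{-c\delta^2 A}$ with no loss of $1/N$ in the exponent, which of course implies the lemma. Your reduction of $\{\nsq\le k\}$ to $\{\sigma_k-\tau_0\ge t\}$, your WLOG $\delta^2 r^{d-2}\ge n^\psi$, and your accounting of the error terms ($\sigma_0-\tau_0$, the non-stationary first block, the remainder block) are all sound under that WLOG; the single-excursion inputs you identify (uniform mean $\asymp n^d/r^{d-2}$ over $\partial\B(0,R)$ and uniform exponential tails) are precisely Lemma~\ref{lem:trRlem}, Proposition~\ref{prop:excursion_length_starting_point}, and Kac's moment formula, i.e.\ the same inputs the paper uses.

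The one place where your write-up is too casual is the assertion that ``on the complement of that event the $m$ interior block sums \dots are genuinely independent.'' Conditioning on the success of all the regeneration coins does not preserve independence: the coin at boundary $i+1$ is generated jointly with the endpoint of block $i+1$, so conditioning on it biases that block's law, and minorizing the \emph{endpoint} distribution by $(1-\epsilon')\til\pi$ does not by itself minorize the law of the whole block trajectory by a fixed measure. The correct formulation is the inequality $\pr{\sum B_i>u}\leq\pr{\sum \tilde B_i>u}+\pr{\text{some regeneration fails}}$ for an i.i.d.\ family $(\tilde B_i)$ built on the same probability space, obtained by decomposing each $N$-step transition of the exit-point chain (which works here because $\min_z\til\pi(z)\gtrsim R^{-(d-1)}$ dwarfs the TV error $e^{-cN}$, as you note); the residual $O(\epsilon')$ discrepancy between the conditioned and unconditioned block laws must then be absorbed into the MGF bound. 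This is fixable with the tools you cite, but it is exactly the complication that the paper's cruder subsampling trick avoids at the harmless cost of the factor $n^\psi$ and the $1/n^\psi$ in the exponent.
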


\begin{remark}\label{rem:samestatement}\rm{
We note that Lemma~\ref{lem:n1} holds when we replace $\nsq,\trs$ by $\nbb,\trb$ respectively. The proof is identical to the one given below.
}
\end{remark}

\begin{proof}[Proof of Lemma~\ref{lem:n1}]

To simplify notation throughout the proof we simply write $N_1=\nsq_x(r,R,t)$ and $T_{r,R}=\trs_{r,R}$.
In order to avoid carrying too many constants, we will prove the result for~$t=n^d\log n$. The proof for~$t\asymp n^d\log n$ is exactly the same. 
Let $N=k_0n^\psi$, where $k_0$ is the mixing time  of the exit point chain as in Lemma~\ref{lem:coupling}.

Note that $A, A'\asymp r^{d-2}\log n$ by Lemma~\ref{lem:trRlem}. In the following proof we will write either~$A$, $A'$ or the expression above depending on whichever is more convenient.

We first show that 
\begin{align}\label{eq:lowergoal}
\pr{N_1<A} \lesssim Ne^{-c\delta^2  A/N} +  e^{-cN}.
\end{align}
Let $V_i=\sigma_i-\sigma_{i-1}$ for all $i\geq 1$.
By the definition of~$N_1$ we get
\[
\pr{N_1<A}= \pr{\sum_{i=1}^{A}V_i  + (\sigma_0 - \tau_0)\geq t}.
\]
It is easy to see that there exists a positive constant $c$ such that 
\begin{align}\label{eq:markov}
\pr{\sigma_0-\tau_0 \geq  n^2\cdot \sqrt{n}} \leq e^{-c\sqrt{n}}.
\end{align}
Indeed, $\sigma_0-\tau_0$ is the time it takes for the random walk to exit the ball $\B(x,R)$ when started from~$\partial\B(x,r)$. 
Since $R\leq n/4$ and the total variation mixing time~$\tmix\asymp n^2$(see for instance~\cite[Theorem~5.5 and Example~7.4.1]{LPW}), the probability that this time is $\gtrsim n^2$ is $\leq 1/2$, so iterating the Markov property proves~\eqref{eq:markov}.
Since $t =n^d\log n$ we obtain
\begin{align*}
\pr{N_1<A}&\leq \pr{\sum_{i=1}^{A}V_i>t\left(1-\frac{1}{n^{d-5/2}\log n}\right)} + e^{-c\sqrt{n}} 
\\ &\leq \pr{\sum_{i=1}^{A}V_i>t\left(1-\frac{1}{n^{d-5/2}\log n}\right)} + e^{-cN}, 
\end{align*}
since $\psi<1/2$.
It thus suffices to show for some positive constant $c$ we have that 
\begin{align}\label{eq:suffice}
\pr{\sum_{i=1}^{A}V_i>t\left(1-\frac{1}{n^{d-5/2}\log n}\right)} \lesssim Ne^{-c\delta^2 A/N}+ e^{-c N}.
\end{align}
In order to prove~\eqref{eq:suffice} we will establish the concentration of the sequence $(V_i)_i$. 
The idea is that if we allow enough time so that the corresponding exit point chain of Lemma~\ref{lem:coupling} mixes, then the times $(V_i)_i$ are essentially i.i.d.\ so we can apply a concentration inequality for i.i.d.\ random variables. 

Let $t'=t\left(1-\frac{1}{n^{d-5/2}\log n} - \frac{c_1n^{2\psi}}{r^{d-2}\log n}\right)$ for a positive constant $c_1$. We will set the value of $c_1$ later in the proof.  Observe that
\begin{align}
\label{eq:toohuge}
\begin{split}
      &  \pr{\sum_{i=1}^{A}V_i>t\left(1-\frac{1}{n^{d-5/2}\log n}\right)}\\
\leq& \pr{\sum_{i=1}^{N-1}V_i\geq \frac{c_1n^{d+2\psi}}{r^{d-2}}}
+ \pr{\sum_{i=N}^{A}V_i>t'}.
\end{split}
\end{align}
Since by Lemma~\ref{lem:trRlem} we have $\E{V_i}\asymp n^d/r^{d-2}$ uniformly over all starting points in $\partial \B(x,R)$, by the Markov property we have by possibly decreasing the value of $c>0$
\[
\max_x \prstart{V_i\geq \frac{c_1 n^{d+\psi}}{r^{d-2}}}{x} \lesssim e^{-cN}.
\]
Hence using the union bound we get that 
\begin{align}\label{eq:firstterm}
\pr{\sum_{i=1}^{N-1}V_i\geq\frac{c_1n^{d+2\psi}}{r^{d-2}}} \lesssim N e^{-cN}.
\end{align}
By decreasing the value of $c > 0$, the above is in turn $\lesssim e^{-cN}$.  It remains to bound the second term appearing on the right hand side of~\eqref{eq:toohuge}.
By applying a union bound and the strong Markov property we get
\begin{align}\label{eq:vit'}
\pr{\sum_{i=N}^{A}V_i>t'}  \leq N\max_x\prstart{\sum_{i=1}^{A/N} V_{iN}>\frac{t'}{N}}{x}
\end{align}
Let $(Z_i)$ be i.i.d.\ distributed according to $\til{\pi}$ and $(W_i)$ be i.i.d.\ excursion lengths across the annulus $\B(x,R) \setminus \B(x,r)$ when the starting point is $Z_i$. Let $(Y_i)$ be the exit points of the excursions of the random walk. Then we couple $(V_i)_{i\geq N}$ with $(W_i)_{i\geq N}$ as follows: by Lemma~\ref{lem:coupling} the optimal coupling for $Y=(Y_N,Y_{2N},\ldots,Y_{A})$ and $Z=(Z_1,\ldots,Z_{A/N})$ satisfies
\[
\pr{Y\neq Z} = \|\LL(Y) - \LL(Z)\|_{\rm{TV}} \leq \frac{A}{N}e^{-c N}.
\]
Then we take $V_i=W_i$ if $Y_i = Z_i$, otherwise we take $V_i$ and $W_i$ to be independent. Hence this gives that 
\begin{align}\label{eq:name}
\|\LL((V_{iN})_{i=1}^{A/N}) - \LL((W_i)_{i=1}^{A/N})\|_{\rm{TV}} \leq \|\LL(Y) - \LL(Z) \|_{\rm{TV}} \leq \frac{A}{N} e^{-c N}.
\end{align}
By decreasing the value of $c > 0$, the above is $\lesssim e^{-c N}$.
Note that for any two measures $\mu_1$ and $\mu_2$ we have for any event $D$ that
\[
\mu_1(D) \leq \mu_2(D) + \|\mu_1-\mu_2\|_{\rm{TV}}.
\]
Thus letting $K=\frac{t'}{N}$, by~\eqref{eq:name} we have
\begin{align}\label{eq:vin}
\pr{\sum_{i=1}^{A/N}V_{iN}>K} \leq \pr{\sum_{i=1}^{A/N}W_i>K}  + e^{-c N}.
\end{align}
Since $Z_i\sim\til{\pi}$, it follows that $\E{W_i}=T_{r,R}$ for all $i$. Using 
Kac's moment formula \cite{FP_kac} we obtain for all $j \in \N$ and a positive constant $c$ 
\[
\E{W_1^j} \leq j! c^j T_{r,R}^j.
\]
Thus for $\theta>0$ we have
\[
\E{e^{\theta W_1}} \leq 1 +  \theta T_{r,R} +  \sum_{j=2}^{\infty} (c\theta T_{r,R})^j.
\]
Choosing $\theta = c_1 \delta /T_{r,R}$ we get that
\[
\E{e^{\theta W_1}} \leq 1 +  c_1\delta + \frac{(cc_1\delta)^2}{1- c c_1 \delta} \leq \exp\left(c_1\delta +\frac{(cc_1\delta)^2}{1-c c_1 \delta}  \right),
\]
and hence
\begin{align*}
\pr{\sum_{i=1}^{A/N} W_i >K} &\leq \left(\E{e^{\theta W_1}}\right)^{A/N} \exp\left(-\theta K \right) \leq \exp\left(\frac{A}{N}\left(c_1\delta +\frac{(cc_1\delta)^2}{1- c c_1 \delta}  \right) -\frac{c_1 \delta K}{T_{r,R}} \right).
\end{align*}
Since $\delta  r^{d-2}n^{-\psi-1/2}\leq 1$ and $\delta n^\psi
\leq 1$, substituting the values of $A$ and $K$ and choosing $c_1>0$ sufficiently small we get that for $n$ sufficiently large 
\[
\pr{\sum_{i=1}^{A/N} W_i >K} \lesssim e^{-c'\delta^2 A/N},
\]
where $c'$ is a positive constant. Hence this together with \eqref{eq:firstterm}, \eqref{eq:vit'}, and~\eqref{eq:vin} proves~\eqref{eq:lowergoal}.

Next we show that
\begin{align}\label{eq:n1a'}
\pr{N_1>A'} \lesssim Ne^{-c'\delta^2  A/N} +  e^{-cN}.
\end{align}
By the definition of $N_1$ again we get
\[
\pr{N_1>A'} = \pr{\sum_{i=1}^{A'}V_i+ (\sigma_0 -\tau_0)<t} \leq \pr{\sum_{i=N}^{A'}V_i<t}.
\]
Using the same coupling as before, it suffices to prove that there exists a positive constant $c'$ such that
\begin{align*}
\pr{\sum_{i=1}^{A'/N}W_i <\frac{(1-\delta)T_{r,R}A'}{N}} \lesssim e^{-c'\delta^2t/(T_{r,R}N)},
\end{align*}
where $(W_i)_i$ are i.i.d.\ excursion lengths started from i.i.d.\ points $(Z_i)_i$ distributed according to $\til{\pi}$. 
By Chernoff's bound we have for $\theta>0$ that
\begin{align}\label{eq:cher}
\pr{\sum_{i=1}^{A'/N}W_i <\frac{(1-\delta)T_{r,R}A'}{N}} \leq \left(\E{e^{-\theta W_1}}\right)^{A'/N} e^{\theta (1-\delta)T_{r,R}A'/N}.
\end{align}
Using that $e^{-x} \leq 1-x+x^2$ and that $\E{W_1^2} \leq c T_{r,R}^2$ by Kac's moment formula \cite{FP_kac}, we have
\begin{align*}
\E{e^{-\theta W_1}} \leq 1-\theta T_{r,R} +\theta^2\E{W_1^2} \leq 1-\theta T_{r,R} + c\theta^2T_{r,R}^2 \leq \exp\left(-\theta T_{r,R} + c\theta^2T_{r,R}^2\right).
\end{align*}
By taking $\theta = c_1\delta/T_{r,R}$ and plugging everything into~\eqref{eq:cher} we deduce
\[
\pr{\sum_{i=1}^{A'/N}W_i <\frac{t}{N}} \leq \exp\left(-\frac{A'}{N}\delta^2 c_1(1-cc_1)\right).
\]
Choosing $c_1>0$ small enough makes $1-cc_1$ positive, hence
\[
\pr{\sum_{i=1}^{A'/N}W_i <\frac{t}{N}} \lesssim e^{-c\delta^2 A'/N}.
\]
Recalling that $A$ and $A'$ are up to constants equal to $r^{d-2}\log n$ by Lemma~\ref{lem:trRlem}, the result follows by combining~\eqref{eq:lowergoal} and~\eqref{eq:n1a'}. 
\end{proof}

\begin{definition}\label{def:w}\rm{
Fix $\beta\in(0,1)$. 
We let $W$ be a random variable whose law is equal to that of the number of excursions the random walk makes across the annulus $\SS(0,n^\beta + n^\phi) \setminus \SS(0,n^{\beta})$ during one excursion across $\ball{10 n^\beta} \setminus \SS(0,n^\beta)$ when the starting point of the excursion on $\partial \ball{10n^\beta}$ is chosen according to $\til{\pi}$ from Lemma~\ref{lem:coupling}.
}
\end{definition}

In the proofs of Theorem~\ref{thm:uniform} and~\ref{thm:exact} we will take $\beta=\alpha-\epsilon$ for some small $\epsilon>0$. We suppress the dependency of $W$ on $\beta$ to lighten the notation.

\begin{lemma}\label{cl:hit}
The random variable~$W$ defined above is stochastically dominated by the sum of~$2d$ independent geometric random variables of parameter $n^{\phi-\beta}$ and satisfies
\[
\E{W} \asymp n^{\beta-\phi}.
\]
\end{lemma}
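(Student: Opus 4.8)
The plan is to exhibit a renewal structure for $W$ and reduce the lemma to a single hitting estimate. Write $S=\SS(0,n^\beta)$, $F=\SS(0,n^\beta+n^\phi)$ and $B=\ball{10n^\beta}$; since $\beta<1$, for large $n$ the ball $B$ does not wrap around the torus, so all of the estimates below may be read as statements about simple random walk on $\Z^d$. During the excursion across $B\setminus S$ that defines $W$, the walk sits on $\partial F$ immediately after each completed excursion across $F\setminus S$ (and, possibly, once before the first one); from such a position it either reaches $S$ and begins another $F\setminus S$-excursion, or reaches $\partial B$ and the big excursion terminates. Setting $q(x)=\prstart{\tau_S<\tau_{\partial B}}{x}$ for $x\in\partial F$, we therefore have $W\ge 1$ always, and each subsequent excursion occurs, conditionally on the current position $x\in\partial F$, with probability $q(x)$. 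So everything comes down to controlling $1-q$: a uniform lower bound $1-q(x)\gtrsim n^{\phi-\beta}$ gives the domination and $\E W\lesssim n^{\beta-\phi}$, while an upper bound $1-q(x)\lesssim n^{\phi-\beta}$ at the positions that actually arise gives $\E W\gtrsim n^{\beta-\phi}$.

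For the domination, note first that every $x\in\partial F$ lies on one of the $2d$ faces of $F$; after relabelling, say on $\{y_j=\lceil\tfrac12(n^\beta+n^\phi)\rceil\}$. The $j$-th coordinate of the walk is a lazy one-dimensional simple random walk, and laziness does not affect hitting probabilities; since the walk cannot be in $S$ while its $j$-th coordinate exceeds $\lceil\tfrac12 n^\beta\rceil$, and has certainly left $B$ once that coordinate reaches $10n^\beta$, gambler's ruin gives $1-q(x)\gtrsim n^{\phi-\beta}$ uniformly in $x$. To get the full statement, decompose $W=\sum_{j=1}^d(W_j^++W_j^-)$, where $W_j^\pm$ counts the $F\setminus S$-excursions that begin on the face $\{y_j=\pm\lceil\tfrac12(n^\beta+n^\phi)\rceil\}$. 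For such an excursion the $j$-th coordinate starts at $\lceil\tfrac12(n^\beta+n^\phi)\rceil$ and, before the excursion can meet $S$, must reach the level $\lceil\tfrac12 n^\beta\rceil$; so it performs a downcrossing of the interval of width $\asymp n^\phi$ between these levels, distinct excursions yield distinct downcrossings, and all of them occur before the $j$-th coordinate leaves $[-10n^\beta,10n^\beta]$. By the strong Markov property and gambler's ruin, the number of such downcrossings is stochastically dominated by a geometric random variable of parameter $\asymp n^{\phi-\beta}$ (from the bottom of the interval the coordinate leaves $[-10n^\beta,10n^\beta]$ before returning to the top with probability $\asymp n^{\phi-\beta}$). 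Finally, the $d$ coordinate walks can be built from independent sequences of $\pm1$-increments — only the coordinate-selection sequence is shared, and it does not affect downcrossing counts — so the $2d$ variables $W_j^\pm$ are jointly dominated by $2d$ \emph{independent} geometric random variables of parameter $\asymp n^{\phi-\beta}$; the factor $10$ in the definition of $B$ provides ample slack to absorb the universal constants and take the parameter to be $n^{\phi-\beta}$. In particular $\E W\le 2d\,n^{\beta-\phi}\asymp n^{\beta-\phi}$.

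For the matching lower bound $\E W\gtrsim n^{\beta-\phi}$ I would show that $1-q(x)\lesssim n^{\phi-\beta}$ at the positions that actually occur between excursions. With probability $1-O(n^{\phi-\beta})$ the walk, after reaching $S$ and returning to $\partial F$, does so at a point lying well inside a face of $F$ — say at $\ell^\infty$-distance $\gtrsim n^{(\beta+\phi)/2}$ from the edges of that face — because the edge region of $\partial S$ carries only that much harmonic measure. Near such a point $S$ looks like a half-space, and a boundary-Harnack-type comparison with the harmonic function of a half-space slab (using the discrete estimates of \cite{LawLim}) yields $1-q(x)\asymp \mathrm{dist}(x,S)/n^\beta\asymp n^{\phi-\beta}$. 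Hence, conditionally on starting from such a point, the number of further $F\setminus S$-excursions stochastically dominates a geometric of parameter $O(n^{\phi-\beta})$, and $\E W\gtrsim n^{\beta-\phi}$ follows. (Alternatively, this bound can be obtained from a renewal/concentration estimate of the type in Lemma~\ref{lem:n1}, together with the computation that an $F\setminus S$-excursion completed inside $B$ has mean length $\asymp n^{\beta+\phi}$ while the big excursion devotes $\asymp n^{2\beta}$ units of time to such excursions.)

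The hard part is the hitting estimate for $1-q$. Its lower bound $\gtrsim n^{\phi-\beta}$ is the elementary gambler's-ruin computation above, but the upper bound — that the large box $S$ is hit before $\partial B$ with probability $1-O(n^{\phi-\beta})$ from the relevant starting points — is delicate near the edges and corners of the boxes, where the inscribed-ball comparison is useless (there the escape probability behaves like a fractional power of the distance to $S$) and one must exploit that $S$ is a uniformly ``fat'' set via boundary-Harnack and capacity estimates for $\Z^d$; the bound on the harmonic measure of the edge region of $\partial S$ used above is of the same flavour. The renewal bookkeeping, the one-dimensional gambler's-ruin estimates, and the constant-tracking in the $2d$-fold domination are routine in comparison.
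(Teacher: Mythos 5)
Your stochastic domination (and hence the upper bound $\E{W}\lesssim n^{\beta-\phi}$) is essentially the paper's argument: decompose the thin-annulus excursions according to the face of $\SS(0,n^\beta+n^\phi)$ on which they begin, dominate each face count by the number of one-dimensional slab crossings of the corresponding coordinate before that coordinate reaches $\pm 10n^\beta$, apply gambler's ruin, and use the independence of the coordinate increment sequences. That part is fine. The lower bound, however, is where you diverge from the paper, and your argument has a genuine quantitative gap. You claim (i) that the exit distribution of a thin-annulus excursion puts mass only $O(n^{\phi-\beta})$ on the $n^{(\beta+\phi)/2}$-neighbourhood of the edges of $\partial F$, and (ii) that from a point at distance $n^\phi$ from a face and distance $n^{(\beta+\phi)/2}$ from the edges, the escape probability $1-q$ is $O(n^{\phi-\beta})$. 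Both are false. For (i), the harmonic measure of an edge region of width $w$ is of order $w/n^\beta$ (up to the edge corrections), so for $w=n^{(\beta+\phi)/2}$ it is of order $n^{(\phi-\beta)/2}\gg n^{\phi-\beta}$. For (ii), near a codimension-two edge the boundary is a wedge of interior angle $3\pi/2$ in the transverse plane, whose harmonic profile is $r^{2/3}\sin(2\theta/3)$; a point at distance $\epsilon=n^\phi$ from the face and $s=n^\rho$ from the edge therefore escapes to distance $n^\beta$ with probability $\asymp \epsilon\, s^{-1/3} n^{-2\beta/3}=n^{\phi-\beta}\cdot n^{(\beta-\rho)/3}$, which for $\rho=(\beta+\phi)/2$ exceeds $n^{\phi-\beta}$ by the polynomial factor $n^{(\beta-\phi)/6}$; the linear comparison $1-q\asymp \mathrm{dist}(x,S)/n^\beta$ only holds at distance $\asymp n^\beta$ from the edges. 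As written, your per-renewal termination probability is only $O(n^{(\phi-\beta)/2})$, giving $\E{W}\gtrsim n^{(\beta-\phi)/2}$, which is not the claimed bound. A multiscale version can be rescued (summing the product of harmonic measure and escape probability over dyadic shells at distance $s$ from the edges gives $\sum_s (s/n^\beta)\cdot n^{\phi}s^{-1/3}n^{-2\beta/3}\asymp n^{\phi-\beta}$, dominated by the bulk), but one must then also handle the fact that the renewal positions are not i.i.d.: a thin excursion started near a corner of $\partial S$ exits near a corner of $\partial F$ with probability bounded below, and from there $1-q\asymp n^{a(\phi-\beta)}$ for a corner exponent $a<1$, so the required bound is not uniform over starting points and you are forced back into the harmonic-measure bookkeeping you were trying to avoid.

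The paper's lower bound sidesteps all of this. It shows that with probability bounded below the big excursion exits $\partial F$ within $n^\beta/16$ of the centre of a face; it then uses the decomposition of the walk into a one-dimensional walk $U$ (the coordinate normal to that face) and an independent walk $V$ in the remaining coordinates, together with the fact that $V$ takes time $\gtrsim n^{2\beta}$ to move distance $n^\beta/4$ with probability bounded below. On that event every slab crossing of $U$ during that time window is a genuine thin-annulus excursion, and the number of such crossings of a slab of width $n^\phi$ in time $n^{2\beta}$ has expectation $\gtrsim n^{\beta-\phi}$. No boundary-Harnack or edge/corner estimates are needed. If you want to keep a renewal-style proof, your parenthetical alternative (Wald-type bookkeeping: the big excursion spends $\asymp n^{2\beta}$ units of time inside $\ball{10n^\beta}$ while each completed thin excursion costs $\asymp n^{\beta+\phi}$) is closer in spirit to something that can be made rigorous, but it too is only a sketch and still requires controlling the conditioning on the excursion returning to $S$.
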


\begin{proof}
We start by proving that $\E{W} \gtrsim n^{\beta-\phi}$. We note that $\til{\pi}$ is up to multiplicative constants the same as the uniform distribution on $\partial\ball{10n^\beta}$ \cite[Lemma~6.3.7]{LawLim}. We can realize the random walk $X$ in the following way: let $U$ be a simple random walk on $\Z$ and $V$ be a simple random walk on $\Z^{d-1}$ which is independent of~$U$. Let $\xi(i)$ be i.i.d.\ Bernoulli random variables with success probability $(d-1)/d$. Write $r(k) =\sum_{i=1}^{k}\xi(i)$ and set 
\[
Z(k) = (U(k-r(k)),V(r(k))).
\]
Then it is elementary to check that $Z$ is a simple random walk in $\Z^d$, and hence $X(k)=Z(k)\bmod n$ is a simple random walk on $\Z_n^d$.

Let $x_0$ be the center of the side of the box which intersects the positive part of the first coordinate axis and 
let $A$ be the set of points of $\partial \SS(0,n^\beta+n^\phi)$ that are within distance $n^\beta/16$ of $x_0$. Then if $\tau$ is the first hitting time of $\partial\SS(0,n^\beta+n^\phi)$ after having first hit $\partial\SS(0,n^\beta)$, then it is easy to see that
\[
\pr{X(\tau) \in A} \geq p_0,
\]
where $p_0$ is a positive constant. Indeed, it is a standard fact that with positive probability Brownian motion stays close to a given continuous function $f:[0,1]\to \R^d$ for all times $t\in [0, 1]$. Hence the above claim is true for a Brownian motion started uniformly on $\partial\ball{10n^\beta}$. The result for random walk follows by Donker's invariance principle \cite[Theorem~8.6.5]{Durrett_prob}.

We now let 
\[
T=\min\left\{t\geq \tau: |V(r(t))-V(r(\tau))|\geq \frac{n^\beta}{4}\right\},
\]
i.e.\ $T$ is the first time that $V(r(\cdot))$ reaches distance $n^\beta/4$ from where it hit $\partial\SS(0,n^\beta+n^\phi)$ at time $r(\tau)$.  Let $s(t) = t - r(t)$.  Note that $s(T) - s(\tau)$ gives the number of steps that the random walk makes in the first coordinate axis during the time interval from $\tau$ to $T$.  Then there exist positive constants $p_1$ and $c_d$ depending only on $d$ such that
\begin{align}\label{eq:timeT}
\pr{s(T)-s(\tau)\geq c_dn^{2\beta}} \geq p_1.
\end{align}
On the event $\{X(\tau) \in A\}$ the random variable $W$ is greater than or equal to the number $E$ of excursions that $U$ makes from $n^\beta$ to $n^\beta+n^\phi$ before time $T$. Then using~\eqref{eq:timeT} we get that for all $u$
\begin{align*}
\pr{E\geq u}
\geq& \pr{E\geq u, s(T)-s(\tau)\geq c_dn^{2\beta}, X(\tau) \in A}\\
\gtrsim& \prcond{E\geq u}{ s(T)-s(\tau)\geq c_d n^{2\beta}}{}.
\end{align*}
Since $U$ is independent of $V$, on the event $s(T)-s(\tau)\geq c_d n^{2\beta}$, the random variable $E$ stochastically dominates the number of excursions that a one dimensional walk started from $0$ makes from $0$ to $n^\phi$ until time $c_dn^{2\beta}$. It  now immediately follows that 
\[
\E{E}\gtrsim n^{\beta-\phi}.
\]
We now turn to show the first assertion of the lemma.
Let $(Z^1,\ldots,Z^d)$ be a simple random walk in $\Z^d$. For $i=1,\ldots,d$, we let
\begin{itemize}
\item $A_i$ be the number of excursions that $Z^i$ makes from $-\frac{n^\beta}{2}$ to $-\frac{n^\beta}{2} - \frac{n^\phi}{2}$ before hitting $\pm 10n^{\beta}$ 
\item $B_i$ be the number of excursions that $Z^i$ makes from $\frac{n^\beta}{2}$ to $\frac{n^\beta}{2}+\frac{n^\phi}{2}$ before hitting~$\pm 10n^\beta$.
\end{itemize}
It is not hard to see that once the random walk hits $\partial\SS(0,n^\beta+n^\phi)$, then the number of excursions it makes from $\partial\SS(0,n^\beta)$ to $\partial\SS(0,n^\beta+n^\phi)$  before hitting $\partial\ball{10n^\beta}$ is stochastically dominated by 
\[
\sum_{i=1}^{d}(A_i + B_i).
\]
It follows from the gambler's ruin estimate that the $A_i$'s and $B_i$'s are geometric of parameter $n^{\phi-\beta}$, hence this completes the proof of the lemma.
\end{proof}

\begin{claim}\label{cl:geom}
Let $X$ be a geometric random variable of success probability $p\in (0,1/2]$ taking values in $\{1,2,\ldots\}$. Then for all $j$ we have
\[
\E{X^j} \lesssim \frac{j!}{p^j}.
\]
\end{claim}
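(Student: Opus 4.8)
The plan is to reduce to a moment computation for an exponential random variable via an exact distributional identity. Recall that $X$ has $\pr{X>k} = (1-p)^k$ for every integer $k\ge 0$. So the first step is to set $\lambda := -\log(1-p) = \log\tfrac{1}{1-p} > 0$ and let $Y$ be exponentially distributed with rate $\lambda$; then $\pr{\lceil Y\rceil > k} = \pr{Y > k} = e^{-\lambda k} = (1-p)^k$ for every integer $k\ge 0$, and since $X$ and $\lceil Y\rceil$ are both supported on $\{1,2,\dots\}$ this gives $X \eqdist \lceil Y\rceil$. In particular, using the deterministic bound $\lceil Y\rceil \le Y+1$, we get $\E{X^j} \le \E{(Y+1)^j}$.

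Next I would expand by the binomial theorem, $\E{(Y+1)^j} = \sum_{i=0}^{j}\binom{j}{i}\E{Y^i}$, and insert the standard exponential moment formula $\E{Y^i} = i!\,\lambda^{-i}$, so that $\binom{j}{i}\E{Y^i} = \frac{j!}{(j-i)!}\,\lambda^{-i}$. Two elementary inequalities then finish the job: first, $e^{-p}\ge 1-p$ gives $\lambda \ge p$; second, for $p\le 1/2$ we have $1-p \ge 1/2 \ge e^{-1}$, hence $\lambda \le 1$. Combining these, for every $0\le i\le j$ we have $\lambda^{-i}\le \lambda^{-j}\le p^{-j}$, so
\[
\E{(Y+1)^j} \;\le\; \frac{j!}{p^{j}}\sum_{i=0}^{j}\frac{1}{(j-i)!} \;\le\; \frac{e\,j!}{p^{j}},
\]
which yields $\E{X^j}\le e\,j!/p^{j}$, as desired.

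I do not expect a genuine obstacle here; the one point worth care is that the implicit constant in $\lesssim$ must be independent of $j$ as well as of $p$. This rules out the cheaper route of bounding $\E{X^j}\le j!\,\theta^{-j}\,\E{e^{\theta X}}$ with $\theta$ of order $p$ (e.g.\ $\theta = p/2$), since controlling the moment generating function away from its singularity at $\theta=\lambda$ costs a spurious geometric factor such as $2^j$. Going through the exact identity $X\eqdist\lceil Y\rceil$ avoids this entirely.
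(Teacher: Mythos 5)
Your proof is correct, and it takes a genuinely different route from the paper. The paper works directly with the series $\E{X^j}\le\sum_{x\ge 1}x^je^{-px}$, compares it to the Gamma integral $\int_1^\infty x^je^{-px}\,dx$ by a careful monotonicity argument around the maximum of $x\mapsto x^je^{-px}$ at $x=j/p$, and then controls the extra boundary term $(j/p)^je^{-j}$ via Stirling's formula. You instead use the exact distributional identity $X\eqdist\lceil Y\rceil$ for $Y$ exponential of rate $\lambda=-\log(1-p)$, which converts the problem into the closed-form exponential moments $\E{Y^i}=i!\lambda^{-i}$; the two elementary bounds $p\le\lambda\le 1$ (the latter using $p\le 1/2$) and the binomial expansion of $(Y+1)^j$ then give the explicit constant $e$, uniform in $j$ and $p$, which is exactly what the application of this claim in Lemmas~\ref{cl:hit} and~\ref{lem:n2} requires. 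Your approach buys a cleaner argument with no Stirling and no sum--integral comparison, at the mild cost of introducing the coupling; your closing remark about why one cannot simply bound $\E{X^j}\le j!\theta^{-j}\E{e^{\theta X}}$ with $\theta\asymp p$ is also accurate, since that route leaves a geometric factor in $j$. Both proofs are valid; yours is, if anything, the more economical of the two.
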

\begin{proof}
See Appendix~\ref{app:elementary}.
\end{proof}

\begin{lemma}\label{lem:n2}
For each $\psi\in (0,1/2)$ there exists $n_0\geq 1$ and a positive constant~$c$ such that for all $n\geq n_0$ the following is true.
Fix $\beta, \phi\in(0,1)$ and~$t\asymp n^d\log n$. For all $\delta>0$ such that $\delta n^{\beta(d-2)-\psi-1/2}\leq 1$ and $\delta n^{\psi}\leq 1$ we let
\begin{align}\label{eq:defe}
\underline{E}(t,\delta) = \frac{t\E{W}}{(1+\delta)\trs_{n^\beta,10n^\beta}}\quad \text{and} \quad \overline{E}(t,\delta) = \frac{t\E{W}}{(1-\delta)\trs_{n^\beta,10n^\beta}}.
\end{align}
Then for all $x$ we have
\begin{align*}
\pr{\nsqsq_x(n^\beta,n^\beta+n^\phi,t)\notin [\underline{E}(t,\delta),\overline{E}(t,\delta)]}
\lesssim  n^\psi \exp\left(-c\delta^2 n^{\beta(d-2)-\psi}\right)  + e^{-cn^\psi}.
\end{align*}
\end{lemma}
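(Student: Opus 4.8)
The plan is to decompose the trajectory of $X$ into ``large'' excursions across the annulus $\B(x,10n^\beta)\setminus\SS(x,n^\beta)$ and to count, inside each large excursion, the number of ``small'' excursions across the thin annulus $\SS(x,n^\beta+n^\phi)\setminus\SS(x,n^\beta)$; this splits the problem into controlling the number of large excursions (which is exactly Lemma~\ref{lem:n1}) and controlling a sum of that many weakly dependent copies of the random variable $W$ from Definition~\ref{def:w} (which we do by a Chernoff bound of the same flavour as in the proof of Lemma~\ref{lem:n1}). Concretely, write $N_1=\nsq_x(n^\beta,10n^\beta,t)$ and let $W_j'$ be the number of excursions across $\SS(x,n^\beta+n^\phi)\setminus\SS(x,n^\beta)$ made by $X$ during its $j$-th excursion across $\B(x,10n^\beta)\setminus\SS(x,n^\beta)$ ($j\ge 0$); conditionally on the entrance points $Y_{j-1}\in\partial\B(x,10n^\beta)$, which form the finite, constant-mixing-time chain of Lemma~\ref{lem:coupling}, each $W_j'$ has the law of Definition~\ref{def:w} started from $Y_{j-1}$, so once $Y_{j-1}$ is ``refreshed'' to $\til\pi$ it becomes a copy of $W$. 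Since a small excursion is completed only while $X$ is inside $\B(x,10n^\beta)$ and after it has hit $\SS(x,n^\beta)$, every small excursion finished before time $t$ lies in one of the first $N_1$ large excursions, possibly except the one in progress at time $t$; hence deterministically
\[
\sum_{j=1}^{N_1-1}W_j'\ \le\ \nsqsq_x(n^\beta,n^\beta+n^\phi,t)\ \le\ \sum_{j=0}^{N_1}W_j'+1 ,
\]
so it suffices to control $\sum_{j=1}^{N_1}W_j'$ together with the boundary terms $W_0',W_{N_1}'$.

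I would first apply Lemma~\ref{lem:n1} with $r=n^\beta$, $R=10n^\beta$ and $\delta$ replaced by $\delta/3$ (its hypotheses are implied by ours), obtaining $N_1\in[A,A']$ with $A=t/((1+\tfrac{\delta}{3})\trs_{n^\beta,10n^\beta})$ and $A'=t/((1-\tfrac{\delta}{3})\trs_{n^\beta,10n^\beta})$, off an event of probability $\lesssim n^\psi\exp(-c\delta^2 n^{\beta(d-2)-\psi})+e^{-cn^\psi}$, with $A,A'\asymp n^{\beta(d-2)}\log n$ by Lemma~\ref{lem:trRlem}. Next I would show that for deterministic $m\asymp n^{\beta(d-2)}\log n$,
\[
\pr{\Big|\sum_{j=1}^m W_j'-m\E{W}\Big|>\tfrac{\delta}{6}\,m\E{W}}\ \lesssim\ n^\psi\exp\!\big(-c\delta^2 n^{\beta(d-2)-\psi}\big)+e^{-cn^\psi}
\]
by repeating the scheme used for $\sum_i V_i$ in Lemma~\ref{lem:n1}: with $N=k_0 n^\psi$, split $\sum_{j\le m}W_j'$ into the $N$ subsequences $\{W_{(l-1)N+r}':l\ge 1\}$, $r=1,\dots,N$; by Lemma~\ref{lem:coupling} each subsequence is within total variation $\lesssim(m/N)e^{-cN}$ of an i.i.d.\ sequence of copies of $W$; and on the i.i.d.\ side, since $W$ is stochastically dominated by a sum of $2d$ geometrics of parameter $p=n^{\phi-\beta}$ (Lemma~\ref{cl:hit}), Claim~\ref{cl:geom} gives $\E{W^j}\le Cj!(C/p)^j$ for all $j$, whence keeping the exact linear term and bounding the rest yields $\E{e^{\pm\theta W}}\le\exp(\pm\theta\E{W}+C\theta^2\E{W}^2)$ for $0<\theta\le c/\E{W}$; since $\E{W}\asymp n^{\beta-\phi}$, a standard Chernoff bound then bounds each i.i.d.\ subsequence sum by $\exp(-c\delta^2 m/N)$, and a union bound over the $N$ subsequences together with the coupling errors gives the display because $m/N\asymp n^{\beta(d-2)-\psi}\log n$. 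Finally, since $W_j'$ is itself dominated by a sum of $2d$ geometrics of parameter $n^{\phi-\beta}$, we have $\pr{W_j'>n^{\psi+\beta-\phi}}\lesssim e^{-cn^\psi}$ for each $j$, so on $\{N_1\le A'\}$ a union bound shows $W_0'\vee W_{N_1}'\le n^{\psi+\beta-\phi}$ off an event of the allowed size.

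It remains to assemble. We may assume the claimed bound is not vacuous, i.e.\ $n^\psi\exp(-c\delta^2 n^{\beta(d-2)-\psi})<1$; this forces $\psi<\beta(d-2)$ and $\delta\gtrsim\sqrt{\log n}\,n^{(\psi-\beta(d-2))/2}$, hence $\delta\,m\,\E{W}\gg n^{\psi+\beta-\phi}$, so the boundary terms $W_0',W_{N_1}'$ are negligible next to the width $\asymp\delta m\E{W}$ of $[\underline E(t,\delta),\overline E(t,\delta)]$. On the intersection of the events above, the sandwich, $N_1\in[A,A']$, the concentration of $\sum_{j\le A}W_j'$ and $\sum_{j\le A'}W_j'$ within a factor $1\pm\tfrac{\delta}{6}$, and the boundary bounds pin $\nsqsq_x(n^\beta,n^\beta+n^\phi,t)$ between $(1-\tfrac{\delta}{6})A\,\E{W}-n^{\psi+\beta-\phi}$ and $(1+\tfrac{\delta}{6})A'\,\E{W}+n^{\psi+\beta-\phi}$; since for $\delta\le 1$ the ratios $\tfrac{1-\delta/6}{1+\delta/3}$ and $\tfrac{1+\delta/6}{1-\delta/3}$ lie strictly between $\tfrac1{1+\delta}$ and $\tfrac1{1-\delta}$ with room to absorb the $\pm n^{\psi+\beta-\phi}$, this puts $\nsqsq_x(n^\beta,n^\beta+n^\phi,t)$ inside $[\underline E(t,\delta),\overline E(t,\delta)]$, and summing the error terms gives the asserted bound.

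\textbf{Main obstacle.} The crux is the Chernoff bound for $\sum_j W_j'$ with the \emph{correct} mean $\E{W}$ in the exponent: bare stochastic domination by a sum of geometrics is too lossy, since it would replace $\E{W}$ by the (possibly much larger) mean of the dominating sum, so one must keep the linear term of the moment generating function exactly and invoke Claim~\ref{cl:geom} only for the quadratic-and-higher part; and because the $W_j'$ are not i.i.d.\ but merely functionals of the fast-mixing chain $(Y_j)$, the same blocking-and-coupling device as in Lemma~\ref{lem:n1} must be run. A secondary point is the three-way $\delta$-accounting (Lemma~\ref{lem:n1}, the sum, the boundary terms), which closes only because the boundary and remainder contributions are negligible precisely in the regime where the stated estimate is non-trivial.
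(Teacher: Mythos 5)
Your proposal is correct and follows essentially the same route as the paper: decompose into excursions across the ``big'' annulus $\B(x,10n^\beta)\setminus\SS(x,n^\beta)$, control their number via Lemma~\ref{lem:n1}, and control the sum of the per-excursion counts $W_i$ by the same blocking-and-coupling scheme (Lemma~\ref{lem:coupling}) plus a Chernoff bound that keeps the linear term of the MGF exact and uses Lemma~\ref{cl:hit} and Claim~\ref{cl:geom} only for the higher moments. The only cosmetic difference is that the paper uses the one-sided containments $\{N_2<B,\,N_1>A\}\subseteq\{\sum_{i\le A}W_i<B\}$ and $\{N_2>B',\,N_1<A'\}\subseteq\{\sum_{i\le A'}W_i>B'\}$, which avoids your two-sided sandwich and hence the extra (correctly handled, but unnecessary) argument disposing of the boundary terms $W_0',W_{N_1}'$.
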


\begin{proof}

To simplify notation throughout the proof we write $B=\underline{E}(t,\delta)$, $B'=\overline{E}(t,\delta)$, $N_1 =  \nsq_x(n^\beta,10n^\beta,t)$, and $N_2=\nsqsq_x(n^\beta,n^\beta+n^\phi,t)$. Let $N=k_0n^\psi$, $A$, and $A'$ be as in Lemma~\ref{lem:n1} with $r=n^\beta$, $R=10n^\beta$ and $\delta$ replaced by $\delta/2$.
We start with the upper bound. We have
\begin{align*}
\pr{N_2<B} \leq \pr{N_1<A} + \pr{N_2<B,N_1>A}.
\end{align*}
The first probability can be bounded using Lemma~\ref{lem:n1}. We first notice that all excursions across $\SS(x,n^\beta+n^\phi) \setminus\SS(x,n^\beta)$ are contained in the excursions across $\B(x,10n^\beta) \setminus \SS(x,n^\beta)$. Hence it follows that we can bound the second probability by the probability that in the first~$A$ excursions of the annulus $\B(x,10n^\beta)\setminus\SS(x,n^\beta)$ the number of excursions from $\partial\SS(x,n^\beta)$ to $\partial\SS(x,n^\beta+n^\phi)$ is at most~$B$. Let $W_i$ be the number of excursions across the ``thin'' annulus (i.e.\ $\SS(x,n^\beta+n^\phi)\setminus\SS(x,n^\beta)$) during the $i$-th excursion across the ``big'' annulus (i.e.\ $\B(x,10n^\beta)\setminus\SS(x,n^\beta)$). We first show
\begin{align}\label{eq:neweq}
\pr{\sum_{i=1}^{A}W_i<B} \lesssim Ne^{-c\delta^2 n^{\beta(d-2) -\psi}} + e^{-cN}.
\end{align}
By a union bound and the strong Markov property we get
\begin{align}\label{eq:unionbound}
\pr{\sum_{i=1}^{A}W_i<B} \leq \pr{\sum_{i=N}^{A}W_i<B} \leq N \max_x \prstart{\sum_{i=1}^{A/N}W_{iN}<\frac{B}{N}}{x}
\end{align}
Let $(Z_i)_i$ be i.i.d.\ distributed according to $\til{\pi}$ on $\partial\ball{10n^\beta}$ and let $(V_i)_i$ be i.i.d.\ with the same distribution as $W$ when the starting point of the excursion on $\partial\ball{10n^{\beta}}$ is $Z_i$. Let $(Y_i)_i$ be the exit points of the excursions of the random walk. 
Then under the optimal coupling of $Y=(Y_{N},\ldots,Y_{A/N})$ and $Z=(Z_1,\ldots,Z_{A/N})$ we get from Lemma~\ref{lem:coupling}
\[
\pr{Y\neq Z} = \|\LL(Y) - \LL(Z)\|_{\rm{TV}} \leq \frac{A}{N}e^{-c N}.
\]
Thus we can couple $(W_i)_i$ with $(V_i)_i$ by letting $V_i=W_i$ if $Y_i=Z_i$ and otherwise taking $V_i$ and $W_i$ to be independent.  This now gives
\[
\|\LL((W_i)_{i=1}^{A/N}) - \LL((V_i)_{i=1}^{A/N})\|_{\rm{TV}} \leq \|\LL(Y) - \LL(Z)\|_{\rm{TV}} \leq   \frac{A}{N} e^{-c N}.
\]
We obtain
\begin{align}\label{eq:obtain}
\pr{\sum_{i=1}^{A/N}W_{iN}<\frac{B}{N}}\leq \pr{\sum_{i=1}^{A/N}V_i<\frac{B}{N}} + \frac{A}{N} e^{-c N}.
\end{align}
By adjusting the value of $c > 0$, the error term above is $\lesssim e^{-c N}$.  So now we need to bound the probability appearing on the right hand side of \eqref{eq:obtain}.
Applying Chernoff's inequality we get for $\theta>0$
\begin{align}
\label{eq:cherb}
\pr{\sum_{i=1}^{A/N}V_i<\frac{B}{N}} \leq \E{e^{-\theta\sum_{i=1}^{A/N}V_i}} e^{\theta B/N} =\E{e^{-\theta W}}^{A/N} e^{\theta B/N},
\end{align}
where the last step follows since the $(V_i)_i$ are i.i.d.\ with $V_i\sim W$ for all $i$. Using the inequalities 
\[
e^{-x} \leq 1-x + \frac{x^2}{2}\quad \text{for} \quad x\geq 0 \quad \text{and} \quad e^x\geq 1+x \quad \text{for all} \quad x \in \R. 
\]
we obtain 
\begin{align}
\label{eq:lap}
\E{e^{-\theta W}} \leq \E{1-\theta W + \frac{\theta^2}{2} W^2} \leq \exp\left(-\theta \E{W}+ \frac{\theta^2}{2}\E{W^2} \right).
\end{align}
Combining \eqref{eq:cherb} and \eqref{eq:lap} we thus have that
\begin{align*}
\pr{\sum_{i=1}^{A/N}V_i<\frac{B}{N}} &\leq \exp\left(\frac{A}{N}\left(-\theta\E{W}+\frac{\theta^2}{2}\E{W^2} \right)+\theta \frac{B}{N} \right) \\
&= \exp\left( \frac{A\theta^2\E{W^2}}{2N}- \frac{A\theta  \E{W} \eta}{N}  \right),
\end{align*}
where $\eta=\delta/(2(1+\delta))$.
Setting $\theta = \frac{\eta \E{W}}{\E{W^2}}$, we deduce that
\[
\pr{\sum_{i=1}^{A/N}V_i<\frac{B}{N}} \leq \exp\left( -\frac{A\eta^2 \E{W}^2}{2N\E{W^2}} \right).
\]
From Lemma~\ref{cl:hit} and~Claim~\ref{cl:geom} we see that there exists a positive constant $c$ such that $\E{W}^2/\E{W^2} \geq c$.  This implies that there exists a positive constant $c'$ such that
\[
\pr{\sum_{i=1}^{A/N}V_i<\frac{B}{N}} \leq \exp\left( -\frac{c'A\delta^2}{N} \right).
\]
Since $A\asymp n^{\beta(d-2)}\log n$ by Lemma~\ref{lem:trRlem}, the above together with~\eqref{eq:unionbound} and~\eqref{eq:obtain} proves~\eqref{eq:neweq} and this completes the proof of the upper bound.

For the lower bound 
in the same way as above we have
\[
\pr{N_2>B'} \leq \pr{N_1>A'}   + \pr{N_2>B', N_1<A'}.
\]
For the first term we use Lemma~\ref{lem:n1}. For the second term we replace again this event by the event that in the first $A'$ excursions across the ``big'' annulus there were at least $B'$ excursions across the ``thin''  one.
Hence if $(W_i)_i$ are as before, setting $H=N^2\E{W}$ we have
\begin{align*}
\pr{N_2>B',N_1<A'}
&\leq \pr{\sum_{i=1}^{A'}W_i>B'}\\
& \leq N \max_x \prstart{\sum_{i=1}^{A'/N}W_{iN}>\frac{B'}{N} -\frac{H}{N} }{x} + \pr{\sum_{i=1}^{N-1}W_i>H}.
\end{align*}
From Lemma~\ref{cl:hit} we immediately get that 
\begin{align*}
\pr{\sum_{i=1}^{N-1}W_i>H} \leq \pr{\sum_{i=1}^{N-1}G_i>H},
\end{align*}
where $(G_i)_i$ are i.i.d.\ each having the law of the sum of $2d$ independent geometric random variables of success probability $n^{\phi-\beta}$. Using Claim~\ref{cl:geom} we then get that for a positive constant $c$ that
\[
\pr{\sum_{i=1}^{N-1}W_i>H} \lesssim e^{-cN}.
\]
Using the same coupling as before we obtain
\begin{align*}
\pr{\sum_{i=1}^{A'/N}W_{iN}>\frac{B'-H}{N}} \leq \pr{\sum_{i=1}^{A'/N}V_{i}>\frac{B'-H}{N}}  + \frac{A'}{N}e^{-cN},
\end{align*}
where the $(V_i)_i$ are i.i.d.\ and distributed according to the law of $W$.  By possibly decreasing the value of $c > 0$, the error term above is $\lesssim e^{-c N}$.
By Lemma~\ref{cl:hit} and Claim~\ref{cl:geom} we have for a positive constant $c_1$ that
\[
\E{e^{\theta W}} = 1+\theta \E{W} + \sum_{j=2}^{\infty}\frac{\theta^j \E{W^j}}{j!} \leq \exp\left( \theta\E{W} + \frac{(c_1 \theta \E{W})^2}{1-c_1\theta \E{W}} \right).
\]
Let $\eta=\delta/(2(1-\delta))$.  Using the above, Chernoff's inequality, and substituting the expression for $B'$ gives
\begin{align*}
&\pr{\sum_{i=1}^{A'/N}V_i >\frac{B'-H}{N}}\leq \E{e^{\theta W}}^{A'/N} e^{-\theta (B'-H)/N} \\
\leq& \exp\left(\frac{\theta H}{N} \right)\exp\left(\frac{A'}{N}\left( \theta\E{W} +\frac{(c_1 \theta \E{W})^2}{1-c_1\theta \E{W}} \right) \right) \exp\left( -\frac{A'\theta(1+\eta)\E{W}}{N} \right).
\end{align*}
Setting $\theta=c_2\eta/\E{W}$ for a positive constant $c_2$ to be determined and recalling that $H=N^2\E{W}$ we get
\begin{align*}
\pr{\sum_{i=1}^{A'/N}V_i >\frac{B'-H}{N}}&\leq \exp(c_2\eta N)\exp\left(-\frac{ c_2\eta^2A'}{N} \left(1-\frac{c_1^2 c_2}{1-c_1c_2\eta} \right)  \right).
\end{align*}
Using the assumption $\delta n^\psi\leq 1$ and taking $c_2 >0$ sufficiently small we get for a positive constant~$c'$ and all sufficiently large $n$ that
\begin{align*}
\pr{\sum_{i=1}^{A'/N}V_i >\frac{B'-H}{N}}\lesssim e^{-c'\delta^2 A'/N}
\end{align*}
and, since $A'\asymp n^{\beta(d-2)}\log n$ by Lemma~\ref{lem:trRlem}, this finishes the proof of the lemma.
\end{proof}

\begin{definition}\label{def:boxes}
\rm{
Fix $\phi,\beta \in (0,1)$.  Let $\ol{\SS}_\beta$ be a partition of $\Z_n^d$ into (disjoint) boxes of side length $n^\beta + n^\phi$ (we will suppress the dependency on $\phi$).  For each $\ol{B} \in \ol{\SS}_\beta$ we let $B$ (resp.\ $\ul{B}$) be the box of side length $n^\beta$ (resp.\ $n^\beta - n^\phi$) which is concentric with $\ol{B}$ and we let $\SS_\beta$ (resp.\ $\ul{\SS}_\beta$) be the collection of all such concentric boxes with this side length.  For each $z \in \cup_{B \in \SS_\beta} B$ we let $S_z$ be the element of $\SS_\beta$ which contains $z$ and $\ol{S}_z$ the element of~$\ol{\SS}_\beta$ which contains $z$.
We let $\AA = \Z_n^d\setminus\cup_{S \in \SS_\beta} \ul{S}$ be the collection of points of the torus that lie in the annuli between the boxes of side length~$n^\beta+n^\phi$ and the concentric boxes of side length~$n^\beta-n^\phi$.
}
\end{definition}

\begin{definition}\label{def:n3}
\rm{
Fix $\phi,\beta \in (0,1)$ and recall the definition of $\underline{E}$ from Lemma~\ref{lem:n2}. 
For every~$z\in \Z_n^d\setminus \AA$ and~$R>r$ we define $N_z(r,R,t)$ to be the number of excursions across the annulus $\B(z,R)\setminus\B(z,r)$ during the first $\underline{E}(t,\delta/4)$ excursions across the annulus $\ol{S}_z \setminus S_z$ where $S_z$ and $\ol{S}_z$ are as in Definition~\ref{def:boxes}.
}
\end{definition}

\begin{lemma}\label{lem:n3}
For each $\psi\in(0,1/2)$ and $\beta \in (0,1)$ there exist $n_0 \geq 1$ and a positive constant $c$ such that for all $n \geq n_0$ the following is true.  Let $n^\beta \geq R\geq 10r$ and $\delta\in(0,1/3)$ satisfy 
\[
\delta n^\psi \leq 1 \quad \text{and} \quad  \delta n^{\beta (d-2)-\psi-1/2} \leq 1.
\]
If $t \asymp n^d\log n$, then for all $z\in \Z_n^d\setminus \AA$ we have that 
\[
\pr{N_z(r,R,t) \notin [\underline{L}(t),\overline{L}(t)]} \lesssim n^\psi \exp\left(-c \delta^2 r^{d-2}n^{-\psi}\right) +    e^{-c n^\psi},
\]
where $\underline{L}(t)= \frac{t}{(1+\delta)\trb_{r,R}}$ and $\overline{L}(t)=\frac{t}{(1-\delta)\trb_{r,R}}$.
\end{lemma}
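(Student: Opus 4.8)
The plan is to identify $N_z(r,R,t)$, up to a negligible error, with the number $\nbb_z(r,R,s)$ of excursions that $X$ makes across the small annulus $\B(z,R)\setminus\B(z,r)$ during a suitable \emph{amount of time} $s$, to pin down $s$ using Lemma~\ref{lem:n2}, and then to control $\nbb_z(r,R,s)$ using Lemma~\ref{lem:n1} together with Remark~\ref{rem:samestatement}. Write $E=\underline{E}(t,\delta/4)$; this is the number of excursions across $\ol{S}_z\setminus S_z$ appearing in Definition~\ref{def:n3}, and since $S_z,\ol{S}_z$ are concentric boxes these are exactly the excursions counted by $\nsqsq_{y}(n^\beta,n^\beta+n^\phi,\cdot)$ with $y$ their common centre. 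Let $\sigma$ be the time at which $X$ completes the $E$-th of these. The first thing I would do is record two simplifications. One: I may assume $r^{d-2}\gtrsim n^{\psi}$, since otherwise $n^{\psi}\exp(-c\delta^2 r^{d-2}n^{-\psi})\gtrsim 1$ and the bound is vacuous; since $r\le n^\beta$ this forces $\beta(d-2)>\psi$ for all large $n$. Two: in this regime $\B(z,r)\subseteq S_z$, so $X$ performs no excursion across $\B(z,R)\setminus\B(z,r)$ before first hitting $\partial S_z$, whence $N_z(r,R,t)$ is the number of such excursions completed by time $\sigma$; unwinding the definition of $\nbb_z$, whose clock starts at the hitting time $\tau_0$ of $\partial\B(z,r)$, this gives
\[
N_z(r,R,t)=\nbb_z(r,R,\sigma-\tau_0)+O(1).
\]

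Next I would dispose of the hitting-time offsets. A standard tail bound for hitting times from stationarity on $\Z_n^d$ gives $\E{\tau_0}\asymp n^{d}r^{-(d-2)}\asymp\trb_{r,R}$ and, for the hitting time $\tau_0^{\mathrm{box}}$ of $\partial S_z$, $\E{\tau_0^{\mathrm{box}}}\asymp n^{d-\beta(d-2)}$, together with exponential concentration, so that $\pr{\tau_0>n^\psi\trb_{r,R}}\lesssim e^{-cn^\psi}$ and $\pr{\tau_0^{\mathrm{box}}>n^\psi n^{d-\beta(d-2)}}\lesssim e^{-cn^\psi}$. In the regime $r^{d-2}\gtrsim n^\psi$ both of these w.h.p.\ upper bounds are $\ll\delta t$; consequently $\sigma-\tau_0$ differs from $\sigma$, and $\sigma$ from $t$ or from the time $s_0$ introduced below, by $\ll\delta t$, which changes $\nbb_z(r,R,\cdot)$ by only $O(n^\psi)$ --- negligible next to the width $\asymp\delta r^{d-2}\log n\gg n^\psi$ of the target interval $[\underline{L}(t),\overline{L}(t)]$. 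Thus it remains to trap $\sigma$ between two times of order $t$ at which $\nbb_z(r,R,\cdot)$ lies in $[\underline{L}(t),\overline{L}(t)]$, a little extra room in the $\delta$'s absorbing the offsets.

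For the upper bound I would apply Lemma~\ref{lem:n2} at time $t$ with parameter $\delta/4$: with the stated probability $\nsqsq_y(n^\beta,n^\beta+n^\phi,t)\ge\underline{E}(t,\delta/4)=E$, i.e.\ the first $E$ box-excursions finish by wall-clock time $t+\tau_0^{\mathrm{box}}$, so that $\sigma\le t+\tau_0^{\mathrm{box}}$; combined with the offset bound and Lemma~\ref{lem:n1} with Remark~\ref{rem:samestatement} at this time (with a parameter that is a suitable small multiple of $\delta$), $\nbb_z(r,R,\sigma)\le t/((1-\delta)\trb_{r,R})=\overline{L}(t)$, hence $N_z(r,R,t)\le\overline{L}(t)$. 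For the lower bound I would set $s_0=\theta t$ with $\theta$ chosen so that
\[
\frac{1+\delta/8}{1+\delta}\ \le\ \theta\ <\ \frac{1-\delta/8}{1+\delta/4},
\]
which is possible because $(1+\tfrac\delta8)(1+\tfrac\delta4)<(1-\tfrac\delta8)(1+\delta)$ for all $\delta<\tfrac{16}{5}$, in particular for $\delta\in(0,1/3)$, and $\theta$ then sits in a fixed compact subinterval of $(0,1)$, so $s_0\asymp n^d\log n$. Applying Lemma~\ref{lem:n2} at time $s_0$ with parameter $\delta/8$ gives $\nsqsq_y(n^\beta,n^\beta+n^\phi,s_0)\le\overline{E}(s_0,\delta/8)$, which by the left-hand inequality is $<E$, so $\sigma\ge s_0$; and Lemma~\ref{lem:n1} with Remark~\ref{rem:samestatement} at time $s_0$ (again with a small multiple of $\delta$, absorbing $\tau_0$) gives $\nbb_z(r,R,\sigma-\tau_0)\ge s_0/((1+\delta/8)\trb_{r,R})\ge t/((1+\delta)\trb_{r,R})=\underline{L}(t)$ by the right-hand inequality. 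A union bound over these events, using that $r^{d-2}\le n^{\beta(d-2)}$ makes every error term produced by Lemmas~\ref{lem:n1} and~\ref{lem:n2} at most $\lesssim n^\psi e^{-c\delta^2 r^{d-2}n^{-\psi}}+e^{-cn^\psi}$, then completes the proof.

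The main obstacle I anticipate is precisely the bookkeeping in the first two paragraphs: quantifying the loss in passing between ``small-annulus excursions during the first $E$ box-excursions'' and $\nbb_z(r,R,\sigma-\tau_0)$, and between $\sigma$ and the clean times $t$ and $s_0$, and checking it is all $O(n^\psi)$ and hence swallowed once one restricts (as one may) to the non-trivial regime $r^{d-2}\gtrsim n^\psi$. The rest --- the arithmetic with the various small fractions of $\delta$ --- is elementary and entirely parallel to the proof of Lemma~\ref{lem:n2}, and it needs no probabilistic input beyond Lemmas~\ref{lem:n1} and~\ref{lem:n2} and a standard hitting-time tail estimate on $\Z_n^d$.
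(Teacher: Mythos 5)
Your proposal is correct and follows essentially the same route as the paper: sandwich the (random) completion time of the first $\underline{E}(t,\delta/4)$ box-excursions between deterministic times of order $t$ via Lemma~\ref{lem:n2}, then control the small-annulus excursion count at those deterministic times via Lemma~\ref{lem:n1} and Remark~\ref{rem:samestatement}, with the factors $\delta/4$, $\delta/8$, etc.\ chosen so the inequalities nest for $\delta<1/3$. The only difference is that you track the clock-offset and $O(1)$ discrepancies between $N_z$ and $\nbb_z$ explicitly (correctly disposing of them in the non-vacuous regime $\delta^2 r^{d-2}\gg n^\psi$), whereas the paper absorbs these silently into the monotonicity statement $\til{N}_z\le N_z$ on the event that the $E$-th box-excursion finishes after time $(1-\delta/2)t$.
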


\begin{proof}
We define $\til{N}_z$ to be the number of excursions across the annulus $\B(z,R) \setminus \B(z,r)$ up to time $(1-\delta/2)t$ and we let $T$ be the time it took for the $\underline{E}(t,\delta/4)$ excursions across the ``thin'' annulus $\SS(z,n^\beta+n^\phi)\setminus \SS(z,n^\beta)$ to complete. 
Notice that on the event $\{T\geq (1-\delta/2)t\}$ we have $\til{N}_z\leq N_z$ hence we get
\begin{align}\label{eq:nzlt}
\pr{N_z< \underline{L}(t)} \leq \pr{T<(1-\delta/2)t} + \pr{\til{N}_z< \underline{L}(t)}.
\end{align}
We recall the definition of $\underline{E}(t,\delta/4)$
\[
\underline{E}(t,\delta/4)=\frac{\E{W}t}{(1+\delta/4)\trs_{n^\beta,10n^\beta}}.
\]
The first probability on the right side of \eqref{eq:nzlt} can be written as 
\begin{align}\label{eq:txi}
\pr{T< (1-\delta/2)t} =  \pr{N_2>\underline{E}(t,\delta/4)}.
\end{align}
Let
\begin{align*}
\Gamma
=\frac{(1-\delta/2)\E{W}t}{(1-\delta/8)\trs_{n^\beta,10n^\beta}}
  =\frac{(1-\delta/2)(1+\delta/4)}{1-
\delta/8}  \underline{E}(t,\delta/4) <\underline{E}(t,\delta/4).
\end{align*}
Let $N_2 = \nsq_z(n^\beta,n^\beta+n^\phi,(1-\delta/2)t)$.
Applying Lemma~\ref{lem:n2} we get that
\begin{align}\label{eq:gamma}
\pr{N_2>\underline{E}(t,\delta/4)} \leq \pr{N_2>\Gamma} \lesssim  n^\psi \exp\left(-c \delta^2 n^{\beta(d-2)-\psi} \right) +  e^{-c n^\psi}.
\end{align}
For the second probability on the right side of \eqref{eq:nzlt}, we apply Lemma~\ref{lem:n1} to obtain for all $\delta\in(0,1/3)$ that
\begin{align}\label{eq:n3lt}
\pr{\til{N}_z < \underline{L}(t)} \leq  \pr{\til{N}_z<\frac{(1-\delta/2)t}{(1+\delta/3)\trb_{r,R}}}\lesssim
e^{-c \delta^2 r^{d-2}/n^\psi}+e^{-c n^\psi}.
\end{align}
Combining~\eqref{eq:nzlt}, \eqref{eq:txi}, and~\eqref{eq:n3lt} we deduce
\begin{align}\label{eq:nzfinal}
\pr{N_z< \underline{L}(t)} \lesssim n^\psi e^{-c \delta^2 r^{d-2}/n^\psi} +    e^{-c n^\psi}
\end{align}
and this finishes the proof of the first part.

We define $N_z'$ to be the number of excursions across $\B(z,R) \setminus \B(z,r)$ by time~$(1+\delta/2)t$. Let~$T$ be as in the first part of the proof.
Notice that on the event $\{T<(1+\delta/2)t\}$ we have $N_z'\geq N_z$. So
\begin{align}
\pr{N_z\geq \overline{L}(t)} \leq \pr{T\geq (1+\delta/2)t} +\pr{N_z'\geq \overline{L}(t)}. \label{eq:probnz}
\end{align}
By the definition of $T$ we have
\begin{align}
 \pr{T\geq (1+\delta/2)t}= \pr{N_2'\leq \underline{E}(t,\delta/4)}. \label{eq:probnz2}
\end{align}
Applying Lemma~\ref{lem:n2} we get that if 
\[
\Gamma = \frac{(1+\delta/2) t \E{W}}{(1+\delta/4)\trs_{n^\beta,10n^\beta}},
\]
then writing $N_2' = \nsq_z(n^\beta,n^\beta+n^\phi, (1+\delta/2)t)$ we have
\begin{align}
\pr{N_2'\leq \Gamma} \lesssim  n^\psi e^{-c \delta^2 n^{\beta(d-2)-\psi}} + e^{-c n^\psi}. \label{eq:probnz3}
\end{align}
It is now easy to see that for all $\delta >0$ we have $\Gamma>\underline{E}(t,\delta/4)$, and hence combining \eqref{eq:probnz2} and \eqref{eq:probnz3} we obtain the following bound for the first probability on the right side of \eqref{eq:probnz}:
\begin{align}
\pr{N_2' \leq \underline{E}(t)} \leq \pr{N_2' \leq \Gamma}
\lesssim  n^\psi e^{-c \delta^2 n^{\beta(d-2)-\psi}} + e^{-c n^\psi}. \label{eq:probnz4}
\end{align}
By Lemma~\ref{lem:n1} we can bound the second probability on the right side of \eqref{eq:probnz} by:
\begin{equation}
\label{eq:probnz5}
\pr{N_z'\geq \overline{L}(t)} \leq \pr{N_z'\geq \frac{(1+\delta/2)t}{(1-\delta/4)\trb_{r,R}}} \lesssim n^\psi e^{-c \delta^2 r^{d-2}/n^\psi} +  e^{-c n^\psi}.
\end{equation}
Inserting the bounds from \eqref{eq:probnz4} and \eqref{eq:probnz5} into \eqref{eq:probnz} concludes the proof.
\end{proof}

\section{Hitting probabilities}\label{sec:hitting}

In this section we collect some results about hitting probabilities of simple random walks in $\Z_n^d$ for $d\geq 3$. Some of the proofs are deferred to Appendix~\ref{app:elementary}.  We start by recalling Harnack's inequality (see e.g.~\cite[Theorem~6.3.8]{LawLim}). 

\begin{lemma}[Harnack's inequality]\label{lem:harnackineq}
Fix $R\geq 2r$ and let $f$ be a positive harmonic function on $\B(0,R) \subseteq \Z^d$. Then for all $x,y\in \B(0,r)$ we have
\begin{align*}
\frac{f(x)}{f(y)} = 1 + O\left(\frac{r}{R}\right).
\end{align*}
\end{lemma}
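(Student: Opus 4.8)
The plan is to use the Poisson-kernel (balayage) representation of positive harmonic functions on a ball. Write $\tau = \tau_{\B(0,R)}$ for the first exit time of $\B(0,R)$ and, for $z \in \partial\B(0,R)$, let $H_R(x,z) = \prstart{X(\tau) = z}{x}$ denote the discrete harmonic measure. Since $f$ is bounded on the finite set $\overline{\B(0,R)}$, optional stopping applied to the bounded martingale $f(X(t\wedge\tau))$ gives $f(x) = \estart{f(X(\tau))}{x} = \sum_{z \in \partial\B(0,R)} H_R(x,z)\,f(z)$ for every $x \in \B(0,R)$. Because $f \geq 0$, it then suffices to prove the pointwise comparison of harmonic measures
\[
H_R(x,z) = \Big(1 + O\big(\tfrac{r}{R}\big)\Big)\,H_R(y,z) \qquad \text{uniformly over } x,y \in \B(0,r) \text{ and } z \in \partial\B(0,R),
\]
with implicit constant depending only on $d$; multiplying by $f(z) \geq 0$ and summing then yields $f(x) = (1 + O(r/R))\,f(y)$, as desired.

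To establish the harmonic-measure comparison, I would fix $z \in \partial\B(0,R)$ and set $u(\cdot) = H_R(\cdot,z)$, which is nonnegative and harmonic on $\B(0,R)$, hence in particular on $\B(0,3R/4)$. First I would invoke the discrete gradient (difference) estimate for harmonic functions: if $u$ is harmonic on $\B(w,\rho)$ then $|u(w) - u(w')| \lesssim \frac{|w-w'|}{\rho}\,\sup_{\B(w,\rho)} u$ whenever $|w-w'| \leq \rho/2$. Applying this with $w = y$ and $\rho \asymp R$ (note $\B(y,R/4) \subseteq \B(0,3R/4)$ since $|y| \leq r \leq R/2$) and $w' = x$ gives $|u(x) - u(y)| \lesssim \frac{r}{R}\,\sup_{\B(0,3R/4)} u$. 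Next I would use the \emph{weak} Harnack inequality: there is $C_d < \infty$ such that $\sup_{\B(0,3R/4)} u \leq C_d\, u(y)$ for all $y \in \B(0,R/2)$. Combining the two gives $|u(x) - u(y)| \lesssim \frac{r}{R}\,u(y)$, which is precisely the comparison above.

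The main obstacle is the weak Harnack inequality for $u$. This is classical and comes from Green's-function estimates for simple random walk in $\Z^d$ with $d\geq3$: one writes $H_R(x,z) = \frac{1}{2d}\sum_{w \sim z,\, w \in \B(0,R)} G_R(x,w)$ by a last-exit decomposition, where $G_R$ is the Green's function of the walk killed upon leaving $\B(0,R)$, and uses $G_R(x,w) = G(x,w) - \estart{G(X(\tau),w)}{x}$ together with the asymptotics $G(a,b) = a_d\,|a-b|^{2-d}(1 + O(|a-b|^{-1}))$ for the full-space Green's function to deduce $G_R(x,w) \asymp G_R(0,w)$ uniformly over $|x| \leq R/2$ and $w$ at distance $O(1)$ from $\partial\B(0,R)$; summing over such $w$ gives $H_R(x,z) \asymp H_R(0,z)$, and covering $\B(0,3R/4)$ by finitely many balls of radius $R/4$ and chaining upgrades this to $\sup_{\B(0,3R/4)} u \lesssim u(y)$. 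The delicate point here is controlling the correction term $\estart{G(X(\tau),w)}{x}$ when $w$ lies near $\partial\B(0,R)$, where $G(x,w)$ and the correction are of comparable order; this is exactly the bookkeeping carried out in \cite{LawLim}, so in a write-up the cleanest option is simply to quote \cite[Theorem~6.3.8]{LawLim}, of which the stated lemma is a special case.
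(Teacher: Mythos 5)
Your proof is correct and follows essentially the same route as the paper's: both arguments rest on the discrete difference (gradient) estimate and the constant-factor Harnack inequality from Lawler--Limic (Theorems 6.3.8 and 6.3.9), combined to bound $|f(x)-f(y)|$ by $C\,(r/R)\,f(y)$. The only cosmetic differences are that your initial reduction to harmonic measure via the Poisson-kernel representation is an unnecessary detour (both estimates apply directly to $f$), and that the paper telescopes the nearest-neighbour difference estimate along a lattice path of length $\lesssim r$ inside $\B(0,r)$ rather than invoking the integrated form of the gradient bound.
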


\begin{proof}
See Appendix~\ref{app:elementary}.
\end{proof}

\begin{lemma}\label{lem:hittingprob}
There exists a constant $C_d > 0$ depending only on $d$ such that the following is true.  Let $n/4 \geq R\geq 2r$ such that both $r,R$ tend to infinity as $n\to \infty$ and let $z\in \Z_n^d$ with $\|z\| \leq r/4$. We denote by~$\tau_R$ the first hitting time of $\partial\B(0,R)$ and by $\tau_z$ the first hitting time of $z$. Then for all $x\in \partial\B(0,r)$ and all $y\in \partial\B(0,R)$ we have
\begin{align*}
\prcond{\tau_z<\tau_R}{X(\tau_R)=y}{x} = \frac{\crw}{r^{d-2}}\left(1+O\left(\frac{r}{R} \right) +O\left(\frac{1}{r^2} \right) +O\left( \frac{\norm{z}}{r} \right) \right).
\end{align*}
\end{lemma}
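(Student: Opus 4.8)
The plan is to condition on the first visit to $z$, reducing the quantity to the Green's function of simple random walk killed on exiting $\B(0,R)$ together with a harmonic-measure ratio that is controlled by Harnack's inequality. First I would apply the strong Markov property at $\tau_z$: on the event $\{\tau_z < \tau_R\}$ the exit point $X(\tau_R)$ has conditional law $\prstart{X(\tau_R)\in\cdot}{z}$, so
\[
\prcond{\tau_z < \tau_R}{X(\tau_R) = y}{x} \;=\; \prstart{\tau_z < \tau_R}{x}\cdot \frac{\prstart{X(\tau_R) = y}{z}}{\prstart{X(\tau_R) = y}{x}}.
\]
The map $w\mapsto \prstart{X(\tau_R)=y}{w}$, extended to $\partial\B(0,R)$ by $\1(w=y)$, is positive and harmonic on $\B(0,R)$; since $R\le n/4$ this ball is an isometric copy of the corresponding ball in $\Z^d$, so Lemma~\ref{lem:harnackineq} (applied with radii $r$ and $R\ge 2r$) shows the displayed ratio equals $1 + O(r/R)$, uniformly over $x,z\in\B(0,r)$ and $y\in\partial\B(0,R)$.

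It remains to estimate $\prstart{\tau_z < \tau_R}{x}$. Let $G_R$ denote the Green's function of the walk killed on exiting $\B(0,R)$ and $G$ the Green's function of simple random walk on $\Z^d$. The first-visit decomposition gives $\prstart{\tau_z<\tau_R}{x} = G_R(x,z)/G_R(z,z)$, and optional stopping applied to the martingale $G(X(k\wedge\tau_R)-z) + \sum_{j< k\wedge\tau_R}\1(X(j)=z)$ yields $G_R(a,z) = G(a-z) - \estart{G(X(\tau_R)-z)}{a}$ for $a\in\B(0,R)$. Now I would insert the asymptotics $G(w) = a_d\norm{w}^{2-d} + O(\norm{w}^{-d})$ from \cite{LawLim} (with $a_d$ a dimensional constant): since $\norm{x-z}\asymp r$ and every $w\in\partial\B(0,R)$ satisfies $\norm{w-z} = R(1+O(r/R))$ because $\norm{z}\le r/4\le R/8$, this gives
\[
G_R(x,z) = a_d\Big(\norm{x-z}^{2-d}\big(1+O(r^{-2})\big) - R^{2-d}\big(1+O(r/R)\big)\Big) = a_d\,\norm{x-z}^{2-d}\big(1+O(r^{-2})+O(r/R)\big),
\]
where the last step uses $(\norm{x-z}/R)^{d-2} = O((r/R)^{d-2}) = O(r/R)$, valid for $d\ge 3$ since $r/R\le\tfrac{1}{2}$. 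The same input gives $G_R(z,z) = G(0,0)\big(1 + O(R^{2-d})\big) = G(0,0)\big(1+O(r/R)\big)$, again because $R^{2-d}=O(r/R)$ when $R\ge 2r$ and $d\ge 3$. Finally the law of cosines together with the definition of $\partial\B(0,r)$ gives $\norm{x-z}^{2-d} = r^{2-d}\big(1 + O(\norm{z}/r)+O(r^{-2})\big)$, so dividing and collecting terms produces $\prstart{\tau_z<\tau_R}{x} = \crw\, r^{2-d}\big(1 + O(r/R) + O(r^{-2}) + O(\norm{z}/r)\big)$ with $\crw := a_d/G(0,0)$; multiplying back the Harnack factor $1+O(r/R)$ completes the proof.

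The real work is the uniform bookkeeping of the three error terms with the claimed precision. The point requiring genuine care is that the subtracted harmonic-measure contributions $\estart{G(X(\tau_R)-z)}{x}$ and $\estart{G(X(\tau_R)-z)}{z}$ are of order $R^{2-d}\asymp r^{2-d}(r/R)^{d-2}$, which is \emph{not} automatically negligible compared with the main term $\norm{x-z}^{2-d}\asymp r^{2-d}$ (for $R$ of order $r$ they are comparable); the reason they only feed into the $O(r/R)$ error is the elementary bound $(r/R)^{d-2}\le r/R$, which is precisely where the hypothesis $d\ge 3$ enters. The assumptions $R\le n/4$ and $r,R\to\infty$ are used to reduce to the $\Z^d$ Green's function and to license its asymptotic expansion with the stated error exponents.
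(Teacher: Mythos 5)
Your argument is correct and follows essentially the same route as the paper's proof in Appendix~A: the conditioning on $X(\tau_R)=y$ is handled by Bayes/strong Markov plus Harnack's inequality (Lemma~\ref{lem:harnackineq}) to give the $1+O(r/R)$ factor, and $\prstart{\tau_z<\tau_R}{x}$ is computed by optional stopping with the full-space Green's function and its asymptotic expansion from \cite{LawLim}. Your detour through the killed Green's function $G_R(x,z)/G_R(z,z)$ is just a repackaging of the paper's direct optional-stopping identity, and your error bookkeeping (in particular absorbing $(r/R)^{d-2}$ and $R^{2-d}$ into $O(r/R)$ using $d\geq 3$) matches the paper's.
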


\begin{proof}
See Appendix~\ref{app:elementary}.
\end{proof}

\begin{remark}\rm{The constant $\crw$ from the statement of Lemma~\ref{lem:hittingprob} is given by $c_d/G(0)$, where $c_d$ is the constant from~\cite[Theorem~4.3.1]{LawLim} and $G$ is the Green's function for simple random walk on $\Z^d$. That is, $G(0)$ is equal to the expected number of visits to $0$ made by simple random walk started from $0$ before escaping to~$\infty$.
}
\end{remark}

\begin{definition}\label{def:pddef}\rm{
We define $p_d$ to be the probability that a simple random walk on~$\Z^d$ started from $0$ returns to~$0$. 
}
\end{definition}

\begin{remark}\label{rem:pd}\rm{
For $d=3$, it is well-known (see e.g.~\cite{Spitzer}) that $p_3\approx 0.34$. It is also easy to see that $p_d\to 0$ as $d\to \infty$. Note that $p_d$ is equal to the probability that a simple random walk in~$\Z^d$ starting from~$0$ visits a given neighbour of~$0$ before escaping to~$\infty$. 
}
\end{remark}

\begin{lemma}\label{lem:2pointestimate}
Let $n/4 \geq R>2r \to \infty$ and $x,y\in \Z_n^d$ satisfying $\norm{x-y}=o(r)$. 
We denote by $\tau_R$ the first hitting time of~$\B(x,R)$ and by $\tau_x$ (resp.\ $\tau_y$) the first hitting time of~$x$ (resp.~$y$).
Then for all $a\in \partial\B(x,r)$ and all $b\in \partial\B(x,R)$  then we have
\begin{align}
\label{eq:xyafterR}
\prcond{\tau_x\wedge\tau_y<\tau_R}{X(\tau_R)=b}{a} &\geq \frac{2\crw}{(1+p_d)r^{d-2}}\left( 1 +o(1)+O\left(\frac{r}{R} \right) + O\left(\frac{1}{r^2} \right) \right).
\intertext{Moreover, if $x$ and $y$ are neighbours, then we have}
\label{eq:xyafterRneighbors}
\prcond{\tau_x\wedge\tau_y<\tau_R}{X(\tau_R)=b}{a} &=  \frac{2\crw}{(1+p_d)r^{d-2}}\left(1 + O\left( \frac{r}{R} \right) + O\left(\frac{1}{r^2} \right)\right).
\end{align}
\end{lemma}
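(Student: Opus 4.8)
The plan is to reduce the two-point statement to the one-point estimate of Lemma~\ref{lem:hittingprob} by inclusion--exclusion, and to handle the correction term $\prcond{\tau_x < \tau_R,\ \tau_y < \tau_R}{X(\tau_R)=b}{a}$ via a last-exit (or first-hit) decomposition together with the definition of $p_d$. Write $\tau_x \wedge \tau_y < \tau_R$ as the union of $\{\tau_x < \tau_R\}$ and $\{\tau_y < \tau_R\}$, so that
\[
\prcond{\tau_x\wedge\tau_y<\tau_R}{X(\tau_R)=b}{a}
= \prcond{\tau_x<\tau_R}{X(\tau_R)=b}{a}
+ \prcond{\tau_y<\tau_R}{X(\tau_R)=b}{a}
- \prcond{\tau_x<\tau_R,\ \tau_y<\tau_R}{X(\tau_R)=b}{a}.
\]
Since $\|x-y\| = o(r)$, the point $y$ lies well inside $\B(x,r/4)$ for large $n$ (after adjusting constants), so Lemma~\ref{lem:hittingprob} applies to both of the first two terms and each equals $\frac{\crw}{r^{d-2}}(1 + O(r/R) + O(1/r^2) + o(1))$, the $o(1)$ absorbing the $O(\|x-y\|/r)$ term for $y$ and being $0$ for $x$. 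This gives the leading contribution $\frac{2\crw}{r^{d-2}}(1+o(1)+O(r/R)+O(1/r^2))$, and it remains to control the last term.

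For the intersection term, I would condition on which of $x,y$ is hit first. On $\{\tau_x < \tau_y\}$, run the walk from its first visit to $x$; by the strong Markov property the conditional probability that it then hits $y$ before $\tau_R$ is, since $\|x-y\|=o(r)$ and $R$ is large, $(1+o(1))$ times the probability that a walk on $\Z^d$ started at $x$ hits $y$ before escaping to infinity — and this last probability is exactly $p_d^{\|x-y\|}$ in the case $\|x-y\|=1$ by Remark~\ref{rem:pd}, and more generally it is bounded above by $p_d$ (indeed tends to $0$ as $\|x-y\|\to\infty$ unless $x,y$ are neighbours). So up to $(1+o(1))$ factors,
\[
\prcond{\tau_x<\tau_R,\ \tau_y<\tau_R}{X(\tau_R)=b}{a}
\le p_d\Big(\prcond{\tau_x<\tau_R}{X(\tau_R)=b}{a} + \prcond{\tau_y<\tau_R}{X(\tau_R)=b}{a}\Big)(1+o(1))
= \frac{2\crw\, p_d}{r^{d-2}}(1+o(1)+O(r/R)+O(1/r^2)).
\]
Combining the three terms yields $\frac{2\crw}{(1+p_d)r^{d-2}}(1+o(1)+O(r/R)+O(1/r^2))$ after noting $2 - 2p_d \cdot \frac{1}{1} = \frac{2}{1+p_d}$ is \emph{not} literally the identity — rather one writes $2\crw/r^{d-2} - 2\crw p_d/r^{d-2} = 2\crw(1-p_d)/r^{d-2}$; here I realize one must be slightly more careful and instead use that the probability of hitting \emph{both} is $\ge$ (leading term)$\cdot p_d$, so that the union is $\le 2 - $ something; the correct bookkeeping, carried out properly, produces the factor $1/(1+p_d)$ because the events $\{\tau_x<\tau_R\}$ and $\{\tau_y<\tau_R\}$ overlap substantially. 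I would redo this step by writing $\prcond{\tau_x\wedge\tau_y<\tau_R}{\cdot}{a}$ directly as $\prcond{\tau_x<\tau_R}{\cdot}{a}/\,\mathbb{P}(\text{first of }x,y\text{ reached is }x \mid \text{reached one})$ and using that, conditioned on hitting the pair, the walk visits the other point with probability $(1+o(1))p_d^{\|x-y\|}$.

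For the equality \eqref{eq:xyafterRneighbors} when $x,y$ are neighbours, the same decomposition applies but now every $o(1)$ coming from $\|x-y\|/r$ is genuinely $O(1/r)$ which is absorbed into $O(1/r^2)$... no, into a fresh $O(r/R)+O(1/r^2)$ — more precisely, the error from replacing the torus/ball hitting probability of $y$ from $x$ by the $\Z^d$ quantity $p_d$ is $O(1/R)+O(1/r)$-type, and by a Harnack argument (Lemma~\ref{lem:harnackineq}) one checks these fold into the stated $O(r/R)+O(1/r^2)$. The sharp identity $\mathbb{P}(\text{walk from }x\text{ hits neighbour }y\text{ before }\infty)=p_d$ (Remark~\ref{rem:pd}) makes the correction term exactly $\frac{2\crw p_d}{r^{d-2}}(1+O(r/R)+O(1/r^2))$, and algebra gives the factor $2/(1+p_d)$.

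The main obstacle is the second step: getting the intersection probability with a clean \emph{two-sided} control — the lower bound in \eqref{eq:xyafterR} and the exact constant $\frac{2}{1+p_d}$ in \eqref{eq:xyafterRneighbors} — requires comparing the finite-torus (or finite-ball) event $\{\tau_y<\tau_R\}$ started near $x$ to the infinite-space return probability $p_d$ with an error that is uniformly $O(r/R)+O(1/r^2)$ over all entry points $a\in\partial\B(x,r)$ and exit points $b\in\partial\B(x,R)$; this is where Harnack's inequality and the hitting estimate of Lemma~\ref{lem:hittingprob} (applied at the appropriate scales) must be invoked carefully, and where one must be honest that the bound is an inequality in general and an equality only in the neighbour case.
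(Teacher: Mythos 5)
Your route is genuinely different from the paper's and is workable, so let me compare. You use inclusion--exclusion plus the strong Markov property at the first hit of $\{x,y\}$: writing $U$ for the target probability and $q=\crw r^{2-d}(1+\cdots)$ for each one-point probability from Lemma~\ref{lem:hittingprob}, the correct relation is $\pr{\text{both hit}}=U\cdot\pr{\text{hit the other}\mid\text{hit one}}=U\,p_d(1+o(1))$ in the neighbour case, so $U=2q-p_dU(1+o(1))$ and solving gives the factor $2/(1+p_d)$ — this is the identity you eventually state after the false start, and it does close the argument. The paper instead never decomposes the union: it sets $Z=\sum_{t\le\tau_R}\1(X(t)\in\{x,y\})$ and uses $\prstart{\tau_x\wedge\tau_y<\tau_R}{a}=\estart{Z}{a}/\escond{Z}{\tau_x\wedge\tau_y<\tau_R}{a}$; the numerator is $2\crw G(0)r^{2-d}(1+\cdots)$ by the Green's function asymptotics and the denominator is $G(0)+G(x-y)=G(0)(1+p_d)$ for neighbours (with $\le$ in general), which produces $1/(1+p_d)$ in one step with no self-consistent equation to solve. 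The paper's method is essentially a capacity computation and extends painlessly to larger well-separated clusters; yours is more elementary but requires solving for $U$ with all error terms tracked.

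Two caveats you should repair before this counts as a complete proof. First, the claim that the escape-free hitting probability of $y$ from $x$ is ``exactly $p_d^{\|x-y\|}$'' is false for $\|x-y\|\ge 2$ (it is $G(x-y)/G(0)\asymp\|x-y\|^{2-d}$); you only need that it equals $p_d$ for neighbours and is $\le p_d$ in general, both of which are true, so state that instead. Second, the strong Markov step in the intersection term is applied under the conditioning $\{X(\tau_R)=b\}$, which is not a stopping-time event; you must first convert the conditional probability to an unconditional one via Bayes and Harnack's inequality (exactly as the paper does in its opening line, picking up a harmless $1+O(r/R)$ factor), and only then decompose at $\tau_x\wedge\tau_y$. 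With those two points fixed, your argument yields both \eqref{eq:xyafterR} and \eqref{eq:xyafterRneighbors}.
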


\begin{proof}
By Bayes' formula we have
\begin{align*}
\prcond{\tau_x \wedge \tau_y < \tau_R}{X(\tau_R) = b}{a} &= \frac{\prcond{X_{\tau_R}=b}{\tau_x\wedge \tau_y<\tau_R}{a}}{\prstart{X_{\tau_R}=b}{a}} \prstart{\tau_x\wedge \tau_y<\tau_R}{a} \\
&= \left(1+O\left(\frac{r}{R}\right) \right) \prstart{\tau_x\wedge\tau_y<\tau_R}{a},
\end{align*}
where the second equality follows by Harnack's inequality (Lemma~\ref{lem:harnackineq}).
Let 
\[ Z = \sum_{t=0}^{\tau_R} \1(X(t) \in \{x,y\})\]
be the number of times that $X$ visits either $x$ or $y$ before hitting $\partial \B(x,R)$.  Then it is easy to see that
\begin{align*}
\prstart{\tau_x\wedge\tau_y<\tau_R}{a} = \frac{\estart{Z}{a}}{\escond{Z}{\tau_x\wedge\tau_y<\tau_R}{a}}.
\end{align*}
Note that we can write
\begin{align}\label{eq:repzwrite}
Z=\sum_{t=0}^{\infty} \1(X(t)\in \{x,y\}) - \sum_{t=\tau_R}^{\infty}\1(X(t)\in\{x,y\}).
\end{align}
Applying~\cite[Theorem~4.3.1]{LawLim} and the strong Markov property we thus have
\begin{align*}
\estart{Z}{a} = \frac{2\crw G(0)}{r^{d-2}}(1+o(1)) + O\left(\frac{1}{R^{d-2}} \right) + O\left( \frac{1}{r^d}\right),
\end{align*}
where the $o(1)$ term disappears when $x$ and $y$ are neighbours. 
For the denominator we have
\begin{align*}
\escond{Z}{\tau_x\wedge\tau_y<\tau_R}{a} =& \estart{\sum_{t=0}^{\tau_R}\1(X(t)\in\{x,y\})}{x} \prcond{\tau_x<\tau_y}{\tau_x\wedge\tau_y<\tau_R}{a} \\+& \estart{\sum_{t=0}^{\tau_R}\1(X(t)\in\{x,y\})}{y} \prcond{\tau_y<\tau_x}{\tau_x\wedge\tau_y<\tau_R}{a}.
\end{align*}

Consequently, using the representation for $Z$ from~\eqref{eq:repzwrite} it is easy to see by applying~\cite[Theorem~4.3.1]{LawLim} again and the last part of Remark~\ref{rem:pd} that
\begin{align*}
\escond{Z}{\tau_x\wedge\tau_y<\tau_R}{a} \leq G(0)(1 +p_d) +  O\left(\frac{1}{R^{d-2}}   \right) + O\left(\frac{1}{r^d}\right)
\end{align*}
with equality when $x$ and $y$ are neighbours. Putting everything together yields the result.
\end{proof}

\section{Separated points}\label{sec:separated}

In this section we define the time~$t_*$ referred to in the Introduction and we prove that with high probability at time~$\alpha t_*$ for $\alpha \in (0,1)$ large enough the points in the last visited set are at distance at least $n^\gamma$ for some $\gamma$ to be defined later. We prove these results in a certain setup which we describe below in order to make them compatible with the proofs of Theorems~\ref{thm:uniform} and~\ref{thm:exact}.

\textbf{Setup:} Let $\beta = \alpha - \epsilon$ for some $\epsilon>0$ small enough to be determined later.
As in Definition~\ref{def:boxes}, we divide the torus into boxes of side length $n^\beta+n^\phi$ with $\phi \in(0,\beta)$ and we will make use of the notation described there.  For every $S\in \SS_\beta$ we write~$\tau_{S}$ for the first time that the random walk has made~$\underline{E}(\alpha t_*,\delta/4)=\alpha t_*\E{W}/((1+\delta/4)\trs_{n^\beta,10n^\beta})$ excursions across the annulus surrounding~$S$, where $W$ is as in Definition~\ref{def:w} and~
\begin{align}\label{eq:defdeltafinal}
\delta =r^{(2-d)/2} n^\psi = n^{\phi(2-d)/\kappa + \psi} \quad \text{where} \quad \kappa = d\wedge 6
\end{align}
and $\psi>0$ is very small and will be fixed later. We will explain the choice of the value of~$\delta$ in Remark~\ref{rem:explanation} in  Section~\ref{sec:totalvar}.
 
We recall $\AA = \Z_n^d\setminus\cup_{S \in \SS_\beta} \ul{S}$ is the collection of points of the torus that lie in the annuli between the boxes of side length~$n^\beta+n^\phi$ and the concentric boxes of side length~$n^\beta-n^\phi$.
 
As in Definition~\ref{def:boxes}, for every $z\notin \AA$, we write $S_z \in \SS_\beta$ for the unique box in $\SS_\beta$ that contains $z$. 
We now consider the process~$Y=(Y_z)_{z}$ defined by $Y_z=\1(\tau_z>\tau_{\SS_z})$ for $z\notin \AA$ and $Y_z=0$ for $z\in \AA$.

For any $\zeta>0$ we define the collection of $\zeta$-separated subsets,
$\SS(\zeta)$ as follows
\begin{align}\label{eq:separdef}
\SS(\zeta)=\{ U\subseteq \Z_n^d: \ \norm{x-y} \geq n^{\zeta}, \ \forall x,y\in U\}.
\end{align}
We will now define the time $t_*$ that was introduced in the statement of Theorem~\ref{thm:uniform} (but not defined there). We set 
\begin{align}\label{eq:deftstar}
t_*= \frac{ \log(n^d) \trb_{n^{2\phi/\kappa},n^\phi}}{\prstart{\tau_z<\tau_{n^\phi}}{\pi}},
\end{align}
where $\phi$ is as above. The precise value of~$\phi$ and the radii in~\eqref{eq:deftstar} are selected to optimize several error terms in Claim~\ref{cl:annulusx} and equation~\eqref{eq:condphi} in Section~\ref{sec:totalvar} and it is explained in Remark~\ref{rem:explanation}.

Note that we write $\prstart{\tau_z<\tau_{n^\phi}}{\pi}$ for the probability that $z$ is hit in an excursion across the annulus $\B(z,n^\phi)\setminus\B(z,n^{2\phi/\kappa})$ when the random walk starts from the stationary distribution.  (Lemma~\ref{lem:hittingprob} gives an error bound which is independent of the starting point.)

The following lemma implies that $t_* = \tcov(1+o(1))$ and it is proved in Appendix~\ref{app:hitting}.

\begin{lemma}\label{lem:int}
For all $r,R\to \infty$ with $R=o(n)$ and $r=o(R)$ as~$n\to \infty$ we have
\[
\frac{\trb_{r,R}}{\pr{\tau_z<\tau_R}} = \tmax (1+o(1))\quad \text{as} \quad n\to \infty.
\]
\end{lemma}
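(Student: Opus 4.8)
The plan is to express both the numerator $\trb_{r,R}$ and the denominator $\pr{\tau_z<\tau_R}$ in terms of Green's function quantities and the maximal hitting time $\tmax$, and to show that the leading-order terms match. The key identity is that for a random walk started from the stationary distribution $\pi$ (which is uniform on $\Z_n^d$), the expected number of visits to a fixed point $z$ before a stopping time is controlled by $\tmax$; more precisely, I would use the well-known fact that $\estart{\tau_z}{\pi}$ is essentially $\tmax$ up to lower-order corrections on $\Z_n^d$ for $d\ge 3$ (this is the stationarity/Aldous–Fill type identity $\pi(z)\estart{\tau_z}{\pi} = Z_{zz}$ with $Z$ the fundamental matrix, and the fact that the fundamental matrix entries are $\asymp n^d$ with the diagonal dominating).

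The core computation is a renewal/excursion decomposition. Consider the excursions of $X$ across the annulus $\B(z,R)\setminus\B(z,r)$. Let $q = \pr{\tau_z < \tau_R}$ be the probability that a single such excursion (started from $\partial\B(z,R)$ according to the stationary exit distribution $\til\pi$ of Lemma~\ref{lem:coupling}, which by \cite[Lemma~6.3.7]{LawLim} is comparable to uniform on $\partial\B(z,R)$) hits $z$ before returning to $\partial\B(z,R)$. First I would argue that the starting-point dependence is negligible: by Lemma~\ref{lem:hittingprob} (or Harnack, Lemma~\ref{lem:harnackineq}), $\prstart{\tau_z<\tau_R}{y}$ for $y\in\partial\B(z,R)$ differs from $\pr{\tau_z<\tau_R}$ only by a multiplicative $(1+o(1))$, using $r=o(R)$, $R=o(n)$ and $r,R\to\infty$. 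Then the number of excursions needed until $z$ is first hit is geometric with parameter $q(1+o(1))$, so the expected time to hit $z$ starting from $\partial\B(z,R)$ is $\frac{1}{q}\trb_{r,R}(1+o(1))$, since each excursion has expected length $\trb_{r,R}(1+o(1))$ and the (at most one) partial final excursion that actually hits $z$ contributes a lower-order correction (its expected length is at most $\trb_{r,R}$, which is $o(n^d\log n)$, while the total is $\asymp n^d\log n \gg$... — actually I need to be careful here: I should instead note the hit time from $\partial\B(z,R)$ equals $\frac1q \trb_{r,R}$ exactly plus a bounded correction, or use Wald's identity on the excursion lengths and the geometric count, which are independent after mixing). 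Combining with the fact that starting from $\pi$ one reaches $\partial\B(z,R)$ in time $O(R^d) = o(n^d)$, which is negligible compared to $\estart{\tau_z}{\pi}\asymp n^d$, I get $\estart{\tau_z}{\pi} = \frac{1}{q}\trb_{r,R}(1+o(1))$.

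Finally I would invoke that $\estart{\tau_z}{\pi} = \tmax(1+o(1))$ for $\Z_n^d$, $d\ge 3$ — this follows from vertex-transitivity (so $\estart{\tau_z}{\pi}$ does not depend on $z$ and equals $\frac{1}{n^d}\sum_x \estart{\tau_z}{x}$) together with the standard estimate that for $d\ge 3$ the hitting time $\estart{\tau_z}{x}$ is $\asymp n^d$ for \emph{most} $x$ and in fact $\estart{\tau_z}{\pi}$ agrees with $\max_{x,y}\estart{\tau_y}{x} = \tmax$ to leading order (this is where I'd cite the Green's function asymptotics \cite[Theorem~4.3.1]{LawLim} / the relevant computation, or simply reference the known value $\tmax = G(0)n^d(1+o(1))$ and $\estart{\tau_z}{\pi} = G(0)n^d(1+o(1))$). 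Rearranging gives $\trb_{r,R}/\pr{\tau_z<\tau_R} = \tmax(1+o(1))$.

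The main obstacle is making the excursion/renewal argument quantitative: I need the error terms from the starting-point dependence of the hitting probability, from the Markov-chain mixing of the exit points (Lemma~\ref{lem:coupling}), and from the final partial excursion and the initial burn-in from $\pi$, all to be genuinely $o(1)$ \emph{relative to the leading term} uniformly over the allowed range of $r, R$ — in particular one must use $r\to\infty$ to kill the $1/r^{d-2}$-type relative errors in $q$ and $r=o(R)$, $R=o(n)$ to kill the geometric/annulus-width errors. The cleanest route is probably to avoid renewal bookkeeping entirely and instead write both $\trb_{r,R}$ and $\pr{\tau_z<\tau_R}$ directly via Green's functions on $\Z^d$ (using that $R=o(n)$ lets one transfer from the torus to $\Z^d$ up to $o(1)$), so that the ratio collapses algebraically to $G(0)n^d(1+o(1)) = \tmax(1+o(1))$.
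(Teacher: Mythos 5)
Your proposal follows essentially the same route as the paper: an excursion/renewal decomposition in which the number of excursions across $\B(z,R)\setminus\B(z,r)$ needed to hit $z$ is (approximately) geometric with parameter $\pr{\tau_z<\tau_R}$, each excursion has expected length $\trb_{r,R}(1+o(1))$, and the burn-in from the starting point contributes a negligible $O(n^2)$ or $O(n^d/R^{d-2})=o(n^d)$ correction. The only structural difference is the endgame: you pass through $\estart{\tau_z}{\pi}$ and then cite the identity $\estart{\tau_z}{\pi}=\tmax(1+o(1))$ for $\Z_n^d$, $d\geq 3$, whereas the paper shows directly that $\estart{\tau_x}{z}=\trb_{r,R}\,\prstart{\tau_x<\tau_R}{\pi}^{-1}(1+o(1))$ uniformly over all starting points $z$, so that the maximum (i.e.\ $\tmax$) equals the same quantity; both endgames are fine.

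The one point you flag as ``the main obstacle'' but do not resolve is precisely where the paper does its real work. In the Wald-type sum $\sum_i \escond{\zeta_i}{N\geq i}{z}\pr{N\geq i}$, the excursion length $\zeta_i$ is \emph{not} independent of the event $\{N\geq i\}$: conditioning on the previous excursions having missed $z$ biases the entry point on $\partial\B(z,R)$ of the $i$-th excursion, and one must show $\escond{\zeta_i}{N\geq i}{z}=\trb_{r,R}(1+o(1))$. The paper proves this via Proposition~\ref{prop:excursion_length_starting_point} (the expected excursion length depends on the starting point on $\partial\B(0,R)$ only up to a factor $1+O((R/n)^{c})$), which in turn rests on Lemmas~\ref{lem:exit_distribution_intermediate} and~\ref{lem::exit_distribution} showing that the conditioned exit distribution on a sphere of radius $Q=n/2$ forgets its starting point geometrically fast in the number of dyadic scales between $R$ and $Q$. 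So your outline is correct and matches the paper's, but a complete write-up would need to supply this decorrelation estimate (or the Green's-function route you sketch at the end, which the paper does not take).
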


\begin{lemma}\label{lem:hit}
For every $x \in \Z_n^d$ we have
\[
\pr{\tau_x>\alpha t_*} \lesssim n^{-\alpha d}
\]
\end{lemma}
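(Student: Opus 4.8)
The plan is to compare the hitting time $\tau_x$ for the walk on $\Z_n^d$ with the number of excursions the walk makes across a fixed annulus around $x$, and then use the excursion concentration estimates of Section~\ref{sec:excursions} together with the definition~\eqref{eq:deftstar} of $t_*$. Fix $x \in \Z_n^d$; by vertex transitivity we may take $x = 0$. Choose radii $r = n^{2\phi/\kappa}$ and $R = n^\phi$, matching the radii appearing in the definition of $t_*$, so that $\prstart{\tau_0 < \tau_R}{\pi}$ and $\trb_{r,R}$ are exactly the quantities in~\eqref{eq:deftstar}. The first step is to observe that if $X$ has not hit $0$ by time $\alpha t_*$, then in particular $0$ was avoided during each of the excursions across $\B(0,R) \setminus \B(0,r)$ that were completed by time $\alpha t_*$; hence
\[
\pr{\tau_0 > \alpha t_*} \leq \pr{\tau_0 > \alpha t_*, \ \nbb_0(r,R,\alpha t_*) \geq m} + \pr{\nbb_0(r,R,\alpha t_*) < m}
\]
for any threshold $m$.

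The second step is to choose $m$ to be slightly below the typical number of excursions. Since $\alpha t_* \asymp n^d \log n$, Lemma~\ref{lem:n1} (in the form given by Remark~\ref{rem:samestatement}, applied to $\nbb, \trb$) with $\delta = \delta(n)$ as in~\eqref{eq:defdeltafinal} gives that $\nbb_0(r,R,\alpha t_*) \geq \alpha t_*/((1+\delta)\trb_{r,R}) =: m$ with probability $1 - o(n^{-\alpha d})$; one checks that the error term $n^\psi e^{-c\delta^2 r^{d-2}/n^\psi} + e^{-cn^\psi}$ is stretched-exponentially small in a power of $n$ and hence negligible compared with $n^{-\alpha d}$, after fixing $\psi$ small. (This is the point where the specific choice of $\delta$ is used; the radii and $\delta$ are precisely engineered so this error beats $n^{-\alpha d}$.) For the remaining term, by the strong Markov property applied at the successive exit times of $\B(0,R)$, the probability that $0$ is avoided during each of $m$ consecutive excursions is at most $\big(1 - \prstart{\tau_0 < \tau_R}{\pi}(1+o(1))\big)^{m}$, where the $(1+o(1))$ — in fact a uniform multiplicative error independent of the entrance point — comes from Lemma~\ref{lem:hittingprob}/Harnack. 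Using $\log(1-u) \leq -u$ this is at most
\[
\exp\!\left(-m\,\prstart{\tau_0<\tau_R}{\pi}(1+o(1))\right)
= \exp\!\left(-\frac{\alpha t_* \,\prstart{\tau_0<\tau_R}{\pi}}{(1+\delta)\trb_{r,R}}(1+o(1))\right).
\]

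The third step is bookkeeping: by the definition~\eqref{eq:deftstar}, $t_* \,\prstart{\tau_0<\tau_R}{\pi} / \trb_{r,R} = \log(n^d) = \alpha^{-1}\cdot \alpha d \log n$, so the exponent above equals $-\alpha d \log n \cdot (1+\delta)^{-1}(1+o(1))$. Since $\delta = n^{\phi(2-d)/\kappa+\psi} \to 0$, this is $-\alpha d \log n (1 + o(1))$, giving a bound of $n^{-\alpha d(1+o(1))}$. To upgrade the $o(1)$ in the exponent to the clean bound $n^{-\alpha d}$ claimed in the lemma (up to a constant), one should track the sign of the $o(1)$ corrections: replace $m$ by a slightly larger threshold and absorb the lower-order terms, or equivalently note that for $n$ large the accumulated multiplicative slack is bounded, so $\pr{\tau_0 > \alpha t_*} \lesssim n^{-\alpha d}$. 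Combining the two terms completes the proof.

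The main obstacle is controlling the error terms carefully enough: the naive estimate only yields $n^{-\alpha d(1+o(1))}$, and getting the stated $\lesssim n^{-\alpha d}$ requires that (i) the excursion-count lower tail from Lemma~\ref{lem:n1} be genuinely smaller than $n^{-\alpha d}$ — which constrains how small $\psi$ must be relative to $\phi$, $\beta$, $d$ — and (ii) the Harnack-type $(1+o(1))$ in the per-excursion hitting probability, together with the $(1+\delta)^{-1}$ factor, not erode the exponent by more than a constant. Both are handled by choosing $\phi$ and $\psi$ appropriately small (the same choices forced elsewhere in the paper), so no genuinely new input is needed beyond Lemmas~\ref{lem:n1}, \ref{lem:hittingprob}, \ref{lem:harnackineq} and the definition of $t_*$; it is essentially a matter of assembling them with the right quantifiers.
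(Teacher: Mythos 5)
Your proposal is correct and follows essentially the same route as the paper: the same radii $r=n^{2\phi/\kappa}$, $R=n^\phi$, the same split into an excursion-count lower-tail event (handled by Lemma~\ref{lem:n1} with the $\delta$ of~\eqref{eq:defdeltafinal}) plus a per-excursion miss probability from Lemma~\ref{lem:hittingprob}, with the definition~\eqref{eq:deftstar} of $t_*$ cancelling the exponent; the paper makes your ``track the $o(1)$'' step explicit by observing that the corrections are $O(\delta\log n)+O(n^{-\phi(\kappa-2)/\kappa}\log n)=o(1)$ since both errors are polynomially small in $n$. The only cosmetic differences are that the paper conditions on the $\sigma$-algebra of excursion entrance/exit points to factor the miss probabilities (rather than iterating the strong Markov property directly) and starts the product at the second excursion, since the first may begin inside $\B(x,r)$.
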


\begin{proof}

Let $\phi$ be as in the definition of~$t_*$ and $r=n^{2\phi/\kappa}$, $R=n^\phi$ and $N_x$ be the number of excursions across the annulus $\B(x,R)\setminus\B(x,r)$ before time $\alpha t_*$. Let $A = \alpha t_*/((1+\delta)\trb_{n^{2\phi/\kappa},n^\phi})$ and $\psi\in(0,1/2)$ be as in Lemma~\ref{lem:n1} and $\delta$ as in~\eqref{eq:defdeltafinal}.
Writing ${\rm Exc}(x,i) = \{x\text{ not hit in the $i$-th excursion}\}$, we then have
\begin{align*}
\pr{\tau_x>\alpha t_*} &\leq \pr{\tau_x>\alpha t_*, N_x\geq A} + \pr{N_x<A}\\
 &\leq \pr{\bigcap_{i=2}^{A}{\rm Exc}(x,i)} + \pr{N_x<A}. 
\end{align*}
We took the lower index in the intersection to be $2$ rather than $1$, because the first excursion has a positive  chance of starting in~$\B(x,r)$, while the second does not.
Let $a_i = X(\tau_i)$ and $b_i=X(\sigma_i)$, where $\tau_i, \sigma_i$ are defined at the beginning of Section~\ref{sec:excursions}. Let $\F=\sigma(\{(a_i,b_i):i=1,\ldots,A\})$ be the $\sigma$-algebra generated by $a_i, b_i$. Notice that conditional on $\F$ the events ${\rm Exc}(x,i)$ are independent for $i=2,\ldots, A$. Writing $\tau_R$ for the first hitting time of $\partial\B(x,R)$ we therefore get
\begin{align*}
\pr{\bigcap_{i=2}^{A}{\rm Exc}(x,i)} = \E{\prod_{i=2}^{A}\prcond{{\rm Exc}(x,i)}{\F}{}} 
= \E{\prod_{i=2}^{A}\prcond{\tau_x<\tau_R}{X(\tau_R)=b_i}{a_i}}.
\end{align*}
From Lemma~\ref{lem:hittingprob} we immediately get for all $i\geq 2$ that
\begin{align*}
\prcond{\tau_x<\tau_R}{X(\tau_R)=b_i}{a_i}= 1 - \frac{\crw}{n^{2\phi(d-2)/\kappa}} \left(1 + O\left(\frac{1}{n^{\phi (\kappa-2)/\kappa}}\right)\right).
\end{align*}
Hence we deduce 
\begin{align*}
&\pr{\bigcap_{i=2}^{A}{\rm Exc}(x,i)} = \left( 1 - \frac{\crw}{n^{2\phi(d-2)/\kappa}} \left(1 +O\left(\frac{1}{n^{\phi (\kappa-2)/\kappa}}\right)\right)\right)^{A-1} \\
&\leq \exp\left(-(A-1)\frac{\crw}{n^{2\phi(d-2)/\kappa}} \left(1+O\left(\frac{1}{n^{\phi (\kappa-2)/\kappa}}\right)\right)   \right)\\
&\leq \exp\left(-\alpha d\log n \left( 1 + O(\delta) + O\left(\frac{1}{n^{\phi (\kappa-2)/\kappa}}\right)\right)   \right) \exp\left(O\left(n^{-2\phi(d-2)/\kappa
}\right)
\right)
\\&=
n^{-\alpha d} \left( 1+ O\left(\delta\log n + n^{-\phi(\kappa-2)/\kappa}\log n   \right)\right),
\end{align*}
where for the second inequality we used the expression for $A$ and $t_*$ and Lemma~\ref{lem:hittingprob}.
Recalling that $\delta=n^{-(d-2)\phi/\kappa + \psi}$ and $\psi\in(0,1/2)$ small enough we thus see that
\begin{align*}
\pr{\bigcap_{i=2}^{A}{\rm Exc}(x,i)} \lesssim n^{-\alpha d}.
\end{align*}
By Lemma~\ref{lem:n1} (since the choice of $\delta$ satisfies the assumptions) we get
\begin{align*}
\pr{N_x<A}  \lesssim n^{-\alpha d}
\end{align*} 
and this concludes the proof.
\end{proof}

\begin{lemma}
\label{lem:yhit}
Fix $0<\zeta<\phi$ and $c>0$. 
Let $U\in \SS(\zeta)$ with $|U| \leq  c$. Then we have
\[
\E{\prod_{u\in U} Y_u} \lesssim \frac{1}{n^{\alpha d |U| (1+o(1))}},
\]
where the constant in $\lesssim$ depends only on $c$. Moreover, for any $u\in \Z_n^d$ we have
\[
\pr{Y_u = 1} \lesssim \frac{1}{n^{\alpha d}}.
\]
\end{lemma}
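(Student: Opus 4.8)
The plan is to reduce the joint statement to a product of single-point estimates using the excursion decomposition described in the Setup, then to control each single-point factor via Lemma~\ref{lem:hittingprob} together with the concentration estimate of Lemma~\ref{lem:n3}. First I would treat the single-point bound $\pr{Y_u=1}\lesssim n^{-\alpha d}$, which is really just Lemma~\ref{lem:hit} with a bookkeeping adjustment: $Y_u=1$ means $u$ is not hit in the first $\underline E(\alpha t_*,\delta/4)$ excursions across $\ol S_u\setminus S_u$, and by Lemma~\ref{lem:n3} (applied with $r=n^{2\phi/\kappa}$, $R=n^\phi$ and $\delta$ as in~\eqref{eq:defdeltafinal}) the number $N_u(r,R,\alpha t_*)$ of excursions across the small spherical annulus $\B(u,n^\phi)\setminus\B(u,n^{2\phi/\kappa})$ is, with probability $1-O(n^{-\alpha d})$, at least $\underline L(\alpha t_*)=\alpha t_*/((1+\delta)\trb_{r,R})$. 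On that event, conditioning on the $\sigma$-algebra $\F$ generated by the entrance/exit points of those excursions makes the ``not hit in excursion $i$'' events independent, and Lemma~\ref{lem:hittingprob} gives that each such excursion (from the second on) hits $u$ with probability $\crw n^{-2\phi(d-2)/\kappa}(1+O(n^{-\phi(\kappa-2)/\kappa}))$, uniformly in the entrance and exit points. Multiplying over the $\underline L(\alpha t_*)-1$ excursions and using the definition~\eqref{eq:deftstar} of $t_*$ and Lemma~\ref{lem:int} exactly as in the proof of Lemma~\ref{lem:hit} gives $\pr{Y_u=1}\lesssim n^{-\alpha d}$.

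For the joint bound, fix $U\in\SS(\zeta)$ with $|U|\le c$. The key point is that since $\zeta<\phi$, the spherical annuli $\B(u,n^\phi)\setminus\B(u,n^{2\phi/\kappa})$ for distinct $u\in U$ need not be disjoint, but I can instead take annuli of radius $n^\zeta$ rather than $n^\phi$ — more precisely, apply Lemma~\ref{lem:n3} with $R=n^{\zeta/2}$ (say) and $r=n^{2\phi/\kappa}$, noting $\zeta<\phi$ forces $r<R$ only if $2\phi/\kappa<\zeta/2$, so one should instead just take $R$ a small power of $n$ below $n^\zeta$ and $r$ correspondingly smaller; the annuli around distinct points of $U$ are then pairwise disjoint since the points are $n^\zeta$-separated. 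Conditionally on the $\sigma$-algebra generated by the entrance and exit points of all these excursions across all the disjoint annuli simultaneously, the events $\{u\text{ not hit}\}$ for $u\in U$ become independent (because the excursions live in disjoint regions and each point is hit only inside its own annulus), so
\[
\E{\prod_{u\in U}Y_u}\le \prod_{u\in U}\Big(\text{single-point bound}\Big)+\big(\text{error from the bad event}\big).
\]
The single-point bound for each factor is $n^{-\alpha d(1+o(1))}$ by the argument of the first paragraph (the slight loss in the exponent of the annulus radius, from $n^\phi$ down to a smaller power, only affects the $O$-terms and hence the $o(1)$), and the bad event has probability $O(|U|\,n^{-\alpha d})$ by a union bound over Lemma~\ref{lem:n3}; since $|U|\le c$ and each factor is at most $n^{-\alpha d(1+o(1))}$, this error is absorbed into $n^{-\alpha d|U|(1+o(1))}$.

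The main obstacle I expect is the simultaneous conditioning on excursion data across many concentric-annulus systems at once: one has to argue carefully that the single $\sigma$-algebra $\F$ generated by all entrance/exit points (for the $\underline E$ outer excursions around each box $\ol S_u\setminus S_u$ and for the inner excursions across each small annulus) really does render the hitting events of the distinct $u\in U$ conditionally independent, and that the number of inner excursions around each $u$ can still be replaced by its deterministic lower bound $\underline L(\alpha t_*)$ with only an $O(n^{-\alpha d})$ failure probability per point. This is where the $\zeta$-separation hypothesis and the disjointness of the annuli are essential, and where one must be slightly careful that the chosen radii satisfy the hypotheses $n^\beta\ge R\ge 10r$ and the constraints on $\delta$ in Lemma~\ref{lem:n3}; once the geometry is set up so that everything is disjoint and inside the relevant box, the estimate is a product of $|U|$ copies of the Lemma~\ref{lem:hit} computation plus a union-bound error term.
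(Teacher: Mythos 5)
Your overall strategy is the same as the paper's: put disjoint annuli around each $u\in U$ with radii adapted to the $n^\zeta$-separation (the paper takes $r=n^{2\zeta/\kappa}$ and $R=\tfrac12 n^\zeta$, which is within the range you gesture at), condition on the $\sigma$-algebra generated by the entrance and exit points of the excursions across these annuli to get conditional independence of the hitting events over $u\in U$, apply Lemma~\ref{lem:hittingprob} excursion by excursion, and use Lemma~\ref{lem:n3} to replace the random excursion count by $\underline L(\alpha t_*)$. The $(1+o(1))$ in the exponent arises exactly as you say, from using annulus radii other than the ones appearing in the definition of $t_*$ (via Lemma~\ref{lem:int}), and the single-point bound avoids this loss by taking radii $n^{2\phi/\kappa}$ and $n^\phi$, matching Lemma~\ref{lem:hit}.

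There is, however, one step in your error analysis that does not close as written. You claim the bad event $\{\exists u: N_u\le \underline L(\alpha t_*)\}$ has probability $O(|U|\,n^{-\alpha d})$ and that this "is absorbed into $n^{-\alpha d|U|(1+o(1))}$." For $|U|\ge 2$ that absorption is false: $n^{-\alpha d}$ is vastly larger than $n^{-\alpha d|U|(1+o(1))}$, so an additive error of that size would destroy the product bound. The argument is saved only because Lemma~\ref{lem:n3}, with $\delta=n^{-(d-2)\zeta/\kappa+\psi}$ and $r=n^{2\zeta/\kappa}$, actually gives a failure probability of order $n^\psi e^{-cn^\psi}+e^{-cn^\psi}$, i.e.\ stretched-exponentially small, hence $o(n^{-\alpha d|U|})$ for any fixed bound on $|U|$. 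You need to quote this stronger bound (as the paper does via the decomposition $\E{\prod_u Y_u}\le \sum_u\pr{N_u<\underline L}+\E{\prod_u Y_u\1(N_u>\underline L)}$); the $O(n^{-\alpha d})$ version you state suffices only for the single-point claim.
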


Note that the final part of Lemma~\ref{lem:yhit} is not the same as Lemma~\ref{lem:hit}, because we consider the hitting probability after the random walk has made a certain number of excursions across $\ol{S}_x\setminus S_x$ rather than at time $\alpha t_*$.

\begin{proof}[Proof of Lemma~\ref{lem:yhit}]

Around every $u\in U$ we place two balls of radii $r=n^{2\zeta/\kappa}$ and $R=\tfrac{1}{2} n^\zeta$. We let~$N_u$ be the number of excursions across the annulus that is created by the two balls during the first~$\underline{E}(\alpha t_*,\delta/4)$ excursions across the ``thin'' annulus $\ol{S}_u\setminus S_u$, where $\underline{E}$ is as in Lemma~\ref{lem:n2} and we will set the value of $\delta$ later in the proof.
We then have
\begin{align}\label{eq:unionupbound}
\E{\prod_{u\in U}Y_u} \leq \sum_{u\in U} \pr{N_u<\underline{L}(\alpha t_*)} + \E{\prod_{u\in U} Y_u\1(N_u>\underline{L}(\alpha t_*))},
\end{align}
where $\underline{L}(t)$ is defined in the statement of Lemma~\ref{lem:n3}.
We let~$\F$ be the $\sigma$-algebra generated by $X(\tau_i(u))$ and $X(\sigma_i(u))$ for all $u\in U$, where $\tau_i(u)$ and $\sigma_i(u)$ are defined at the beginning of Section~\ref{sec:excursions} with respect to the annuli~$\B(u,R)\setminus\B(u,r)$. Writing ${\rm{Exc}}(u,i) = \{u\text{ not hit in the } i \text{-th excursion}\}$ we have
\begin{align*}
\E{\prod_{u\in U}Y_u\1(N_u>\underline{L}(\alpha t_*))} \leq \E{\prcond{\bigcap_{u\in U}\bigcap_{i=2}^{\underline{L}(\alpha t_*)}{\rm{Exc}}(u,i)}{\F}{}}.
\end{align*}
Given $\F$ the events $\cap_{i=2}^{\underline{L}(\alpha t_*)}{\rm Exc}(u,i)$
are independent over different $u\in U$, and hence
\begin{align}\label{eq:indep}
\E{\prod_{u\in U}Y_u\1(N_u>\underline{L}(\alpha t_*))} \leq \E{\prod_{u\in U}\prcond{\bigcap_{i=2}^{\underline{L}(\alpha t_*)}{\rm Exc}(u,i)}{\F}{}}.
\end{align}
By Lemma~\ref{lem:hittingprob} we have
\begin{align*}
 \prcond{\bigcap_{i=2}^{\underline{L}(\alpha t_*)}{\rm Exc}(u,i)}{\F}{}
= \left( 1- \frac{\crw}{n^{2(d-2)\zeta/\kappa}}\left( 1 + O\left(\frac{1}{n^{\zeta (\kappa-2)/\kappa}} \right)  \right) \right)^{\underline{L}(\alpha t_*)-1} \\
\leq \exp\left( -\frac{\crw (\underline{L}(\alpha t_*) -1)}{n^{2(d-2)\zeta/\kappa}}\left( 1 + O\left(\frac{1}{n^{\zeta(\kappa-2)/\kappa}} \right) \right)   \right).
\end{align*}
We now set~$\delta = n^{-(d-2)\zeta/\kappa+\psi}$ and $\psi\in(0,1/2)$ very small.
Using Lemma~\ref{lem:int} we get
\[
\underline{L}(\alpha t_*) = \frac{\alpha n^{2(d-2)\zeta/\kappa} \log(n^d)}{\crw(1+\delta)} (1+o(1)) = \frac{\alpha n^{2(d-2)\zeta/\kappa} \log(n^d)}{\crw} (1+o(1)).
\]
Substituting this expression for $\underline{L}(\alpha t_*)$ in the inequality above we deduce
\begin{align}\label{eq:upbound}
\begin{split}
\prcond{\bigcap_{i=2}^{\underline{L}(\alpha t_*)}{\rm Exc}(u,i)}{\F}{}
\leq \exp\left( -\alpha\log(n^d) (1+o(1)) \right) 
\lesssim \frac{1}{n^{\alpha d(1+o(1))}}.
\end{split}
\end{align}
Lemma~\ref{lem:n3} together with~\eqref{eq:unionupbound}, \eqref{eq:indep} and~\eqref{eq:upbound} give
\begin{align*}
\E{\prod_{u\in U} Y_u} \lesssim \frac{1}{n^{\alpha d|U|(1+o(1))}}. 
\end{align*}
Note that in the above argument if $U=\{u\}$, then we can place two balls of radii~$n^{2\phi/\kappa}$ and~$n^\phi$ around $u$ and hence we lose the $1+o(1)$ term in the expression for $\underline{L}$. Therefore we get
\[
\pr{Y_u=1} \lesssim \frac{1}{n^{\alpha d}}
\]
and this concludes the proof.
\end{proof}

\begin{lemma}\label{lem:2points}

Fix $0<\zeta<\phi$ and $c>0$. Let $U\notin \SS(\zeta)$ with $|U| \leq c$. Suppose that $U$ viewed as a subset of the graph which arises by adding edges between all of the vertices of $\Z_n^d$ at distance at most $n^\zeta$ consists of $f$ components. Then 
\begin{align*}
\E{\prod_{u\in U} Y_u} \lesssim n^{-\alpha d f - \alpha d (1- p_d)/(1+p_d)+o(1)}, 
\end{align*}
where $p_d$ is as in Definition~\ref{def:pddef} and the constant in $\lesssim$ depends only on $c$ and $d$.
\end{lemma}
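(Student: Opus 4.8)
The plan is to adapt the proof of Lemma~\ref{lem:yhit} to the case where $U$ is not $\zeta$-separated, replacing the one-point hitting estimate of Lemma~\ref{lem:hittingprob} with the two-point estimate of Lemma~\ref{lem:2point estimate} at the places where two points of $U$ are within distance~$n^\zeta$ of each other. First I would decompose $U$ into its $f$ connected components $U_1,\dots,U_f$ with respect to the graph obtained by joining vertices at distance at most~$n^\zeta$. Since $|U|\le c$, each component has bounded size; pick from each component a pair of points $\{x_j,y_j\}$ realizing an edge (so $\norm{x_j-y_j}\le n^\zeta$), and let $U' = \{x_1,y_1,\dots,x_f,y_f\}$. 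Crucially the pairs from distinct components are at distance much larger than $n^\zeta$ from one another, so $U'$ (viewed one representative pair per component) is ``well-separated at the scale of the surrounding annuli.''

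Next I would, exactly as in Lemma~\ref{lem:yhit}, place around each component (more precisely around each pair $\{x_j,y_j\}$) two concentric balls of radii $r=n^{2\zeta/\kappa}$ and $R=\tfrac12 n^\zeta$; because the components are far apart these annuli are disjoint, and the number $N_j$ of excursions each makes across its annulus during the first $\underline{E}(\alpha t_*,\delta/4)$ excursions across the thin annulus $\ol S\setminus S$ is controlled by Lemma~\ref{lem:n3} with $\delta = n^{-(d-2)\zeta/\kappa+\psi}$, $\psi$ small. Splitting off the low-probability event $\{N_j<\underline L(\alpha t_*)\}$ via a union bound as in~\eqref{eq:unionupbound}, I condition on the $\sigma$-algebra~$\F$ generated by the entrance and exit points of all these excursions. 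Conditionally on~$\F$ the events $\bigcap_i \{x_j,y_j\text{ not hit in }i\text{-th excursion across }j\text{-th annulus}\}$ are independent over $j$, so $\E{\prod_{u\in U}Y_u}$ is bounded (up to the negligible error terms) by the product over $j$ of $\prcond{\bigcap_{i=2}^{\underline L(\alpha t_*)} \{\text{neither }x_j\text{ nor }y_j\text{ hit}\}}{\F}{}$. Here I use $\prod_{u\in U}Y_u \le \prod_{j} Y_{x_j}Y_{y_j}$ since each $Y_u\in\{0,1\}$.

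Now for each $j$ I apply Lemma~\ref{lem:2point estimate}: since $\norm{x_j-y_j}\le n^\zeta = o(r)$ is false — wait, we need $\norm{x_j-y_j}=o(r)$, i.e. $n^\zeta = o(n^{2\zeta/\kappa})$, which fails. So instead one should take the inner radius $r$ comparable to the component's diameter, say $r = n^{\zeta}$ and $R$ a somewhat larger scale like $\tfrac12 n^{\phi}$ (the points of a single component are within distance $O(n^\zeta)=o(r)$ of each other after this choice), and then Lemma~\ref{lem:2point estimate}\eqref{eq:xyafterR} gives
\[
\prcond{\tau_{x_j}\wedge\tau_{y_j}<\tau_R}{X(\tau_R)=b}{a} \ge \frac{2\crw}{(1+p_d)r^{d-2}}\bigl(1+o(1)\bigr),
\]
so the complementary probability of missing both in one excursion is $1 - \tfrac{2\crw}{(1+p_d)r^{d-2}}(1+o(1))$. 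Raising to the power $\underline L(\alpha t_*)-1$ and using $\underline L(\alpha t_*)\asymp \tfrac{\alpha r^{d-2}\log(n^d)}{\crw}(1+o(1))$ (from Lemma~\ref{lem:int}, as in the computation following~\eqref{eq:upbound}) gives a bound of $\exp(-\tfrac{2\alpha}{1+p_d}\log(n^d)(1+o(1))) = n^{-2\alpha d/(1+p_d)+o(1)}$ for each component with $|U_j|\ge 2$, and $n^{-\alpha d(1+o(1))}$ for each singleton component (as in Lemma~\ref{lem:yhit}). Suppose $s$ of the $f$ components are singletons; then $|U| \ge s + 2(f-s)$, so $|U|-f \ge f-s$, i.e. $f-s \le |U|-f$, and the total exponent is
\[
-\alpha d\Bigl(s + \frac{2(f-s)}{1+p_d}\Bigr) + o(1) = -\alpha d\Bigl(f + (f-s)\frac{1-p_d}{1+p_d}\Bigr)+o(1) \le -\alpha d f - \alpha d\frac{1-p_d}{1+p_d}+o(1),
\]
using $f - s\ge 1$ since $U\notin\SS(\zeta)$ forces at least one non-singleton component. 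This is the claimed bound. The main obstacle, and the point requiring the most care, is choosing the radii $r,R$ of the concentric balls consistently so that (i) all points of a single component lie well inside the inner ball (hence the hypothesis $\norm{x_j-y_j}=o(r)$ of Lemma~\ref{lem:2point estimate} holds), (ii) the annuli around distinct components remain disjoint and the excursion-counting of Lemma~\ref{lem:n3} applies with an admissible $\delta$, and (iii) the resulting $\underline L(\alpha t_*)$ still yields the exponent $2\alpha d/(1+p_d)$; reconciling these against the constraint $r < R < n^\beta$ and $\zeta<\phi$ is the delicate bookkeeping, but it is the same style of argument as in Lemmas~\ref{lem:yhit} and~\ref{lem:n3} and introduces no new ideas.
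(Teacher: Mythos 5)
Your overall architecture (decompose into components, two\-/point estimate per non\-/singleton component, one\-/point estimate per singleton, conditional independence given the excursion data, and the final exponent accounting with $f-s\ge 1$) matches the paper's proof. But there is a genuine gap exactly at the point you flag as ``delicate bookkeeping'': the tension you identify is not resolvable by any single choice of radii, and your proposed fix $r=n^\zeta$, $R=\tfrac12 n^\phi$ fails on both counts. First, a pair $\{x_j,y_j\}$ in the same component may satisfy $\norm{x_j-y_j}$ of order $n^\zeta$ (that is all that membership in a component guarantees), and $O(n^\zeta)$ is not $o(n^\zeta)$, so the hypothesis $\norm{x_j-y_j}=o(r)$ of Lemma~\ref{lem:2pointestimate} is violated. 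Second, and more seriously, distinct components are only guaranteed to be at distance $>n^\zeta$ from each other, so with outer radius $\tfrac12 n^\phi \gg n^\zeta$ the annuli around distinct components can overlap; this destroys both the disjointness needed for the conditional independence given $\F$ and the applicability of the excursion counts from Lemma~\ref{lem:n3}. The constraints are genuinely contradictory for a pair at distance $\asymp n^\zeta$: the two-point estimate needs $r\gg\norm{x_j-y_j}\asymp n^\zeta$ while disjointness forces $R<\tfrac12 n^\zeta$.

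The paper resolves this with an additional idea you are missing: a dichotomy on the intra-pair distance. If $\norm{x_j-y_j}\le n^{\zeta/(10d)}$, one places the annulus with $r=n^{2\zeta/d}$ and $R=\tfrac12 n^\zeta$ (now $\norm{x_j-y_j}=o(r)$ and $R$ is below half the inter-component separation) and applies~\eqref{eq:xyafterR} to get $n^{-2\alpha d/(1+p_d)+o(1)}$ for that pair. If instead $n^{\zeta/(10d)}<\norm{x_j-y_j}\le n^{\zeta}$, one abandons the two-point estimate entirely and places two \emph{separate} small annuli, of radii $n^{\zeta/(15d)}$ and $\tfrac12 n^{\zeta/(10d)}$, around $x_j$ and around $y_j$ (disjoint because the pair is at distance $>n^{\zeta/(10d)}$), obtaining the even stronger bound $n^{-2\alpha d+o(1)}$ for that pair from the one-point estimate applied twice. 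Writing $k$ for the number of pairs in the first case, the product $n^{-\alpha d a}\cdot n^{-2\alpha d k/(1+p_d)}\cdot n^{-2\alpha d(b-k)}$ is then bounded as you do. Without this case split the argument does not close, so the missing step is a substantive idea rather than bookkeeping.
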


\begin{proof}
First we decompose $U$ into its $f$ connected components, i.e.\ every component contains points that are within distance $n^\zeta$ from some point of the same component. If two points belong to different components, then their distance is at least $n^\zeta$. Let $a$ be the number of components $(A_i)$ containing exactly one point and let $b$ be the number of components $(A'_i)$ containing at least two points. Since $U\notin \SS(\zeta)$, it follows that $b\geq 1$. For $i=1,\ldots, a$ we let $Y_{1,i}= \1(\tau_{a_i}>\tau_{S_{a_i}})$, where $A_i=\{a_i\}$. For $i=1,\ldots, b$ we pick $x_i, y_i \in A_i'$ distinct such that $\|x_i-y_i\|\leq n^\zeta$ and we set $Y_{2,i} = \1(\tau_{x_i},\tau_{y_i}>\tau_{S_{x_i}})$. Note that for $\zeta>0$ small enough $S_{x_i}=S_{y_i}$.
Let $k=\sum_{i=1}^{b}\1\left( \|x_i-y_i\|\leq n^{\zeta/(10d)} \right)$. 

For $j=1,\ldots, b$ we place two balls centered at each $x_j$ satisfying $\|x_j-y_j\|\leq n^{\zeta/(10d)}$ of radii~$n^{2\zeta/d}$ and~$n^\zeta/2$. For each $j$ not satisfying the above condition we place two balls around $x_j$ of radii $n^{\zeta/(15d)}$ and $n^{\zeta/(10d)}/2$. We also place two balls of the same radii around the corresponding $y_j$. As in Lemma~\ref{lem:n3} we denote by $N_u= N_u(n^{2\zeta/d},n^{\zeta}/2,\alpha t_*)$ and $N_u'=N_u(n^{\zeta/(15d)},n^{\zeta/(10d)}/2,\alpha t_*)$ for $u\in 
U$. By conditioning on the events $\{N_u>\underline{L}(\alpha t_*)\}$ and $\{N_u' >\underline{L}(\alpha t_*)\}$ depending on the radii of the balls that we placed around $u$ and using~\eqref{eq:xyafterR} in the case when $\|x_i-y_i\|\leq n^{\zeta/(10d)}$ we get exactly in the same way as in the proof of Lemma~\ref{lem:yhit} that 
\begin{align*}
\E{\prod_{u\in U}Y_u} \leq \E{\prod_{i=1}^{a} Y_{1,i}\cdot  \prod_{j=1}^{b}Y_{2,j}} \leq n^{-\alpha d a}\cdot n^{-2\alpha d k/(1+p_d)} \cdot n^{-2\alpha d (b-k)} \cdot n^{o(1)}.
\end{align*}
Since $k\leq b$, $a+b =f$, and $b\geq 1$ from the above we deduce
\begin{align*}
\E{\prod_{u\in U} Y_u}\leq n^{-\alpha d f -  \alpha d( 1- p_d)/(p_d+1) + o(1)},
\end{align*}
 and this finishes the proof.
\end{proof}

\begin{proposition}\label{pro:separated}
Fix $\alpha>(1+p_d)/2$,  $0<\gamma<2\alpha -1$  and let
\[
Z _\gamma= \sum_{x,y: \norm{x-y}\leq n^\gamma} \1(\tau_x >\tau_{S_x})\1(\tau_y>\tau_{S_y}).
\]
Then $\E{Z_\gamma} = o(1)$ as $n\to \infty$. 
\end{proposition}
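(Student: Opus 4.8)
The plan is to bound $\E{Z_\gamma}$ by a first moment computation, splitting the sum according to whether the pair $\{x,y\}$ lies in a common box or not, and then applying Lemmas~\ref{lem:yhit} and~\ref{lem:2points} to the relevant expectations. First I would note that $Z_\gamma = \sum_{x,y:\,\norm{x-y}\leq n^\gamma} Y_x Y_y$ exactly when both $x,y\notin\AA$ and $S_x=S_y$; when $x$ or $y$ lies in $\AA$ the corresponding $Y$ vanishes by definition, and when $x,y$ are in distinct boxes the variables $\1(\tau_x>\tau_{S_x})$ are still what appears, so in all cases $\1(\tau_x>\tau_{S_x})\1(\tau_y>\tau_{S_y}) \leq Y_x Y_y$ after discarding the $\AA$ terms (which contribute zero). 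Hence it suffices to bound $\sum_{x,y:\,\norm{x-y}\leq n^\gamma}\E{Y_x Y_y}$, where the sum is over ordered pairs of points in $\Z_n^d\setminus\AA$.

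Next I would apply the estimates with the separation parameter $\zeta$ chosen small, say $\zeta = \gamma$ (or some fixed $\zeta\in(\gamma,\phi)$ depending on how the constants line up; note $\gamma < 2\alpha - 1 < 1$ and we may also shrink $\gamma$ so that $\gamma<\phi$). For a pair with $x=y$ — or more precisely for the diagonal-type contribution where $U=\{x\}$ is a single point — Lemma~\ref{lem:yhit} gives $\pr{Y_x=1}\lesssim n^{-\alpha d}$, and summing over the $n^d$ points gives $n^{d-\alpha d}$; but actually we want genuinely distinct $x\neq y$ with $\norm{x-y}\leq n^\gamma$. Such a pair $U=\{x,y\}$ is \emph{not} $\zeta$-separated (it consists of $f=1$ component in the graph obtained by joining points at distance $\leq n^\zeta$), so Lemma~\ref{lem:2points} applies and yields
\[
\E{Y_x Y_y} \lesssim n^{-\alpha d - \alpha d(1-p_d)/(1+p_d) + o(1)}.
\]
The number of ordered pairs $(x,y)$ with $x\neq y$ and $\norm{x-y}\leq n^\gamma$ is $\asymp n^d \cdot n^{\gamma d}$. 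Therefore
\[
\sum_{\substack{x\neq y\\ \norm{x-y}\leq n^\gamma}} \E{Y_x Y_y} \lesssim n^{d + \gamma d - \alpha d - \alpha d(1-p_d)/(1+p_d) + o(1)}.
\]

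Finally I would check that the exponent is negative. Writing the exponent (divided by $d$) as $1 + \gamma - \alpha - \alpha(1-p_d)/(1+p_d) = 1 + \gamma - \alpha\bigl(1 + \tfrac{1-p_d}{1+p_d}\bigr) = 1 + \gamma - \tfrac{2\alpha}{1+p_d}$, we need $\gamma < \tfrac{2\alpha}{1+p_d} - 1 = \tfrac{2\alpha - (1+p_d)}{1+p_d}$. Since the hypothesis is $\alpha > (1+p_d)/2$, the right-hand side is positive, and the hypothesis $\gamma < 2\alpha - 1$ is slightly stronger than what is literally needed but certainly $\gamma$ can be taken below $\tfrac{2\alpha-1-p_d}{1+p_d}$ when it is below $2\alpha - 1$ only if... — here I would be careful: in fact one wants $\gamma < \tfrac{2\alpha - 1 - p_d}{1+p_d}$, which for $p_d>0$ is smaller than $2\alpha-1$, so strictly speaking the proposition as stated should be read with $\gamma$ small enough, or the bound on the number of pairs should be refined. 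The main obstacle — really the only subtle point — is bookkeeping this exponent correctly and confirming it matches the claimed threshold; everything else is a routine union bound plus a direct count of nearby pairs, with the hard analytic input (the two-point hitting estimate feeding into Lemma~\ref{lem:2points}) already done. Once the exponent is seen to be strictly negative, $\E{Z_\gamma}\to 0$ follows, absorbing the $n^{o(1)}$ factor.
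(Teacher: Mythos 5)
Your argument has a genuine gap: applying Lemma~\ref{lem:2points} with $f=1$ to \emph{every} pair at distance at most $n^\gamma$ only yields the bound $n^{d+\gamma d-2\alpha d/(1+p_d)+o(1)}$, which is $o(1)$ only for $\gamma<2\alpha/(1+p_d)-1$. You noticed this at the end but drew the wrong conclusion — the proposition is correct for the full range $\gamma<2\alpha-1$, and that full range is actually needed later (the proof of Theorem~\ref{thm:uniform} invokes the proposition with $\gamma=2\alpha-1-2\epsilon$, which exceeds $2\alpha/(1+p_d)-1$ for small $\epsilon$). The issue is that the two-point estimate carrying the $(1+p_d)^{-1}$ correction is the \emph{worst case}, relevant only when $x$ and $y$ are genuinely close; using it for all pairs up to distance $n^\gamma$ is too lossy.

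The missing idea is a two-scale decomposition. Introduce an auxiliary small parameter $\zeta\in(0,\phi)$ and split the sum into pairs with $\norm{x-y}\leq n^\zeta$ and pairs with $n^\zeta\leq\norm{x-y}\leq n^\gamma$. For the first group, Lemma~\ref{lem:2points} with $f=1$ gives $n^{-2\alpha d/(1+p_d)+o(1)}$ per pair, but now the pair count is only $\lesssim n^{d+d\zeta}$, so choosing $\zeta<2\alpha/(1+p_d)-1$ (possible since $\alpha>(1+p_d)/2$ makes the right side positive) kills this contribution. For the second group, the pair $\{x,y\}$ \emph{is} $\zeta$-separated, so Lemma~\ref{lem:yhit} with $|U|=2$ applies and gives the much better bound $n^{-2\alpha d(1+o(1))}$ per pair; against the count $\lesssim n^{d+d\gamma}$ this is $o(1)$ precisely when $\gamma<2\alpha-1$. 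Everything else in your write-up (reduction to $\E{Y_xY_y}$, discarding points in $\AA$, the exponent bookkeeping) is fine.
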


\begin{remark}\rm{
We will show in the proof of the lower bound of Theorem~\ref{thm:uniform} that the threshold $(1+p_d)/2$ is sharp: for $\alpha\in (0,(1+p_d)/2)$ the random variable $Z_\gamma$ from the statement of Proposition~\ref{pro:separated} tends to~$\infty$ almost surely for any $\gamma>0$.
}
\end{remark}

\begin{proof}[Proof of Proposition~\ref{pro:separated}]

For $0<\zeta<\phi$ to be determined shortly we write 
\[
Z_\gamma= \sum_{x,y: \norm{x-y}\leq n^\zeta}\1(\tau_x>\tau_{S_x})\1(\tau_y >\tau_{S_y}) + \sum_{x,y: n^\zeta\leq \norm{x-y}\leq n^\gamma} \1(\tau_x>\tau_{S_x})\1(\tau_y >\tau_{S_y}).
\]
From Lemma~\ref{lem:2points} with $f=1$ we get
\begin{align*}
\E{\sum_{x,y: \norm{x-y}\leq n^\zeta}\1(\tau_x>\tau_{S_x}) \1(\tau_y>\tau_{S_y}) } \lesssim n^{d+d\zeta}\cdot n^{-2\alpha d/(p_d+1) + o(1)}.
\end{align*}
Hence for $\zeta<2\alpha/(p_d+1) - 1$ we get that the above upper bound is $o(1)$ as $n\to \infty$. From Lemma~\ref{lem:yhit} with $|U|=2$ we get
\begin{align*}
\E{\sum_{x,y: n^\zeta\leq \norm{x-y}\leq n^\gamma} \1(\tau_x >\tau_{S_x}) \1(\tau_{S_y}>\tau_{S_y})} \lesssim n^{d+d\gamma} \cdot n^{-2\alpha d(1+o(1))}.
\end{align*}
Therefore taking $\gamma<2\alpha-1$ we conclude that $\E{Z_\gamma} =o(1)$ as $n\to \infty$ and this completes the proof.
\end{proof}

\section{Total variation distance}\label{sec:totalvar}

In this section we give the proof of Theorem~\ref{thm:uniform}. As mentioned in Section~\ref{subsec:strategy} we will proceed by using the concentration estimates from Section~\ref{sec:excursions} to reduce the problem to proving the uniformity of the last visited set in each box in an appropriately chosen partition of~$\Z_n^d$. In order to establish the latter we will use the general strategy employed in the proof of~\cite[Theorem~6]{Prata_thesis}.

Let $t_*$ be as defined in~\eqref{eq:deftstar} in Section~\ref{sec:separated}. Let $Q=(Q_z)$ where $Q_z = \1(\tau_z > \alpha t_*)$ and $Z=(Z_z)$, where $Z_z$ are i.i.d.\ Bernoulli random variables of parameter $n^{-\alpha d}$. Recall the definition of~$\AA$, the process $Y$ and the collection of boxes $\SS_\beta$, where $\beta=\alpha-\epsilon$, defined in the setup subsection at the beginning of  Section~\ref{sec:separated} and in Definition~\ref{def:boxes}.
We define $\til{Q}$ by setting $\til{Q}_z = 0$ for all $z \in \AA$ and $\til{Q}_z = Q_z$ for $z\notin \AA$. We also define $\til{Z}$ by setting $\til{Z}_z = 0$ for $z \in \AA$ and $\til{Z}_z=Z_z$ for $z\notin \AA$.

\begin{claim}\label{cl:annulusx}
If $\alpha$, $\phi$ and $\epsilon$ satisfy
$d-(d+1)\alpha+\epsilon + \phi<0$, then we have as $n\to \infty$
\begin{align*}
\|\LL(Q) - \LL(\til{Q})\|_{\rm{TV}} = o(1) \quad \text{ and } \quad \|\LL(Z) -\LL(\til{Z})\|_{\rm{TV}} = o(1).
\end{align*}
\end{claim}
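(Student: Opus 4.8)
The plan is to exploit that $Q$ and $\til{Q}$ (and likewise $Z$ and $\til{Z}$) are defined on a common probability space and agree off the region $\AA$. Since $\til{Q}_z = Q_z$ for $z \notin \AA$ while $\til{Q}_z = 0 \le Q_z$ for $z \in \AA$, we have the identity
\[
\{Q \ne \til{Q}\} = \{\exists\, z \in \AA : \tau_z > \alpha t_*\} = \{\AA \cap \U(\alpha t_*) \ne \emptyset\},
\]
and similarly $\{Z \ne \til{Z}\} = \{\exists\, z \in \AA : Z_z = 1\}$. Because the total variation distance between the laws of two random variables carried by the same space is bounded by the probability that they disagree, it suffices to show that each of these two events has probability $o(1)$.

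First I would bound $\pr{\AA \cap \U(\alpha t_*) \ne \emptyset}$ by a union bound combined with Lemma~\ref{lem:hit}, which gives $\pr{\tau_z > \alpha t_*} \lesssim n^{-\alpha d}$ uniformly in $z$, so that
\[
\pr{\AA \cap \U(\alpha t_*) \ne \emptyset} \le \sum_{z \in \AA} \pr{\tau_z > \alpha t_*} \lesssim |\AA|\, n^{-\alpha d}.
\]
Next I would use the volume estimate $|\AA| \asymp n^{d-\beta+\phi}$ from the spatial decomposition (there are $\asymp n^{d(1-\beta)}$ boxes of side length $n^\beta+n^\phi$, each contributing a shell $\ol{S}\setminus\ul{S}$ of volume $\asymp (n^\beta)^{d-1} n^\phi$). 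With $\beta = \alpha - \epsilon$ this yields
\[
\pr{\AA \cap \U(\alpha t_*) \ne \emptyset} \lesssim n^{d - \beta + \phi - \alpha d} = n^{d - (d+1)\alpha + \epsilon + \phi},
\]
which is $o(1)$ precisely under the hypothesis $d - (d+1)\alpha + \epsilon + \phi < 0$. For the second statement the same union bound over $\AA$, now using $\pr{Z_z = 1} = n^{-\alpha d}$, gives $\pr{Z \ne \til{Z}} \le |\AA|\, n^{-\alpha d}$, which is $o(1)$ by the identical computation.

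I do not expect a genuine obstacle here: the argument is essentially the elementary coupling bound for total variation distance together with the counting estimate for $|\AA|$, and the only nontrivial input is Lemma~\ref{lem:hit}. The one point requiring care is matching the exponent $d - \beta + \phi - \alpha d$ coming from $|\AA|\, n^{-\alpha d}$ to the stated condition after substituting $\beta = \alpha - \epsilon$, which is exactly where the hypothesis $d - (d+1)\alpha + \epsilon + \phi < 0$ enters.
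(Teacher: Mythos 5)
Your argument is correct and coincides with the paper's own proof: both use the coupling bound $\|\LL(Q)-\LL(\til Q)\|_{\rm TV}\le\pr{\exists\,z\in\AA:Q_z=1}$, a union bound over $\AA$ with Lemma~\ref{lem:hit}, the volume estimate $|\AA|\asymp n^{d-d\beta}\cdot n^{(d-1)\beta+\phi}=n^{d-\beta+\phi}$, and the substitution $\beta=\alpha-\epsilon$ to arrive at the exponent $d-(d+1)\alpha+\epsilon+\phi$. The treatment of $Z$ versus $\til Z$ via $\pr{Z_z=1}=n^{-\alpha d}$ is likewise exactly what the paper does.
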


\begin{proof}
Using the obvious coupling between $Q$ and $\til{Q}$ we get
\begin{align*}
\|\LL(Q) - \LL(\til{Q})\|_{\rm{TV}} \leq \pr{\exists z\in \AA: Q_z=1 } \leq |\AA| \pr{\tau_z>\alpha t_*}.
\end{align*}
Since the volume of each annulus is of order $n^{(d-1)\beta+\phi}$ and the total number of annuli in the torus is of order $n^{d-d\beta}$, using Lemma~\ref{lem:hit} we get
\begin{align*}
\|\LL(Q) - \LL(\til{Q})\|_{\rm{TV}} \lesssim n^{d-d\beta}\cdot n^{(d-1)\beta+\phi} \cdot n^{-\alpha d} = n^{d-(d+1)\alpha + \epsilon + \phi} = o(1),
\end{align*}
where in the last step we used the assumption of the Claim. In exactly the same way we get the result for $Z$ and $\til{Z}$.
\end{proof}

\begin{lemma}\label{lem:sandwich}
We have 
\[
\pr{\til{Q}\neq Y} = o(1) \quad \text{as} \quad n \to \infty.
\]
\end{lemma}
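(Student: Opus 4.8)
The plan is to show that for every box $S\in\SS_\beta$ the walk $X$ has visited the same sites of $\ul{S}$ by the deterministic time $\alpha t_*$ as by the stopping time $\tau_S$, with probability $1-o(1)$. Since $\til{Q}\equiv Y\equiv 0$ on $\AA$, it suffices to bound $\pr{\exists\,z\notin\AA:\til{Q}_z\neq Y_z}$, and for $z\notin\AA$ we have $\til{Q}_z=\1(\tau_z>\alpha t_*)$ and $Y_z=\1(\tau_z>\tau_{S_z})$, so $\til{Q}_z\neq Y_z$ exactly when $\tau_z$ lies between $\alpha t_*$ and $\tau_{S_z}$. I would re-read this through excursions: around $z$ put the concentric balls $\B(z,n^{2\phi/\kappa})\subseteq\B(z,n^{\phi})$ appearing in \eqref{eq:deftstar} (after shrinking the outer radius by a constant factor if needed so that $\B(z,n^{\phi})\subseteq\ol{S}_z$), and recall, just as in the proof of Lemma~\ref{lem:hit}, that up to the initial excursion and a partial final one the event that $z$ has been visited by a given time is decided by which excursions of $X$ across $\B(z,n^{\phi})\setminus\B(z,n^{2\phi/\kappa})$ have hit $z$. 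Let $M_z$ be the number of such excursions completed by time $\alpha t_*$, and let $N_z=N_z(n^{2\phi/\kappa},n^{\phi},\alpha t_*)$ be, as in Definition~\ref{def:n3}, the number completed by time $\tau_{S_z}$, i.e.\ during the first $\underline{E}(\alpha t_*,\delta/4)$ excursions of $X$ across $\ol{S}_z\setminus S_z$. Then $\til{Q}_z\neq Y_z$ forces $z$ to be hit for the first time during one of the $|M_z-N_z|+O(1)$ excursions with index between $M_z$ and $N_z$.

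The first step is to trap $M_z$ and $N_z$ in a common narrow window. I would apply Lemma~\ref{lem:n1} (in the form of Remark~\ref{rem:samestatement}) to $M_z$ and Lemma~\ref{lem:n3} to $N_z$, both with $r=n^{2\phi/\kappa}$, $R=n^{\phi}$ and $\delta$ as in \eqref{eq:defdeltafinal}. For this $\delta$ one has $\delta^2 r^{d-2}n^{-\psi}=n^{\psi}$, so the error bounds of both lemmas are $\lesssim n^{\psi}e^{-cn^{\psi}}$, and a union bound over the $\leq n^d$ choices of $z$ gives an event $G$ with $\pr{G^c}\lesssim n^{d}n^{\psi}e^{-cn^{\psi}}=o(1)$ on which, for every $z\notin\AA$,
\[
M_z,\ N_z\ \in\ [\underline{L},\overline{L}],\qquad \underline{L}:=\frac{\alpha t_*}{(1+\delta)\,\trb_{n^{2\phi/\kappa},n^{\phi}}},\qquad \overline{L}:=\frac{\alpha t_*}{(1-\delta)\,\trb_{n^{2\phi/\kappa},n^{\phi}}} .
\]
The decisive point is that the interval trapping $M_z$ from Lemma~\ref{lem:n1} and the interval trapping $N_z$ from Lemma~\ref{lem:n3} have the same two endpoints, so on $G$ the excursion index at which $z$ could first be hit and cause $\til{Q}_z\neq Y_z$ lies in a window of only $\overline{L}-\underline{L}+O(1)\asymp\delta\,n^{2\phi(d-2)/\kappa}\log n$ excursions.

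It then remains to carry out a first-moment estimate on $G$. Conditioning on the $\sigma$-algebra generated by the entrance and exit points of the excursions across $\B(z,n^{\phi})\setminus\B(z,n^{2\phi/\kappa})$ — exactly as in the proof of Lemma~\ref{lem:hit}, which makes the hit events conditionally independent — and using that each such excursion hits $z$ with probability $\crw n^{-2\phi(d-2)/\kappa}(1+O(n^{-\phi(\kappa-2)/\kappa}))$ by Lemma~\ref{lem:hittingprob}, one gets on $G$
\[
\pr{\{\til{Q}_z\neq Y_z\}\cap G}\ \leq\ \pr{z\text{ not hit in the first }\lfloor\underline{L}\rfloor\text{ excursions}}\ -\ \pr{z\text{ not hit in the first }\lceil\overline{L}\rceil\text{ excursions}} .
\]
The computation in the proof of Lemma~\ref{lem:hit} gives that the first probability is $n^{-\alpha d(1+o(1))}$, and since the two excursion counts differ by the factor $\tfrac{1+\delta}{1-\delta}=1+O(\delta)$ with $\delta\log n=o(1)$ (for $\psi$ small enough), the difference is $\lesssim n^{-\alpha d}\,\delta\log n$. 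Summing over the $\asymp n^d$ sites of $\cup_{S\in\SS_\beta}\ul{S}$,
\[
\pr{\til{Q}\neq Y}\ \leq\ \pr{G^c}+\sum_{z\notin\AA}\pr{\{\til{Q}_z\neq Y_z\}\cap G}\ =\ o(1)+O\!\big(n^{\,d}\cdot n^{-\alpha d}\,\delta\log n\big),
\]
and the last term is $n^{\,d(1-\alpha)+\phi(2-d)/\kappa+\psi}\log n=o(1)$ once $\phi$ is taken large enough (and $\psi,\epsilon$ small enough) that $\phi(d-2)/\kappa>d(1-\alpha)$ — which is compatible with the upper bound $\phi<(d+1)\alpha-d-\epsilon$ required by Claim~\ref{cl:annulusx} in the range of $\alpha$ under consideration. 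I expect the main obstacle to be precisely this last tension: the per-site ``gap window'' must be forced narrow enough (i.e.\ $\phi$ bounded below, $\delta$ bounded above) that after multiplying by the per-excursion hit probability $\asymp n^{-2\phi(d-2)/\kappa}$ and summing over all $\asymp n^d$ sites one still gets $o(1)$, while Claim~\ref{cl:annulusx} and the error terms in \eqref{eq:deftstar} push $\phi$ in the opposite direction; it is the reconciliation of these competing requirements that pins down the threshold $\alpha_1(d)$ and forces the careful choice of $\delta$ in \eqref{eq:defdeltafinal}.
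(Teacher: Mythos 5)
Your argument is correct and arrives at exactly the same quantitative bound as the paper, namely $n^{d-\alpha d}\,\delta\log n$ with $\delta$ as in \eqref{eq:defdeltafinal}, but it gets there by a genuinely different route. The paper treats the two inclusions asymmetrically: $\U(\alpha t_*)\subseteq\ul{\U}$ follows in one line from Lemma~\ref{lem:n2} (if every box has completed its $\underline{E}(\alpha t_*,\delta/4)$ excursions by time $\alpha t_*$ then $\tau_S\leq\alpha t_*$), while for $\ul{\U}\subseteq\U(\alpha t_*)$ it shows $\min_S\tau_S\geq\alpha t_*(1-2\delta)$ w.h.p., introduces auxiliary times $\sigma_z$ defined by $\asymp\delta t_*/\trb_{n^\beta,10n^\beta}$ extra excursions across the \emph{macroscopic} annulus $\B(z,10n^\beta)\setminus\B(z,n^\beta)$, and then runs a first-moment bound over the $\lesssim n^{d-\alpha d}$ points of $\ul{\U}$, each hit during those extra excursions with probability $\lesssim\delta\log n$. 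You instead treat both directions symmetrically at the \emph{microscopic} scale $(n^{2\phi/\kappa},n^{\phi})$, trapping the deterministic-time count $M_z$ and the stopping-time count $N_z$ in the common window $[\underline{L},\overline{L}]$ — these are precisely the applications of Lemma~\ref{lem:n1}/Remark~\ref{rem:samestatement} and Lemma~\ref{lem:n3} already made in Lemma~\ref{lem:hit} and in Section~\ref{sec:totalvar}, so no new hypothesis checks arise — and then bound the per-site discrepancy by a difference of survival probabilities; the two computations are the same product factored differently ($n^{d}\cdot n^{-\alpha d}\delta\log n$ versus $n^{d-\alpha d}\cdot\delta\log n$). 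Two points to make explicit in a write-up: first, the inequality $\pr{\{\til{Q}_z\neq Y_z\}\cap G}\leq\pr{\text{not hit in }\lfloor\underline{L}\rfloor\text{ excursions}}-\pr{\text{not hit in }\lceil\overline{L}\rceil\text{ excursions}}$ is best justified through the conditional product over excursions given the entrance/exit $\sigma$-algebra, writing the difference as $\E{\prod_{i\leq\underline{L}}(1-p_i)\bigl(1-\prod_{\underline{L}<i\leq\overline{L}}(1-p_i)\bigr)}$ with each $p_i\asymp n^{-2\phi(d-2)/\kappa}$ by Lemma~\ref{lem:hittingprob}, which yields $n^{-\alpha d}\cdot\delta\log n$ rigorously; second, the $O(1)$ boundary excursions (partial excursions in progress at $\alpha t_*$ or at $\tau_{S_z}$) contribute $n^{d-\alpha d-2\phi(d-2)/\kappa}$ in total, which is dominated by the main term. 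Finally, your closing condition $\phi(d-2)/\kappa>d(1-\alpha)$ is implied by \eqref{eq:condphi} because $\beta<\alpha$ and $\kappa-2\leq d-2$, so your route imposes no constraint on $\alpha$ beyond the one the paper already uses.
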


We prove Lemma~\ref{lem:sandwich} at the end of this section.
We now proceed to the proof of Theorem~\ref{thm:uniform}.

\proof[Proof of Theorem~\ref{thm:uniform} Part I, existence of $\alpha_1(d)$]

Let $\alpha>(1+p_d)/2$.
The statement of the theorem is equivalent to showing
\[
\|\LL(Q) - \LL(Z)\|_{\rm{TV}} = o(1) \ \text{ as } \ n\to \infty.
\]
By the triangle inequality for total variation distance we have
\begin{align*}
\|\LL(Q) - \LL(Z)\|_{\rm{TV}}  &\leq \|\LL(Q) - \LL(\til{Q})\|_{\rm{TV}} +\|\LL(\til{Q}) - \LL(Y)\|_{\rm{TV}}\\
&+ \| \LL(Y) - \LL(\til{Z})   \|_{\rm{TV}}+\|\LL(\til{Z}) - \LL(Z)\|_{\rm{TV}}. 
\end{align*}
By Claim~\ref{cl:annulusx} and Lemma~\ref{lem:sandwich} it is enough to show that 
\begin{align*}
\|\LL(Y) - \LL(\til{Z})\|_{\rm{TV}}  = o(1) \text{ as } n\to \infty.
\end{align*}
Since $Y_z=\til{Z}_z=0$ for $z\in \AA$, in the total variation distance we only consider the distance between the law $\mu$ of $(Y_z)_{z\notin \AA}$ and the law $\nu$ of $(\til{Z}_z)_{z\notin \AA}$. 

For $\gamma =2\alpha-1-2\epsilon$ we define the collection of $n^\gamma$-separated subsets of $\Z_n^d\setminus\AA$ via
\[
\SS = \{ S\subseteq \Z_n^d\setminus \AA: \forall x,y \in S, \ \norm{x-y} \geq n^\gamma\}.
\]
For the total variation distance between $\mu$ and $\nu$ we have
\begin{align}\label{eq:totalvar}
\|\mu-\nu\|_{\rm{TV}} = \sum_{S\in \SS} (\mu(S) - \nu(S))_+ + \sum_{S\notin \SS} (\mu(S)-\nu(S))_+,
\end{align}
where abusing notation we write 
\[
\mu(S) = \pr{Y_z=1, z\in S, Y_u=0, u\notin \AA\cup S}.
\]
Let $Z_\gamma$ be as in Proposition~\ref{pro:separated}.
Since $(a-b)_+ \leq a$ for $a,b>0$, we can bound by Markov's inequality
\begin{align*}
\sum_{S\notin \SS} (\mu(S) - \nu(S))_+ \leq \sum_{S\notin \SS} \mu(S) \leq \E{Z_\gamma} = o(1),
\end{align*}
where the last equality follows from Proposition~\ref{pro:separated}, since $\gamma\in (0,2\alpha-1)$ and $\alpha>(1+p_d)/2$.
Let $M$ satisfy $d-\alpha d-\epsilon dM<0$.
For $B\in\SS_\beta$ we define the collections 
of sets 
\[
\SS_B = \{ S\in \SS: S\subseteq B\} \ \text{ and } \ 
\SS_M = \{ S\in\SS: |S\cap B|\leq M, \ \forall B\in \SS_\beta\}.
\]
Using again  $(a-b)_+\leq a$ for $a,b>0$
we now get
\begin{align*}
\sum_{S\in \SS} (\mu(S) - \nu(S))_+  &= \sum_{S \in \SS \setminus \SS_M} (\mu(S) - \nu(S))_+  + \sum_{S\in \SS_M} (\mu(S) - \nu(S))_+
\\
&\leq \sum_{ S\in \SS \setminus \SS_M} \mu(S) + \sum_{S\in \SS_M} (\mu(S) - \nu(S))_+.
\end{align*}
We now show that $\sum_{S \in \SS \setminus \SS_M} \mu(S) = o(1)$ as $n\to \infty$. Setting $\U= \{ x\notin \AA: \tau_x>\tau_{\SS_x}\}$ we get by the union bound 
\begin{align*}
\sum_{S\in \SS \setminus \SS_M}\mu(S) &= \pr{\U \in \SS\setminus \SS_M} = \pr{\exists B\in \SS_\beta, W\in \SS: |W|=M+1, W\subseteq \U\cap B} \\&\leq 
\sum_{\substack{B\in \SS_\beta, W \in \SS_B\\ |W|=M+1}}\pr{W\subseteq \U} \lesssim n^{d-d\beta} {n^{d\beta} \choose M+1} n^{-\alpha d(M+1)(1+o(1))} \\&\leq  
n^{d-d\beta} \frac{n^{d\beta(M+1)}}{(M+1)!} n^{-\alpha d(M+1)(1+o(1))} = \frac{1}{(M+1)!} n^{d- \epsilon d M -\alpha d+o(1)},
\end{align*}
where in the second inequality we used Lemma~\ref{lem:yhit}.
Since $d-\alpha d - \epsilon d M <0$ we obtain that 
\[
\sum_{S \in \SS \setminus \SS_M} \mu(S) = o(1) \ \text{ as } n \to \infty.
\]
Therefore we only need to show that 
\begin{align}\label{eq:goalx}
\sum_{S\in \SS_M} (\mu(S) - \nu(S))_+ = o(1) \ \text{ as } n\to \infty.
\end{align}
Let $\F$ denote the $\sigma$-algebra generated by 
$X(\tau_i(S))$ and $X(\sigma_i(\ol{S}))$ for all $S\in \SS_\beta$ and $i\geq 0$, where $\tau_i(S)$ and $\sigma_i(\ol{S})$ refer to the stopping times as defined at the beginning of Section~\ref{sec:excursions} with respect to the annulus~$\ol{S}\setminus S$. Then conditioning on $\F$, the collections $(Y_z)_{z\in B}$, for~$B\in \SS_\beta$ become independent. Therefore using the independence and Jensen's inequality, we have
\begin{align*}
&\sum_{S\in \SS_M} (\mu(S)-\nu(S))_+ = \sum_{S\in \SS_M} \left(\E{\prcond{Y_z=1,z\in S, Y_u=0, u\notin \AA\cup S}{\F}{}}  - \nu(S)  \right)_+.\\
&\leq  \sum_{S\in \SS_M} \E{\left(\prod_{B\in \SS_\beta}  \prcond{Y_z=1,z\in S\cap B, Y_u=0, u\in B\setminus(\AA\cup S)}{\F}{} - \prod_{B\in \SS_\beta}\nu(S\cap B) \right)_+  } \\
&\leq \sum_{B\in \SS_\beta} \sum_{\substack{S\in \SS_B\\ |S|\leq M}} \E{\left(\prcond{Y_z=1, z\in S, Y_u=0, u\in B\setminus(\AA\cup S)}{\F}{}  - \nu(S\cap B) \right)_+}.
\end{align*}
Around every $z\in \Z_n^d\setminus \AA$ we place two balls of radii~$r=n^{2\phi/\kappa}$ and~$R=n^\phi$ and we write $N_z$ for the number of excursions across the annulus $\B(z,R)\setminus \B(z,r)$ during the first 
$\underline{E}(\alpha t_*,\delta/4)$ excursions across $S_z\setminus\ol{S}_z$ as in Lemma~\ref{lem:n3}, where we recall~$\delta= n^{\phi(2-d)/\kappa+\psi}$ from~\eqref{eq:defdeltafinal}
and we take~$\psi>0$ very small. 
In some of the calculations below we have substituted the values of~$r$ and~$R$, except in a few places in order to emphasize the cancellation.
We set
\begin{align}\label{eq:elldef}
L=\frac{\alpha t_*}{(1+\delta)\trb_{r,R}} \quad \text{and} \quad L'= \frac{\alpha t_*}{(1-\delta)\trb_{r,R}}
\end{align}
and using Lemma~\ref{lem:n3} we get that there exists $C>0$ such that
$\sum_{S\in \SS_M} (\mu(S)-\nu(S))_+$ is upper bounded by
\begin{align}\label{eq:error}
\begin{split}
\sum_{B\in \SS_\beta} \sum_{\substack{S\in \SS_B\\ |S|\leq M}} \mathbb{E}\big[\big(\prcond{Y_z=1, z\in S, Y_u=0, u\in B\setminus(\AA\cup S), N_w\in (L,L'), w\in B}{\F}{}  \\- \nu(S\cap B) \big)_+ \big]
+n^C e^{-cn^\psi}.
\end{split}
\end{align} 
We now focus on the first term appearing in the expression above. We use the same technique as in the proof of~\cite[Theorem~6]{Prata_thesis}. By the inclusion-exclusion formula it is easy to see that 
\begin{align*}
\prcond{Y_z=1, z\in S, Y_u=0, u\in B\setminus(\AA\cup S), N_w\in (L,L'), w\in B}{\F}{} \\=
\sum_{\ell=0}^{n^{d\beta} -|S\cup (\AA\cap B)|} (-1)^{\ell} \sum_{W\in {B\setminus (S\cup \AA) \choose \ell}} \econd{\prod_{u\in S\cup W}Y_u\1(N_u\in (L,L')) }{\F} 
\end{align*}
and 
\begin{align*}
\nu(S\cap B) = \sum_{\ell=0}^{n^{d\beta} -|S\cup (\AA\cap B)|} (-1)^{\ell} \sum_{W\in {B\setminus (S\cup \AA) \choose \ell}} \left(\frac{1}{n^{\alpha d}} \right)^{|S|+|ell},
\end{align*}

where for a set $P$ and $\ell\in \N$ we write $P\choose \ell$ for the collection of subsets of~$P$ of size~$\ell$.
Let $K=1,\ldots, \frac{n^{d\beta} - |S\cup(\AA\cap B)|}{2}$ to be determined later. Applying the Bonferroni inequalities as in~\cite{ImbuzPrata, Prata_thesis} the sum in~\eqref{eq:error} is upper bounded by
\begin{align}\label{eq:superbig}
\nonumber \E{\sum_{B\in \SS_\beta}\sum_{\substack{S\in\SS_B\\ |S|\leq M}} \left[\sum_{\ell=0}^{2K}(-1)^\ell \sum_{W\in {B\setminus (S\cup \AA) \choose \ell}} \left(\econd{\prod_{v\in S\cup W } Y_v\1(N_v\in(L,L'))}{\F} -\left(\frac{1}{n^{\alpha d}} \right)^{|S|+\ell} \right)    \right]_+} \\
+ \sum_{B\in\SS_\beta} \sum_{\substack{S\in\SS_B\\ |S|\leq M}} \sum_{W\in {B\setminus (S\cup\AA) \choose 2K}} \left(\frac{1}{n^{\alpha d}}\right)^{|S|+2K}.
\end{align}
We start by showing that the second term in~\eqref{eq:superbig} is $o(1)$. Indeed, it can be bounded by 
\begin{align*}
\lesssim n^{d-d\beta} \sum_{s=0}^{M} {n^{d\beta} \choose s} {n^{d\beta} - s \choose 2K} \left(\frac{1}{n^{\alpha d}} \right)^{s+2K} \leq n^{d-d\beta} \sum_{s=0}^{M} \frac{n^{d\beta s}}{s!}\cdot \frac{n^{2d\beta K}}{(2K)!} \cdot \frac{1}{n^{\alpha ds + 2\alpha d K}} \\
= n^{d-d\beta}\sum_{s=0}^{M} \frac{1}{n^{d\epsilon s + 2 d\epsilon K}} \cdot \frac{1}{s! (2K)!}
\asymp \frac{1}{(2K)!} n^{d-\alpha d + d\epsilon - 2dK\epsilon}.
\end{align*}
Choosing $K>0$ such that $d-\alpha d + d\epsilon -2dK\epsilon<0$ gives that the above expression is $o(1)$. This leads us to choose~$K>\frac{1-\alpha +\epsilon}{2\epsilon}$.
Next we turn to bound the first term appearing in~\eqref{eq:superbig}.
To do that we split the sum over all $W\in {B\setminus (S\cup\AA) \choose \ell}$ into the sets $W$ such that $W\cup S \in \SS$ and into those $W$ such that $W \cup S \notin \SS$.  We also bound the positive part by the absolute value, so that we may forget about the term $(-1)^{\ell}$. Hence now we focus on proving that the following is~$o(1)$:
\begin{align}\label{eq:toobig}
\sum_{B\in\SS_\beta} \sum_{\substack{S\in\SS_B\\ |S|\leq M}} \sum_{\ell=0}^{2K}\sum_{\substack{W\in {B\setminus (S\cup \AA) \choose \ell}\\ W\cup S\in \SS}} \E{\left|\econd{\prod_{v\in S\cup W } Y_v\1(N_v\in(L,L'))}{\F} -\left(\frac{1}{n^{\alpha d}} \right)^{|S|+\ell} \right|} \\\label{eq:toobig2}+
\sum_{B\in\SS_\beta} \sum_{\substack{S\in\SS_B\\ |S|\leq M}} \sum_{\ell=1}^{2K}\sum_{\substack{W\in {B\setminus (S\cup \AA) \choose \ell}\\ W\cup S\notin \SS}} \E{\left|\econd{\prod_{v\in S\cup W } Y_v\1(N_v\in(L,L'))}{\F} -\left(\frac{1}{n^{\alpha d}} \right)^{|S|+\ell} \right|}
\end{align}

\begin{claim}\label{cl:one}
There exists $\alpha_1(d)\in(0,1)$ depending only on~$d$ such that for all $\alpha>\alpha_1(d)$ we have that the sum in~\eqref{eq:toobig} is~$o(1)$
as $n\to\infty$.
\end{claim}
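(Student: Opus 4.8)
The plan is to bound the sum in~\eqref{eq:toobig} by showing that each summand $\E{|\econd{\prod_{v\in S\cup W}Y_v\1(N_v\in(L,L'))}{\F} - n^{-\alpha d(|S|+\ell)}|}$ is of the form $n^{-\alpha d(|S|+\ell)}\cdot o(n^{-\eta})$ for a suitable gain $\eta>0$, and then checking that the combinatorial prefactor — roughly $n^{d-d\beta}\cdot n^{d\beta(|S|+\ell)}$ coming from choosing the box $B$, the set $S$, and the set $W$ — is absorbed, provided $\alpha$ is close enough to $1$. Since on the event $W\cup S\in\SS$ the points are $n^\gamma$-separated, the conditional probability factorizes (approximately) over the points: conditional on $\F$ and on each $N_v$ lying in $(L,L')$, the event $\{Y_v=1\}$ for $v\in S\cup W$ is, by the excursion decomposition around the balls $\B(v,n^{2\phi/\kappa})\subseteq\B(v,n^\phi)$ placed around each point, a product over $v$ of independent "not hit in any of the $N_v$ excursions" events. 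Each such factor is, by Lemma~\ref{lem:hittingprob} and Harnack's inequality (Lemma~\ref{lem:harnackineq}), equal to $(1-\crw n^{-2\phi(d-2)/\kappa}(1+O(n^{-\phi(\kappa-2)/\kappa})))^{N_v-1}$, and since $N_v$ is within $(L,L')$ with $L,L'$ both equal to $\alpha n^{2\phi(d-2)/\kappa}\log(n^d)/\crw$ up to a factor $1\pm\delta$, this is $n^{-\alpha d}$ up to a multiplicative error of size $\exp(O(\delta\log n + n^{-\phi(\kappa-2)/\kappa}\log n))$. This is where the definition~\eqref{eq:defdeltafinal} of $\delta=n^{\phi(2-d)/\kappa+\psi}$ enters: it makes $\delta\log n$ the dominant error term, of order $n^{\phi(2-d)/\kappa+\psi}\log n$, which is $n^{-\phi(d-2)/\kappa+o(1)}$.

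**Next I would** assemble the error budget. The per-point multiplicative error is $n^{-\phi(d-2)/\kappa+o(1)}$; with up to $|S|+\ell\le M+2K$ points, the total relative error in the product is still $n^{-\phi(d-2)/\kappa+o(1)}$ (the number of points is bounded). So each summand in~\eqref{eq:toobig} is $\lesssim n^{-\alpha d(|S|+\ell)}\cdot n^{-\phi(d-2)/\kappa+o(1)}$. Summing over $W$ of size $\ell$ inside $B$ gives a factor $\binom{n^{d\beta}}{\ell}\le n^{d\beta\ell}/\ell!$; summing over $S\in\SS_B$ with $|S|\le M$ gives $\lesssim n^{d\beta M}$; summing over $B\in\SS_\beta$ gives $n^{d-d\beta}$. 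Using $\beta=\alpha-\epsilon$, the product of the combinatorial factors with $n^{-\alpha d(|S|+\ell)}$ telescopes (as in the bound on the second term of~\eqref{eq:superbig}) to $n^{d-\alpha d + d\epsilon(\ldots)}$, which is $\lesssim n^{d-\alpha d+o(1)}$ once $\epsilon$ is chosen small relative to the fixed quantities. So the sum in~\eqref{eq:toobig} is $\lesssim n^{d-\alpha d - \phi(d-2)/\kappa + o(1)}$, and I need this exponent to be negative. Since $\phi$ can be taken arbitrarily close to the upper limit allowed by the constraint $d-(d+1)\alpha+\epsilon+\phi<0$ from Claim~\ref{cl:annulusx}, namely $\phi<(d+1)\alpha-d-\epsilon$, substituting this largest admissible $\phi$ and sending $\epsilon,\psi\to0$ gives the requirement $d-\alpha d - ((d+1)\alpha-d)(d-2)/\kappa<0$, i.e. $d(1-\alpha) < ((d+1)\alpha-d)(d-2)/\kappa$. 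Solving this linear inequality in $\alpha$ yields exactly $\alpha>\alpha_1(d) = \frac{(\kappa-2)d+d\kappa}{(\kappa-2)(d+1)+d\kappa}$, which one checks lies in $(0,1)$; this defines $\alpha_1(d)$ and proves the claim. One still needs $\gamma=2\alpha-1-2\epsilon<\phi$ to make the hitting estimates at scale $n^\gamma$ valid inside the balls of radius $n^\phi$ — but this too holds for $\alpha$ near $1$ and $\epsilon$ small, and can be arranged simultaneously.

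**The main obstacle** will be the bookkeeping around the two-sided estimate: Lemma~\ref{lem:hittingprob} only gives the one-point hitting probability, so to handle $\econd{\prod_v Y_v\1(N_v\in(L,L'))}{\F}$ cleanly I need the full conditional independence structure — conditioning not just on $\F$ (the entrance/exit data of the box excursions) but also on the entrance/exit points and counts of the excursions across each annulus $\B(v,n^\phi)\setminus\B(v,n^{2\phi/\kappa})$ — exactly as set up in Definition~\ref{def:n3} and used in Lemma~\ref{lem:yhit}. I would use the tower property to first condition on this finer $\sigma$-algebra, obtaining an exact product $\prod_v\prod_{i=2}^{N_v}\prcond{\tau_v<\tau_{n^\phi}}{\cdots}{\cdots}$, then apply Lemma~\ref{lem:hittingprob} to each factor (the error there is uniform in the entrance/exit points, which is essential for the sum), then substitute the bound $N_v\in(L,L')$ to replace $N_v$ by its deterministic value up to the $\delta$-error, and finally take expectations back up — the $\1(N_v\in(L,L'))$ indicators are what let me do the substitution honestly, and the complementary event $\{N_v\notin(L,L')\}$ was already accounted for by the $n^Ce^{-cn^\psi}$ term in~\eqref{eq:error}. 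Getting the $o(1)$ exponents in "$1+o(1)$" to be genuinely uniform over all $B, S, W$ (rather than depending on the configuration) is the delicate point, but since $|S|+\ell$ is bounded by $M+2K$ and the hitting error in Lemma~\ref{lem:hittingprob} is uniform, this works out.
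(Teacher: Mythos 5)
Your overall strategy is the same as the paper's: condition on the finer excursion data around each point, apply Lemma~\ref{lem:hittingprob} per excursion, use the indicators $\1(N_v\in(L,L'))$ to replace $N_v$ by the deterministic values $L,L'$, and then check that the resulting relative error beats the combinatorial prefactor $n^{d-d\beta}\cdot n^{d\beta(|S|+\ell)}\cdot n^{-\alpha d(|S|+\ell)}\asymp n^{d-\alpha d+o(1)}$. However, there is a concrete error in your error budget. You claim the dominant per-point relative error is $\delta\log n\asymp n^{-\phi(d-2)/\kappa+\psi}\log n$. In fact Lemma~\ref{lem:hittingprob} with $r=n^{2\phi/\kappa}$ and $R=n^\phi$ carries a relative error $O(r/R)=O\bigl(n^{-\phi(\kappa-2)/\kappa}\bigr)$, and since $\kappa=d\wedge 6\leq d$ this term \emph{dominates} $\delta$ whenever $d\geq 7$ (they coincide only for $d\leq 6$). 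Consequently the correct per-summand gain is $n^{-\phi(\kappa-2)/\kappa+o(1)}$, not $n^{-\phi(d-2)/\kappa+o(1)}$, and the inequality you actually derive, $d(1-\alpha)<((d+1)\alpha-d)(d-2)/\kappa$, does \emph{not} solve to the stated threshold $\alpha_1(d)=\frac{(\kappa-2)d+d\kappa}{(\kappa-2)(d+1)+d\kappa}$ except when $d=\kappa$; with the correct gain $(\kappa-2)/\kappa$ in place of $(d-2)/\kappa$ the algebra does produce that formula. Since the claim only asserts existence of some $\alpha_1(d)\in(0,1)$, your argument survives after this correction, but as written the per-summand bound for $d\geq 7$ is not justified by the hitting estimate you invoke.

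A second, smaller slip: you require $\gamma=2\alpha-1-2\epsilon<\phi$ so that ``the hitting estimates at scale $n^\gamma$ [are] valid inside the balls of radius $n^\phi$.'' The inequality must go the other way, $\phi<\gamma$, so that the balls $\B(v,n^\phi)$ around the $n^\gamma$-separated points of $S\cup W$ are disjoint and the hitting events are conditionally independent given the excursion data. Your version $\gamma<\phi$ combined with $\phi<(d+1)\alpha-d-\epsilon$ would force $\alpha>1$ and is therefore unachievable; the correct version $\phi<\gamma$ is compatible with the constraint from Claim~\ref{cl:annulusx} for all $\alpha<1$ once $\epsilon$ is small.
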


\begin{proof}

Let $W\in {B\setminus S \choose \ell}$ such that $W\cup S \in \SS$.  Note that~$|W\cup S|=|S|+\ell$. 
Note that since $\gamma=2\alpha -1 -2\epsilon$, if we take $\phi$ satisfying the assumption of Claim~\ref{cl:annulusx} and $\epsilon>0$ sufficiently small, then $n^\phi<n^\gamma$. Hence we can use Lemma~\ref{lem:hittingprob} to get that almost surely
\begin{align*}
\left( 1 - \frac{\crw}{r^{d-2}}\left( 1 + O\left( \frac{1}{n^{\phi(\kappa-2)/\kappa}}\right)    \right)\right)^{L'(|S|+\ell)}
&\leq \econd{\prod_{v\in S\cup W } Y_v\1(N_v\in(L,L'))}{\F}\\
&\leq \left( 1 - \frac{\crw}{r^{d-2}}\left( 1 + O\left( \frac{1}{n^{\phi(\kappa-2)/\kappa}}\right)   \right)\right)^{L(|S|+\ell)}.
\end{align*}
Substituting the value of~$t_*$ into the expressions for~$L$ and~$L'$ from~\eqref{eq:elldef}, using Lemma~\ref{lem:hittingprob} and the value of $\delta$ (recall equation~\eqref{eq:defdeltafinal}) we get that
\begin{align}\label{eq:valuel}
L&= \frac{\alpha r^{d-2} \log(n^d)}{\crw} \left( 1+O\left( \frac{n^\psi}{n^{\phi(d-2)/\kappa}}\right)  \right) \quad \text{and}\\ 
\label{eq:valuel'}\ L'&= \frac{\alpha r^{d-2} \log(n^d)}{\crw}\left( 1+O\left( \frac{n^\psi}{n^{\phi(d-2)/\kappa}}\right)  \right). 
\end{align}
From~\eqref{eq:valuel} and using that for all $x$ we have $e^{-x} \geq 1 - x$ we get
\begin{align*}
&\left( 1 - \frac{\crw }{r^{d-2}}\left( 1 + O\left( \frac{1}{n^{\phi(\kappa-2)/\kappa}}\right)   \right)\right)^{L(|S|+\ell)} \\
&\leq \exp\left( - L(|S|+\ell)\frac{\crw}{ r^{d-2}}\left( 1+ O\left( \frac{1}{n^{\phi(\kappa-2)/\kappa}}\right) \right)     \right) 
\\&= \exp\left( - \alpha\log(n^d)(|S|+\ell)\left( 1+ O\left( \frac{n^\psi}{n^{\phi(\kappa-2)/\kappa}}\right) \right)  \right) \\
&= n^{-\alpha d(|S|+\ell)} \exp\left( -\alpha \log(n^d) (|S|+\ell) O\left( \frac{n^\psi}{n^{\phi(\kappa-2)/\kappa}}\right) \right) \\
&\leq n^{-\alpha d(|S|+\ell)} \left( 1 - \log(n^d)(|S|+\ell)O\left( \frac{n^\psi}{n^{\phi(\kappa-2)/\kappa}}\right)  \right),
\end{align*}
where in the last inequality we used that for all $x>0$ we have $e^{-x} \leq 1-x+x^2$ and that~$|S|+\ell$ is at most $M+2K$ which is independent of $n$.
Similarly substituting the value of $L'$ and using $1-x\geq e^{-x-2x^2}$ for $x\in(0,1/2)$ we obtain
\begin{align*}
&\left( 1 - \frac{\crw}{r^{d-2}}\left( 1 + O\left( \frac{1}{n^{\phi(\kappa-2)/\kappa}}\right) \right)\right)^{L'(|S|+\ell)}\\
&\geq \exp\left(-L'(|S|+\ell)\frac{\crw}{r^{d-2}} \left( 1 + O\left( \frac{1}{n^{\phi(\kappa-2)/\kappa}}\right) \right)  - L'(|S|+\ell)O\left(\frac{1}{r^{2(d-2)}}\right)\right) \\
&= \exp\left(  - \alpha\log(n^d)(|S|+\ell)\left( 1+ O\left( \frac{n^\psi}{n^{\phi(\kappa-2)/\kappa}}\right) \right) \right)\\
&= n^{-\alpha d(|S|+\ell)}\exp\left( -\log(n^d) (|S|+\ell)O\left( \frac{n^\psi}{n^{\phi(\kappa-2)/\kappa}}\right) \right)\\
&\geq  n^{-\alpha d(|S|+\ell)} \left( 1 -  \log(n^d)(|S|+\ell) O\left( \frac{n^\psi}{n^{\phi(\kappa-2)/\kappa}}\right) \right).
\end{align*}
Putting everything together we deduce
\begin{align*}
\left|\econd{\prod_{v\in S\cup W } Y_v\1(N_v\in(L,L'))}{\F} - \left(\frac{1}{n^{\alpha d}}\right)^{|S|+\ell} \right| \\
\leq n^{-\alpha d(|S|+\ell)} (|S|+\ell)O\left( \frac{n^\psi \log n}{n^{\phi(\kappa-2)/\kappa}}\right).
\end{align*}
Therefore the sum in~\eqref{eq:toobig} is bounded from above by 
\begin{align}\label{eq:aboveeq}
&\sum_{B\in\SS_\beta} \sum_{\substack{S\in\SS_B\\ |S|\leq M}} \sum_{\ell=0}^{2K}\sum_{\substack{W\in {B\setminus (S\cup \AA) \choose \ell}\\ W\cup S\in \SS}} n^{-\alpha d(|S|+\ell)} (|S|+\ell)O\left( \frac{n^\psi \log n}{n^{\phi(\kappa-2)/\kappa}}\right) \\
\nonumber &\leq \sum_{B\in\SS_\beta}\sum_{\substack{S\in\SS_B\\ |S|\leq M}} \sum_{\ell=0}^{2K}\frac{n^{d\beta \ell}}{\ell!} n^{-\alpha d (|S|+\ell)} (|S|+\ell)O\left( \frac{n^\psi \log n}{n^{\phi(\kappa-2)/\kappa}}\right) \\
\nonumber
&\leq n^{d-d\beta} \sum_{s=0}^{M}\frac{n^{d\beta s}}{s!} \sum_{\ell=0}^{2K}(M+2K)n^{-\alpha d s - \epsilon d \ell}O\left( \frac{n^\psi \log n}{n^{\phi(\kappa-2)/\kappa}}\right) \\
\nonumber&=n^{d-d\beta} \sum_{s=0}^{M}\frac{1}{s!} \sum_{\ell=0}^{2K}(M+2K)n^{-\epsilon d s - \epsilon d \ell}O\left( \frac{n^\psi \log n}{n^{\phi(\kappa-2)/\kappa}}\right)\\&\lesssim n^{d-d\beta - \frac{\phi}{\kappa}(\kappa-2)+ \psi}\log n.
\end{align}
Thus if 
\begin{align}\label{eq:condphi}
d-d\beta -\frac{\phi}{\kappa}(\kappa-2) +\psi<0,
\end{align}
then this last quantity is $o(1)$. Recall that $\phi$ was taken to satisfy $\phi<(d+1)\alpha -d - \epsilon$ from Claim~\ref{cl:annulusx}. These two inequalities together give that 
\[
\alpha>\frac{(\kappa-2)d + d\kappa}{(d+1)(\kappa-2)+ d\kappa} + \frac{\epsilon (\kappa-2) + d\epsilon \kappa + \psi \kappa}{(d+1)(\kappa-2)+d\kappa}.
\]
Since we can take~$\psi$ and~$\epsilon$ as small as we like, we deduce that for any
\begin{align}\label{eq:defalphaone}
\alpha>\frac{(\kappa-2)d + d\kappa}{(\kappa-2)(d+1)+ d\kappa} = :\alpha_1(d), 
\end{align}
the sum in~\eqref{eq:aboveeq} is $o(1)$ as $n\to\infty$ and this finishes the proof of the claim.
\end{proof}

\begin{remark}\label{rem:explanation}\rm{
We now explain how we chose the values of $r$, $R$, and $\delta$.  The error terms that come from the hitting estimate Lemma~\ref{lem:hittingprob} are $O(r/R)$ and $O(1/r^2)$ where $r$ and $R$ are the in and out radii, respectively, for the annulus that we put around each point. From the expressions \eqref{eq:valuel} and \eqref{eq:valuel'} for $L$ and $L'$, respectively, we get the additional factor of $1+O(\delta)$ where $\delta$ is as in \eqref{eq:defdeltafinal}. Combining the different estimates yields an error term which is of order $O(r/R)+O(1/r^2) +O(\delta)$.  From the concentration result (Lemma~\ref{lem:n3}) the smallest value of $\delta$ that we can choose is of order $r^{(2-d)/2} n^{\psi}$.  In particular, the value of $r$ essentially determines the value of $\delta$.  The largest value of $R$ that we can take is of order $n^\phi$ because we need the outer boundary of the annulus centred at a point $x \in \ul{S}$ for $S \in \SS_\beta$ to fit inside~$S$.  Given this choice, it is not hard to see that the optimal choice of $r$ is $\asymp n^{2\phi/\kappa}$.
}
\end{remark}

It only remains to show that the sum in~\eqref{eq:toobig2} is~$o(1)$.
This will follow from the following two claims:

\begin{claim}\label{cl:bernoulli}
If $\gamma \in (0,2\beta -1)$, then as $n\to \infty$
\[
\sum_{B\in\SS_\beta} \sum_{\substack{S\in\SS_B\\ |S|\leq M}} \sum_{\ell=1}^{2K}\sum_{\substack{W\in {B\setminus (S\cup \AA) \choose \ell}\\ W\cup S\notin \SS}} \left(\frac{1}{n^{\alpha d}} \right)^{|S|+\ell} = o(1).
\]
\end{claim}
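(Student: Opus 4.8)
The plan is to bound the sum by a negative power of $n$, exploiting the fact that the requirement $W\cup S\notin\SS$ together with $S\in\SS$ forces at least one point of $W$ to lie in a ball of radius $n^\gamma$ centred at another already-chosen point; this point then contributes only a factor $\lesssim n^{d\gamma}$ instead of the $\asymp n^{d\beta}$ positions generically available inside a box of $\SS_\beta$, and since $\beta=\alpha-\epsilon<\alpha$ this loss will be enough.

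First I would fix $B\in\SS_\beta$ and integers $s\le M$ and $\ell$ with $1\le\ell\le 2K$, and count the admissible pairs $(S,W)$ with $|S|=s$ and $|W|=\ell$. Since $|B|\asymp n^{d\beta}$ (recall $\phi<\beta$), there are at most $\binom{|B|}{s}\lesssim n^{d\beta s}$ choices of $S$, with implicit constant depending only on $M$. Given $S$, because $W\cup S\notin\SS$ there are distinct $p,q\in W\cup S$ with $\norm{p-q}<n^\gamma$, and since $S\in\SS$ at least one of them, say $w^{\ast}$, lies in $W$. I would then bound the number of possible $W$ by: choosing the $\ell-1$ points of $W\setminus\{w^{\ast}\}$ arbitrarily in $B$, at cost $|B|^{\ell-1}\lesssim n^{d\beta(\ell-1)}$; and then choosing $w^{\ast}$, which must lie within distance $n^\gamma$ of one of the at most $|S|+\ell-1\le M+2K$ points already chosen, i.e.\ in one of $\lesssim n^{d\gamma}$ positions of $\Z_n^d$. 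This over-counts (it fixes which point of $W$ plays the role of $w^{\ast}$ and drops the constraints that $W$ be disjoint from $S$ and avoid $\AA$, all of which only shrink the count), so the number of admissible pairs with these parameters is $\lesssim n^{d\beta s+d\beta(\ell-1)+d\gamma}$, with constant depending only on $M$, $K$ and $d$.

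Then I would assemble the estimate. Each term of the sum is $n^{-\alpha d(s+\ell)}$, there are $|\SS_\beta|\asymp n^{d-d\beta}$ boxes $B$, and only finitely many pairs $(s,\ell)$ contribute, so using $d\beta-\alpha d=-d\epsilon$ the sum in the claim is
\begin{align*}
\sum_{B\in\SS_\beta}\sum_{s=0}^{M}\sum_{\ell=1}^{2K} n^{d\beta s+d\beta(\ell-1)+d\gamma}\,n^{-\alpha d(s+\ell)}
&\lesssim n^{d-d\beta}\max_{\substack{0\le s\le M\\ 1\le\ell\le 2K}} n^{-d\epsilon(s+\ell)-d\beta+d\gamma}\\
&\lesssim n^{d-2d\beta+d\gamma-d\epsilon},
\end{align*}
the maximum occurring at $s=0,\ \ell=1$. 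Since by hypothesis $\gamma<2\beta-1$ we have $d-2d\beta+d\gamma=d(1-2\beta+\gamma)<0$, so the sum is $\lesssim n^{-d\epsilon}\cdot n^{d(1-2\beta+\gamma)}=o(1)$ as $n\to\infty$.

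The main obstacle, though a mild one, is the combinatorial step that isolates the ``wasted'' point $w^{\ast}$ of $W$ and verifies that it then ranges over only $\lesssim n^{d\gamma}$ positions; everything after that is routine counting. I expect slack in the final exponent: because $\ell\ge1$ always supplies the extra factor $n^{-d\epsilon}$, the bound stays $o(1)$ even at the endpoint $\gamma=2\beta-1$.
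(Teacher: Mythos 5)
Your proof is correct and follows essentially the same route as the paper's: both arguments reduce the claim to counting non-$n^\gamma$-separated configurations, with the key observation that one point of the configuration is confined to $\lesssim n^{d\gamma}$ positions near an already-chosen point, yielding the bound $\lesssim n^{d-2d\beta+d\gamma-d\epsilon}=o(1)$. The only cosmetic difference is that the paper first merges $(S,W)$ into $U=S\cup W$ before counting, while you keep them separate and use $S\in\SS$ to locate the constrained point inside $W$; your closing remark about the extra $n^{-d\epsilon}$ of slack at the endpoint $\gamma=2\beta-1$ is also accurate.
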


\begin{proof}

Clearly we have
\begin{align}
\label{eq:sumsbig}
\nonumber\sum_{B\in\SS_\beta} \sum_{\substack{S\in\SS_B\\ |S|\leq M}} \sum_{\ell=1}^{2K}\sum_{\substack{W\in {B\setminus (S\cup \AA) \choose \ell}\\ W\cup S\notin \SS}} \left(\frac{1}{n^{\alpha d}} \right)^{|S|+\ell} &\lesssim n^{d-d\beta} \sum_{\substack{U\subseteq B:\\ U\notin\SS, |U|\leq M+2K}} \frac{1}{n^{\alpha d |U|}}\\
&= n^{d-d\beta}\sum_{m=2}^{M+2K} \sum_{\substack{U\subseteq B:\\U\notin\SS,|U|=m}} \frac{1}{n^{\alpha d m}}.
\end{align}
We now bound the total number of sets $U\subseteq B$ with $U\notin\SS$ such that $|U|=m$. Since $U\notin\SS$, there exist two points of $U$ that are at distance less than $n^\gamma$ from each other.  The number of ways of choosing these two points is $\lesssim n^{d\beta}\cdot n^{d\gamma}$. Then we have to pick another $m-2$ points. Therefore we get
\begin{align}
\label{eq:cardinalitysep}
|\{U\subseteq B: U\notin\SS, |U|=m\}| \lesssim n^{d\beta} \cdot n^{d\gamma}\cdot {n^{d\beta} \choose m-2} \leq n^{d\beta}\cdot n^{d\gamma} \cdot \frac{n^{d\beta (m-2)}}{(m-2)!}.
\end{align}
Hence~\eqref{eq:sumsbig} is 
\begin{align}
\lesssim n^{d-d\beta} \sum_{m=2}^{M+2K} n^{d\beta}\cdot n^{d\gamma} \cdot \frac{n^{d\beta (m-2)}}{(m-2)!}\cdot \frac{1}{n^{\alpha dm}} \lesssim n^{d-2d\beta + d\gamma - \epsilon d}. \label{eq:cardinalitysep2}
\end{align}
Since~$\gamma =2\alpha-1-2\epsilon$ we get that the expression in \eqref{eq:cardinalitysep2} is~$o(1)$ as $n\to \infty$.
\end{proof}

\begin{claim}\label{cl:two}
For all $\alpha>\alpha_1(d)$ we have as $n\to \infty$ that
\begin{equation}
\sum_{B\in\SS_\beta}\sum_{\substack{S\in \SS_B \\ |S|\leq M}}\sum_{\ell=1}^{2K}\sum_{\substack{W\in {B\setminus (S\cup \AA) \choose \ell} \\ W\cup S \notin \SS_B}} \E{\prod_{u\in S\cup W}Y_u\1(N_u\in (L,L'))} = o(1). \label{eq:cl_two_expression}
\end{equation}
\end{claim}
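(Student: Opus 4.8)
The plan is to drop the indicators, using $\prod_{u\in S\cup W}Y_u\1(N_u\in(L,L'))\le\prod_{u\in S\cup W}Y_u$, and to reduce \eqref{eq:cl_two_expression} to a sum of the quantities $\E{\prod_{u\in U}Y_u}$ over sets $U=S\cup W\subseteq B$, which are controlled by Lemmas~\ref{lem:yhit} and~\ref{lem:2points}. For a fixed box $B\in\SS_\beta$, each such $U$ has at most $2^{M+2K}$ representations as an ordered pair $(S,W)$ with $|S|\le M$ and $1\le|W|\le 2K$, and the condition $W\cup S\notin\SS_B$ (with $W,S\subseteq B$) is equivalent to $U\notin\SS$, which forces $|U|\ge 2$; since $|\SS_\beta|\asymp n^{d-d\beta}$ and $|B|\asymp n^{d\beta}$, the left-hand side of~\eqref{eq:cl_two_expression} is at most a constant times
\[
n^{d-d\beta}\sum_{\substack{U\subseteq B:\ U\notin\SS\\ 2\le|U|\le M+2K}}\E{\prod_{u\in U}Y_u}.
\]
I would then fix an auxiliary parameter $\zeta\in(0,\phi)$, to be chosen small at the very end, and split this sum according to whether $U\notin\SS(\zeta)$ or $U\in\SS(\zeta)\setminus\SS$; this is exhaustive because $\zeta<\phi<\gamma$ (recall $\phi<\gamma$ for $\epsilon$ small) gives $\SS=\SS(\gamma)\subseteq\SS(\zeta)$.

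For $U\in\SS(\zeta)\setminus\SS$ with $|U|=m$, Lemma~\ref{lem:yhit} gives $\E{\prod_{u\in U}Y_u}\lesssim n^{-\alpha dm(1+o(1))}$, and there are $\lesssim n^{d\beta}\cdot n^{d\gamma}\cdot n^{d\beta(m-2)}$ such $U$ inside $B$ (choose the offending pair at distance in $[n^\zeta,n^\gamma)$, then the remaining $m-2$ points). Summing over $m$ exactly as in Claim~\ref{cl:bernoulli}, the dominant ($m=2$) term has exponent $d+d\gamma-2\alpha d+o(1)=-2d\epsilon+o(1)<0$ since $\gamma=2\alpha-1-2\epsilon$, and larger $m$ only decrease it because $\beta<\alpha$; so this part is $o(1)$.

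For $U\notin\SS(\zeta)$ I would group by $|U|=m$ and by the number $f$ of connected components of $U$ in the graph $G_\zeta$ obtained by joining points of $\Z_n^d$ at distance at most $n^\zeta$, noting $1\le f\le m-1$ since $U\notin\SS(\zeta)$. A $G_\zeta$-connected set of $m_i$ points has $\lesssim n^{d\beta}(n^{d\zeta})^{m_i-1}$ realizations inside $B$, so there are $\lesssim n^{d\beta f}(n^{d\zeta})^{m-f}$ sets $U$ of size $m$ with $f$ components, and Lemma~\ref{lem:2points} bounds each such term by $n^{-\alpha df-\alpha d(1-p_d)/(1+p_d)+o(1)}$. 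The resulting exponent,
\[
d-d\beta+d\beta f+d\zeta(m-f)-\alpha df-\alpha d\tfrac{1-p_d}{1+p_d}+o(1),
\]
has $f$-coefficient $d(\beta-\zeta-\alpha)=-d(\epsilon+\zeta)<0$, hence is maximal at $f=1$, where — using $\alpha+\alpha\tfrac{1-p_d}{1+p_d}=\tfrac{2\alpha}{1+p_d}$ — it equals $d+d\zeta(m-1)-\tfrac{2\alpha d}{1+p_d}+o(1)$. Since $\alpha>\alpha_1(d)\ge\alpha_0(d)=(1+p_d)/2$ we have $\tfrac{2\alpha}{1+p_d}>1$, so choosing $\zeta<\phi$ small enough that $1+\zeta(M+2K-1)<\tfrac{2\alpha}{1+p_d}$ makes every such exponent strictly negative; this part is therefore $o(1)$ as well, which completes the proof of the claim.

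The hard part is really the clustered regime $f=1$, i.e.\ $U$ having two points within distance $n^\zeta$: there the crude bound $\E{\prod_{u\in U}Y_u}\lesssim n^{-\alpha d}$ leaves the exponent $d-\alpha d+o(1)$, which is positive when $\alpha<1$. It is precisely the additional decay factor $n^{-\alpha d(1-p_d)/(1+p_d)}$ supplied by Lemma~\ref{lem:2points} — ultimately the two-point estimate \eqref{eq:xyafterR}, expressing that two nearby points are simultaneously unvisited with far smaller probability than the product of the individual probabilities — that drives the exponent below $0$, and it does so exactly in the range $\alpha>(1+p_d)/2=\alpha_0(d)$; the hypothesis $\alpha>\alpha_1(d)$ is more than enough.
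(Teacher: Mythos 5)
Your proposal is correct and follows essentially the same route as the paper's proof: drop the indicators, reduce to bounding $n^{d-d\beta}\sum_{U\subseteq B,\,U\notin\SS,\,|U|\le M+2K}\E{\prod_{u\in U}Y_u}$, split according to whether $U$ is $n^\zeta$-separated, and control the two pieces with Lemma~\ref{lem:yhit} and with the component count plus Lemma~\ref{lem:2points}, using $\alpha>(1+p_d)/2$ in the clustered case. The only differences are cosmetic (you track the multiplicity of representations $U=S\cup W$ and spell out that $f=1$ dominates, both of which the paper leaves implicit).
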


\begin{proof}
Fix $\zeta>0$; we will determine its precise value later.
First we define the collection of the $\zeta$-separated subsets of the box $B$ similar to Section~\ref{sec:separated}: $\SS_B(\zeta) = \{U\subseteq B: |x-y|\geq n^\zeta, \forall x,y \in U\}$. The expression in the left side of \eqref{eq:cl_two_expression} is upper bounded by
\begin{align*}
&\sum_{B\in\SS_\beta}\sum_{\substack{S\in \SS_B \\ |S|\leq M}}\sum_{\ell=1}^{2K}\sum_{\substack{W\in {B\setminus (S\cup \AA) \choose \ell} \\ W\cup S \notin \SS_B}} \E{\prod_{u\in S\cup W}Y_u} \leq \sum_{B\in\SS_\beta}\sum_{\substack{U\subseteq B: U\notin \SS_B,\\  |U|\leq M+2K}}\E{\prod_{u\in U}Y_u} \\
&= n^{d-d\beta}\sum_{\substack{U\subseteq B:U\notin \SS_B, \\ |U|\leq M+2K, U\in \SS_B(\zeta)}} \E{\prod_{u\in U}Y_u}  + n^{d-d\beta} \sum_{\substack{U\subseteq B:U\notin \SS_B, \\ |U|\leq M+2K, U\notin \SS_B(\zeta)}} \E{\prod_{u\in U}Y_u}= I + II.
\end{align*}
For the term $I$, using~\eqref{eq:cardinalitysep} and Lemma~\ref{lem:yhit}, since $U\in \SS_B(\zeta)$, we get
\begin{align*}
I\lesssim n^{d-d\beta}\sum_{m=2}^{M+2K}
n^{d\beta} \cdot n^{d\zeta} \cdot n^{d\beta(m-2)}\cdot \frac{1}{n^{\alpha dm(1+o(1))}} \asymp n^{d+d\zeta - 2d\beta -\epsilon d + o(1)}.
\end{align*}
If~$\zeta\in(0,2\alpha-1-\epsilon)$, this last quantity is $o(1)$. 
It remains to bound $II$. We view~$U\notin\SS_B(\zeta)$ with $U\subseteq B$ as a subset of the graph which arises by adding edges between all of the vertices of $\Z_n^d$ at distance at most $n^\zeta$. 
Writing $\SS(\zeta,f,m)$ for the collection of sets $U\subseteq B$ with $U\notin\SS_B(\zeta)$ and $|U|=m$ that consist of~$f$ components, we have
\[
|\SS(\zeta,f,m)| \lesssim n^{d\beta f} \cdot n^{d\zeta(m-f)}\cdot (m-1)^m,
\]
since first we choose one point for each component among the $n^{d\beta}$ possible points and then we connect the remaining~$m-f$ points to the already existing components. This upper bound and the same explanation appears in~\cite{Prata_thesis}. Using 
also Lemma~\ref{lem:2points} we deduce
\begin{align*}
II \leq n^{d-d\beta}\sum_{\substack{U\subseteq B: U\notin \SS_B,\\ U\notin \SS_B(\zeta),|U|\leq M+2K}} \E{\prod_{u\in U}Y_u} &\lesssim n^{d-d\beta} \sum_{m=2}^{M+2K}\sum_{f=1}^{m-1} \frac{n^{d\beta f} n^{d\zeta(m-f)}}{n^{\alpha df +\alpha d(1-p_d)/(1+p_d)+o(1)}}\\
&\asymp n^{d-2\alpha d/(1+p_d) +d\zeta(M+2K-1) + o(1)}.
\end{align*}
Since~for all~$d$ we have~$\alpha_1(d)>(1+p_d)/2$, by taking~$\zeta$ sufficiently small we see that this last quantity is $o(1)$ and this finishes the proof of the claim and the proof of the theorem.
\end{proof}

\begin{proof}[Proof of Lemma~\ref{lem:sandwich}]

We recall from~\eqref{eq:defdeltafinal} that~$\delta=n^{\phi(2-d)/\kappa+\psi}$ and recall from the setup in Section~\ref{sec:separated} that for~$S\in \SS_\beta$ we write $\tau_{S}$ for the first time that~$X$ has made $ \underline{E}(\alpha t_*, \delta/4)=
\alpha t_*\E{W}/((1+\delta/4)\trs_{n^\beta,10n^\beta})$~
excursions across the annulus $\ol{S}\setminus S$.

We now let
\[ \ul{\U}_{S} = \{z \in S : \tau_z \geq \tau_{S}\} \quad \text{and} \quad \ul{\U} = \bigcup_{S\in \SS_\beta} \ul{\U}_{S}.\]
Note that it suffices to show that $\pr{\U(\alpha t_*) = \ul{\U}}=1-o(1)$. 
If $x_{S}$ is the center of the box~$S\in \SS_\beta$, we write $N_{S}(t)= N_{x_{S}}(n^\beta,n^\beta+n^\phi,t)$.
Since the value of~$\delta$ satisfies the assumptions of Lemma~\ref{lem:n2} we immediately get
\begin{align}\label{eq:subset}
\pr{\U(\alpha t_*) \nsubseteq \ul{\U}} \leq \pr{\exists S\in \SS_\beta: N_{S}(\alpha t_*) <\underline{E}(\alpha t_*,\delta/4)} = o(1) \quad \text{as} \quad n\to \infty.
\end{align}
Therefore, it remains to show that $\pr{\ul{\U}\subseteq \U(\alpha t_*)}=1-o(1)$. 
We first note that 
\begin{align}\label{eq:taub}
\pr{\min_{S \in\SS_\beta} \tau_{S} \geq \alpha t_*(1-2 \delta)}=1-o(1).
\end{align}
Indeed, by Lemma~\ref{lem:n2} we have
\begin{align*}
&\pr{\min_{S \in\SS_\beta} \tau_{S} < \alpha t_*(1-2 \delta) } = \pr{\exists S\in \SS_\beta: \tau_{S}<\alpha t_*(1-2\delta)} \\
&\leq n^{d-d\beta} \pr{N_{S}(\alpha t_*(1-2\delta))>\underline{E}(\alpha t_*,\delta/4)} \lesssim n^{d-d\beta} \exp\left(-cn^{(d-2)(\beta-2\phi/\kappa)+\psi} \right) =o(1),
\end{align*}
since $2\phi/\kappa <\beta$ by Claim~\ref{cl:annulusx} provided that $\epsilon>0$ is sufficiently small.
For each box $S\in\SS_\beta$ and each point $z \in S$, let $\sigma_z$ be the first time that $X|_{[\tau_S,\infty)}$ has made 
\[ \frac{10 \delta t_*}{\trb_{n^\beta,10n^\beta}} \equiv E\]
excursions across the annulus $\B(z,10n^\beta)\setminus \B(z,n^\beta)$.
Then we have
\begin{align*}
\pr{\min_z \sigma_z <\alpha t_*} &= \pr{\min_z \sigma_z <\alpha t_*, \min_{S\in\SS_\beta} \tau_S>\alpha t_*(1-2\delta)} + o(1) \\
&\leq n^d \pr{\nbb_z(n^\beta,10n^\beta,2\alpha t_*\delta)>\frac{10t_*
\delta}{\trb_{n^\beta,10n^\beta}}} + o(1)=o(1),
\end{align*}
where the final assertion follows from Lemma~\ref{lem:n1}. (Lemma~\ref{lem:n1} is stated and proved for~$t\asymp n^d\log n$.  The same result and proof are also applicable for times $t>n^{3/2+\epsilon}$ for any fixed $\epsilon>0$. In this case the exponent in the first error term becomes $t/(\trb_{n^\beta,10n^\beta} n^\psi))$.) Consequently, 
\[
\pr{\min_z \sigma_z \geq \alpha t_*}=1-o(1),
\]
and hence it follows that 
\begin{align}\label{eq:first}
\pr{\{ z : \tau_z \geq \sigma_z\} \subseteq \U(\alpha t_*)}=1-o(1).
\end{align}
In order to show that $\pr{\ul{\U}\subseteq \U (\alpha t_*)}=1-o(1)$, it suffices to show that 
\[
\pr{ \{ z : \tau_z \geq \sigma_z\} = \ul{\U}}=1-o(1).
\]
By~\eqref{eq:subset} and~\eqref{eq:first} we only need to show that 
\begin{align}\label{eq:second}
\pr{\ul{\U}\subseteq \{z:\tau_z\geq \sigma_z\}}=1-o(1).
\end{align}
In order to prove this, we are going to get a bound on the probability that $X$ visits a given point~$z \in \ul{\U} \cap S$ in the time interval $[\tau_S,\sigma_z]$. By Lemma~\ref{lem:hittingprob} we obtain for constants $c_1, c_2, c_3>0$ that
\begin{align*}
\prcond{\tau_z\leq \sigma_z}{z\in \ul{\U}}{} = \prcond{z\text{ is hit in $[\tau_S,\sigma_z]$}}{z\in\ul{\U}}{} = \prcond{z\text{ is hit in $E$ excursions}}{z\in\ul{\U}}{} \\
\leq 1- \left(1-\frac{c_1}{n^{\beta(d-2)}}\right)^{E} \leq 1 - \exp\left(c_2n^{-\frac{\phi}{\kappa}(d-2)+\psi}\log n  \right) \leq  c_3 n^{-\frac{\phi}{\kappa}(d-2)+2\psi}. 
\end{align*}
We now use the above estimate to prove~\eqref{eq:second}. We have
\begin{align}\label{eq:sandwichone}
\begin{split}
\pr{\ul{\U}\nsubseteq \{z:\tau_z\geq \sigma_z\}} \leq \sum_z \pr{\tau_z\leq \sigma_z, z\in \ul{\U}} &=\sum_z \prcond{\tau_z\leq \sigma_z}{z\in \ul{\U}}{} \pr{z\in \ul{\U}} \\
&\lesssim\E{|\ul{\U}|} c_3n^{-\frac{\phi}{\kappa}(d-2)+2\psi}.
\end{split}
\end{align}
From Lemma~\ref{lem:yhit} we immediately get
\begin{align}\label{eq:sand2}
\E{|\ul{\U}|} = \sum_{z}\pr{Y_z = 1}  \lesssim n^{d-\alpha d}.
\end{align}
Therefore combining~\eqref{eq:sandwichone} and~\eqref{eq:sand2} we deduce
\begin{align*}
\pr{\ul{\U}\nsubseteq \{z:\tau_z\geq \sigma_z\}} \lesssim n^{d-\alpha d -\frac{\phi}{\kappa}(d-2) +2\psi},
\end{align*}
and using~\eqref{eq:condphi} it follows that for $\psi$ sufficiently small this last quantity is $o(1)$ as $n\to \infty$ and this concludes the proof. 
\end{proof}

\begin{proof}[Proof of Theorem~\ref{thm:uniform} Part II, existence of $\alpha_0(d)$]

We define
\begin{align}\label{eq:defwpartii}
W = \sum_{x,y: \|x-y\| =1} \1(Q_x =Q_y=1) \ \ \text{ and } \ \ U = \sum_{x,y:\|x-y\|=1}\1(Z_x=Z_y=1).
\end{align}
Since $\pr{Z_x = Z_y=1} = n^{-2\alpha d}$, we get that 
$\E{U} \asymp n^{d-2\alpha d}$.
Let $\epsilon\in(0,2\alpha p_d/(1+p_d))$. Then we have
\begin{align*}
\|\LL(Q) - \LL(Z)\|_{\rm{TV}} \geq \pr{W\geq n^{d-2\alpha d/(1+p_d) - \epsilon d}} -  \pr{U\geq n^{d-2\alpha d/(1+p_d) - \epsilon d}}.
\end{align*}
By Markov's inequality we immediately get
\[
\pr{U\geq n^{d-2\alpha d/(1+p_d) - \epsilon d}} = o(1) \ \text{ as } \ n\to \infty
\]
since $\epsilon<2\alpha p_d/(1+p_d)$. It thus remains to show that 
\begin{align}\label{eq:wgeq}
\pr{W\geq n^{d-2\alpha d/(1+p_d) - \epsilon d}} = 1-o(1) \ \text{ as } \ n\to \infty.
\end{align}
Let $L = \{(x_i,y_i)\}_{i=1}^{n^{d-\epsilon}}$ be a grid of points such that $\|x_i-y_i\|=1$ for all $i$ and $\|x_i-y_j\| \geq n^{\epsilon/d}$ for all~$i\neq j$.
We now place two balls around each pair of points~$x_i,y_i$ of radii $R=n^{\epsilon/d}/2$ and $r=n^{\epsilon/d^2}$. Let $N_i$ be the number of excursions in the annulus around the point~$x_i$ up to time~$\alpha t_*$. Let $E_i$ be the event that neither~$x_i$ nor~$y_i$ is covered during the $A'=\alpha t_*/((1-\delta)\trb_{r,R})$ excursions of the annulus around them, where $\delta=r^{(2-d)/2}n^{\psi}$ for some $\psi>0$ sufficiently small.  We now define
\[
\til{W} = \sum_{i=1}^{n^{d-\epsilon}} \1(E_i).
\]
Then by the union bound and Lemma~\ref{lem:n1} we have that
\[
\pr{\exists i: N_i>A'} = o(1) \ \text{ as } \ n \to \infty.
\] 
Therefore we get as $n\to \infty$ that
\[
\pr{W\leq \til{W}} \leq  \pr{\exists i:N_i>A'} = o(1).
\]
So we can now bound
\begin{align*}
\pr{W\geq n^{d-2\alpha d/(1+p_d) - \epsilon d}}
&\geq \pr{\til{W}\geq n^{d-2\alpha d/(1+p_d) - \epsilon d}, W\geq \til{W}}\\
&\geq \pr{\til{W}\geq n^{d-2\alpha d/(1+p_d) - \epsilon d}} + o(1).
\end{align*}
It thus suffices to show that 
\begin{align*}
\pr{\til{W}\geq n^{d-2\alpha d/(1+p_d) - \epsilon d}} = 1 - o(1) \ \text{ as } n\to \infty.
\end{align*}
Let $\F$ be the $\sigma$-algebra generated by $X(\tau_j(x_i))$ and $X(\sigma_j(x_i))$ for all $i$ and $j$, where $\tau_j(x_i)$ and $\sigma_j(x_i)$ are as defined at the beginning of Section~\ref{sec:excursions}. Then given $\F$ the events $E_i$ become independent. From~\eqref{eq:xyafterRneighbors} of Lemma~\ref{lem:2pointestimate} and using $1-x\geq e^{-x-2x^2}$ for $x\in(0,1/2)$ we get that for all~$i$ 
and all~$n$ sufficiently large
\begin{align*}
\prcond{E_i}{\F}{} \geq \left(1 - \frac{2\crw}{(1+p_d) r^{d-2}} + O\left(\frac{1}{r^d}\right) \right)^{A'} 
&\geq  \exp\left(-A'\frac{2\crw}{(1+p_d)r^{d-2}} +O\left(\frac{1}{r^{d}} \right)A' \right) \\ 
&\gtrsim n^{-2\alpha d/(1+p_d)+o(1)}.
\end{align*}

From the above it follows that for all~$n$ sufficiently large 
\begin{align*}
\econd{\til{W}}{\F} - n^{d-2\alpha d/(1+p_d) - \epsilon d} \geq \frac{\econd{\til{W}}{\F}}{2},
\end{align*}
and hence by Chebyshev's inequality we get
\begin{align*}
\pr{\til{W}\leq n^{d-2\alpha d/(1+p_d) - \epsilon d}} =\E{\prcond{\til{W}\leq n^{d-2\alpha d/(1+p_d) - \epsilon d}}{\F}{}} \\
\leq  \E{\prcond{\left|\til{W} - \econd{\til{W}}{\F}\right| \geq \frac{\econd{\til{W}}{\F}}{2}}{\F}{}} 
\leq 4\E{\frac{\var(\til{W}|\F)}{\econd{\til{W}}{\F}^2}}.
\end{align*}
Since conditional on $\F$ the events $E_i$ are independent, we get
\begin{align*}
\var(\til{W}|\F)= \sum_{i} \var(\1(E_i)|\F)
 = \sum_i\left(\prcond{E_i}{\F}{}  - \prcond{E_i}{\F}{}^2\right) \leq \econd{\til{W}}{\F}.
\end{align*}
Therefore, we deduce
\begin{align*}
\pr{\til{W}\leq n^{d-2\alpha d/(1+p_d) - \epsilon d}} \leq 4\E{\frac{1}{\econd{\til{W}}{\F}}} \lesssim \frac{1}{n^{d-2\alpha d/(1+p_d) -\epsilon+o(1)}}.
\end{align*}
Setting~$\alpha_0(d)=(1+p_d)/2$ gives that for all $\alpha\in(0,\alpha_0(d))$ if we take $\epsilon$ sufficiently small the quantity above is~$o(1)$ and this concludes the proof of the theorem.
\end{proof}

\section{Exact uniformity}
\label{sec:exact} In this section we prove Theorem~\ref{thm:exact}. We start with a preliminary lemma.

\begin{lemma}\label{lem:oneone}
Fix $\gamma>0$ and $\eta\in(0,1)$.
Let $A\subseteq \Z_n^d$ satisfy $A\in \SS(\gamma)$ {\rm{(}}recall~\eqref{eq:separdef}{\rm{)}}. Then for all~$x$ such that ${\rm{dist}}(x,A)\geq n^\gamma$ and all~$z\in A$ we have
\[
\prstart{X(\tau_{A}) =z}{x} = \frac{1}{|A|} + O_\eta(|A|n^{-\gamma(d-2)(1-\eta)}) \quad \text{as} \quad n\to \infty,
\]
where $\tau_{A}$ is the first hitting time of $A$ and $O_\eta$ means that the constants depend on~$\eta$. 
\end{lemma}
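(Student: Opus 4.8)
The plan is to combine reversibility of simple random walk on $\Z_n^d$ with the structural fact that, since $A\in\SS(\gamma)$, the balls $\B(a,n^\gamma/4)$, $a\in A$, are pairwise disjoint and each meets $A$ only at its centre, so by translation-invariance of $\Z_n^d$ the walk behaves \emph{identically} near every point of $A$. The first ingredient is a reversal identity: for $x\notin A$ and $z\in A$, reading backwards any trajectory that starts at $x$ and first hits $A$ at $z$ produces a trajectory of equal weight that starts at $z$, avoids $A$ at all positive times, and ends at $x$; hence
\[
\prstart{X(\tau_A)=z}{x}=\estart{\#\{0\le k<\tau_A^{+}:X(k)=x\}}{z},\qquad \tau_A^{+}:=\min\{k\ge 1:X(k)\in A\}.
\]
By the strong Markov property at $\tau_x$ the right-hand side equals $\prstart{\tau_x<\tau_A^{+}}{z}\cdot G_A(x,x)$, where $G_A(x,x):=\estart{\#\{0\le k<\tau_A:X(k)=x\}}{x}\in(0,\infty)$ does not depend on $z$.

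Next I would peel off the first excursion of the walk from $z$ out to $\partial\B(z,R)$, where $R:=n^\gamma/4$. Since $\B(z,R)\cap A=\{z\}$ and $\mathrm{dist}(x,z)\ge n^\gamma>R$, on the event that the walk returns to $z$ before reaching $\partial\B(z,R)$ it never reaches $x$, whereas on the complementary event $\{\sigma<\tau_z^{+}\}$ (with $\sigma$ the hitting time of $\partial\B(z,R)$ and $\tau_z^{+}$ the first return to $z$) the walk has visited neither $A$ (after time $0$) nor $x$ by time $\sigma$; therefore
\[
\prstart{\tau_x<\tau_A^{+}}{z}=q\cdot\E{\prstart{\tau_x<\tau_A}{Y_z}},\qquad q:=\prstart{\sigma<\tau_z^{+}}{z},
\]
where $Y_z:=X(\sigma)$ conditioned on $\{\sigma<\tau_z^{+}\}$ has a law equal to the $z$-translate of a fixed law on $\partial\B(0,R)$, and $q$ is independent of $z$ --- both by translation-invariance. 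Now $Y_z$ lies at distance $\asymp n^\gamma$ from $x$ and from $A$, so the walk from $Y_z$ equilibrates before it can hit $x$ or $A$: coupling it with a walk from $\pi$ over $\asymp n^2\log n$ steps (using $\tmix\asymp n^2$) and bounding the probability that either walk reaches $x$ or $A$ before the coupling closes --- which is $\lesssim|A|\,n^{-\gamma(d-2)}$ by a union bound over $A$ together with the estimate $\prstart{\tau_a\le n^2\log n}{y}\lesssim n^{-\gamma(d-2)}$ for $\mathrm{dist}(y,a)\gtrsim n^\gamma$ (obtained from Lemma~\ref{lem:hittingprob} by a scale-doubling argument) --- yields $\E{\prstart{\tau_x<\tau_A}{Y_z}}=\prstart{\tau_x<\tau_A}{\pi}+O(|A|\,n^{-\gamma(d-2)})$.

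Assembling the pieces, $\prstart{X(\tau_A)=z}{x}=G_A(x,x)\,q\,(p_\ast+\varepsilon_z)$, where $p_\ast:=\prstart{\tau_x<\tau_A}{\pi}$ and $G_A(x,x),q,p_\ast$ are the same for all $z\in A$ while $|\varepsilon_z|\lesssim|A|\,n^{-\gamma(d-2)}$. Summing over $z\in A$ and using $\sum_{z\in A}\prstart{X(\tau_A)=z}{x}=1$ (recurrence on the torus) pins down the constant $G_A(x,x)\,q$ and gives
\[
\prstart{X(\tau_A)=z}{x}=\frac{1}{|A|}+O\!\Big(\frac{|A|\,n^{-\gamma(d-2)}}{p_\ast}\Big).
\]
Since $x$ is a single point far from $A$, a capacity comparison gives $p_\ast\asymp|A|^{-1}$, so the error is $O(|A|\,n^{-\gamma(d-2)})$, which in the parameter range where the lemma is applied lies comfortably inside $O_\eta(|A|\,n^{-\gamma(d-2)(1-\eta)})$; the weaker exponent and the $O_\eta$ are present only to absorb the polynomial slack inherent in Lemma~\ref{lem:hittingprob} (whose radii must tend to infinity) and in the lower bound on $p_\ast$.

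The main obstacle is the middle step: showing that the ``global'' factor $\E{\prstart{\tau_x<\tau_A}{Y_z}}$ is insensitive to which point of $A$ the walk started from. The separation hypothesis $A\in\SS(\gamma)$ enters exactly here --- it makes the ``local'' escape data (the escape probability $q$ and the law of $Y_z$, up to translation) literally $z$-independent by translation-invariance, and it places $Y_z$ at distance $\asymp n^\gamma$ from $A$, so that the mixing-versus-hitting comparison succeeds with error controlled by $\prstart{\tau_A\le n^2\log n}{y}\lesssim|A|\,n^{-\gamma(d-2)}$ for $y$ at distance $\ge n^\gamma$ from $A$.
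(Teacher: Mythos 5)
Your argument is correct in outline, but it takes a genuinely different route from the paper's. Both proofs rest on reversibility plus the observation that separation makes the local behaviour near each $z\in A$ identical by translation invariance, and both close by summing over $z\in A$ to normalize; the difference is where the reversal is applied. The paper first replaces the starting point $x$ by $\pi$ (via the uniform mixing time), then uses the cycle identity $\prstart{X(\tau_A)=z}{\pi}=\pi(z)\estart{\tau_A^+}{z}$ and shows the expected return time $\estart{\tau_A^+}{z}$ is $z$-independent up to errors; since this forces it to control an \emph{expectation of an unbounded variable on a rare event} (namely $\estart{\tau_A'\1(\tau_A<n^\epsilon\tunif)}{\partial\B}$), it resorts to H\"older's inequality and Kac's moment formula, which is precisely where the $\eta$-loss in the exponent comes from. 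You instead keep $x$ fixed, use the path-reversal/first-entrance identity to write $\prstart{X(\tau_A)=z}{x}=G_A(x,x)\,q\,\E{\prstart{\tau_x<\tau_A}{Y_z}}$, and only ever compare \emph{probabilities}, so the mixing-versus-hitting error is directly $O(|A|n^{-\gamma(d-2)})$ up to logarithms --- in fact a slightly stronger bound than the lemma asserts, with no need for the H\"older step. This is a clean simplification. Three small points you should nail down: (i) a coupling window of $n^2\log n$ steps only gives total variation error $n^{-c}$, so either take the constant in front large enough that $n^{-c}\leq n^{-d}$, or use $n^{2+\epsilon}\asymp n^{\epsilon}\tunif$ steps as the paper does (the extra hitting cost over the longer window is only $O(|A|n^{2+\epsilon-d})$, which is dominated by $|A|n^{-\gamma(d-2)}$ for $\gamma<1$); (ii) the prefactor $G_A(x,x)\,q$ multiplying the fluctuation $\varepsilon_z-\overline{\varepsilon}$ must be shown to be $O(1)$, equivalently $\prstart{\tau_A<\tau_x^+}{x}\gtrsim 1$ --- this follows since the walk escapes $\B(x,n^\gamma/2)$ without returning to $x$ with probability $1-p_d-o(1)$ and then hits $A$ before $x$ with probability bounded below (so your detour through $p_*\asymp|A|^{-1}$, whose displayed error bookkeeping drops a factor of $|A|^{-1}$ anyway, can be avoided entirely); and (iii) the implicit assumption $\overline{\varepsilon}=o(1/|A|)$ when you invert the normalization is exactly the regime in which the lemma is non-vacuous, so it costs nothing, but it is worth saying.
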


\begin{proof}
We let 
\[
\tunif = \min\left\{ t\geq 0: \max_{x,y}\left| 1- \frac{P^t(x,y)}{\pi(y)} \right| \leq \frac{1}{4} \right\}.
\]
Then it is standard that $\tunif \asymp c(d)n^2$ with $c(d)$ only depending on dimension. 
Let $\epsilon>0$ be sufficiently small.  We define
\[
\tau_A'=\inf\left\{t\geq n^\epsilon\tunif: X(t) \in A \right\}.
\]
Then we have
\begin{align}\label{eq:decprstart}
\begin{split}
&\prstart{X(\tau_A) =z}{x} =  \prstart{X(\tau_A) =z,\tau_A \geq n^\epsilon\tunif}{x}  + \prstart{X(\tau_A)  =z,\tau_A <n^\epsilon\tunif}{x} \\
&= \prstart{X(\tau_A')=z}{x} - \prstart{X(\tau_A')=z,\tau_A<n^\epsilon \tunif}{x} + \prstart{X(\tau_A) =z,\tau_A <n^\epsilon\tunif}{x}.
\end{split}
\end{align}
By the Markov property we have
\begin{align}\label{eq:pizrelate}
\begin{split}
\prstart{X(\tau_A') =z}{x} &= \sum_{y}\prstart{X(\tau_A')=z, X(n^\epsilon \tunif) =y}{x} \\
&= \sum_y \prstart{X(\tau_A)=z}{y} \prstart{X(n^\epsilon \tunif) = y}{x} \\
&=\left( 1 + O(e^{-cn^\epsilon})  \right) \prstart{X(\tau_A) =z}{\pi},
\end{split}
\end{align}
where the last equality follows from Proposition~\ref{prop:mixing_decay}.
Let $\tau_A^+$ be the first return time to $A$.
By reversibility we have for all $z\in A$
\begin{align}\label{eq:piexp}
\begin{split}
\prstart{X(\tau_A) =z}{\pi} &= \sum_{t\geq 0} \prstart{X(0) \notin A, X(1) \notin A,\ldots, X(t-1) \notin A, X(t) = z}{\pi} \\
&= \sum_{t\geq 0}  \prstart{X(0)=z, X(1)\notin A,\ldots, X(t-1) \notin A, X(t) \notin A}{\pi}\\
&= \sum_{t\geq 0} \pi(z) \prstart{X(1)\notin A,\ldots,X(t) \notin A}{z} \\&
= \sum_{t\geq 0} \pi(z) \prstart{\tau_A^+ >t}{z} = \pi(z) \estart{\tau_A^+}{z}.
\end{split}
\end{align}
Since $A \in \SS(\gamma)$, it follows that for all $w\in A$ we have~$A\cap \B(w) = \{w\}$, where $\B(w) = \B(w,n^\gamma/2)$.  
This now gives that for all~$w \in A$ 
\begin{align}\label{eq:same}
\estart{\tau_A^+\1(\tau_A^+<\tau_{\partial \B(w)})}{w} = K \quad \text{and} \quad \prstart{\tau_A^+>\tau_{\partial \B(w)}}{w} = s
\end{align}
where $K$ and $s$ are independent of $w$ and $\tau_{\partial\B(w)}$ is the first hitting time of $\partial\B(w)$. Therefore we get
\begin{align}\label{eq:plustaua}
\begin{split}
\estart{\tau_A^+}{z} &= \estart{\tau_A^+ \1(\tau_A^+ <\tau_{\partial \B(z)})}{z} + \estart{\tau_A^+\1(\tau_A^+ >\tau_{\partial \B(z)})}{z} \\
&= K + \estart{\tau_A^+\1(\tau_A^+ >\tau_{\partial \B(z)})}{z}. 
\end{split}
\end{align}
Using~\eqref{eq:same} we obtain for all $z\in A$
\begin{align}\label{eq:plusind}
\begin{split}
\estart{\tau_A^+\1(\tau_A^+ >\tau_{\partial \B(z)})}{z} &=\estart{ \estart{\tau_A}{X(\tau_{\partial \B(z)})}}{z} \prstart{\tau_A^+>\tau_{\partial \B(z)}}{z} \\
&= s\estart{ \estart{\tau_A}{X(\tau_{\partial \B(z)})}}{z}.
\end{split}
\end{align}
Writing for shorthand $\estart{F}{\partial\B} = \estart{ \estart{F}{X(\tau_{\partial \B(z)})}}{z} $
we deduce
\begin{align}\label{eq:partialbtaua}
\begin{split}
\estart{\tau_A}{\partial \B} & = \estart{\tau_A'\1(\tau_A\geq n^\epsilon \tunif)}{\partial \B} + O(n^{2+\epsilon})\\
&= \estart{\tau_A'}{\partial \B} - \estart{\tau_A'\1(\tau_A<n^\epsilon \tunif)}{\partial \B} + O(n^{2+\epsilon}).
\end{split}
\end{align}
Using again Proposition~\ref{prop:mixing_decay} as in the last step of \eqref{eq:pizrelate} we have
\begin{align}\label{eq:taua'pi}
\estart{\tau_A'}{\partial \B} = \left(1+O(e^{-cn^\epsilon})\right) \estart{\tau_A}{\pi}.
\end{align}
By H\"older's inequality for $p, q>1$ satisfying $1/p+1/q=1$ we get
\begin{align}\label{eq:expprob}
\estart{\tau_A'\1(\tau_A<n^\epsilon \tunif)}{\partial \B} \leq \estart{(\tau_A')^p}{\partial \B}^{1/p} \prstart{\tau_A<n^
\epsilon \tunif}{\partial \B}^{1/q}.
\end{align}
By the strong Markov property and Kac's moment formula \cite{FP_kac} we obtain
\begin{align*}
 \estart{(\tau_A')^p}{\partial \B}^{1/p} \lesssim \max_y \estart{n^{(2+\epsilon)p} + (\tau_A)^p}{y}^{1/p} \lesssim \left(n^{(2+\epsilon)p} + \max_{x,y}\estart{\tau_y}{x}^p\right)^{1/p} \lesssim n^d,
\end{align*}
since~$\max_{x,y}\estart{\tau_y}{x}\asymp n^d$ (this follows from instance from Lemma~\ref{lem:trRlem} for $r=1$). 
Writing $G(x,y) = \estart{\sum_{t=0}^{\tunif}\1(X(t)=y)}{x}$ for the Green kernel we have by Lemma~\ref{lem:green_kernel} that
\begin{align}\label{eq:seta}
 \prstart{\tau_A\leq \tunif}{\partial \B} \leq \sum_{w\in A} G(\partial \B(z),w) = O\left(|A|n^{-\gamma (d-2)}\right),
\end{align}
since ${\rm{dist}}(w,\partial \B(z)) \geq n^{\gamma}/2$ for all $w\in A$. By the union bound we get
\begin{align}\label{eq:aset}
\prstart{\tunif<\tau_A<n^
\epsilon \tunif}{\partial \B} \lesssim n^{2+\epsilon}\frac{|A|}{n^d} = O(|A|n^{2-d+\epsilon}).
\end{align}
Therefore, from~\eqref{eq:seta} and~\eqref{eq:aset} we deduce
\begin{align}\label{eq:abadd}
\begin{split}
\prstart{\tau_A<n^
\epsilon \tunif}{\partial \B} &= \prstart{\tau_A\leq \tunif }{\partial \B} + \prstart{\tunif<\tau_A<n^
\epsilon \tunif}{\partial \B} \\
&= O(|A| n^{-\gamma (d-2)}),
\end{split}
\end{align}
since~$\gamma\in(0,1)$ and $\epsilon>0$ is sufficiently small. Similarly we have
\begin{align}\label{eq:sameforx}
\prstart{\tau_A<n^\epsilon \tunif}{x} = O(|A| n^{-\gamma (d-2)}).
\end{align}
Substituting~\eqref{eq:abadd} into~\eqref{eq:expprob} we get
\begin{align*}
\estart{\tau_A'\1(\tau_A<n^\epsilon \tunif)}{\partial \B} \lesssim n^d\left(|A|n^{-\gamma(d-2)}\right)^{1/q}.
\end{align*}
Taking $1/q=1-\eta$ gives
\begin{align}\label{eq:tauataua'}
\estart{\tau_A'\1(\tau_A<n^\epsilon \tunif)}{\partial \B} \lesssim n^d\left(|A|n^{-\gamma(d-2)}\right)^{1-\eta}.
\end{align}
Plugging~\eqref{eq:taua'pi} and~\eqref{eq:tauataua'} into~\eqref{eq:partialbtaua} gives
\begin{align}\label{eq:exptaubpartial}
\begin{split}
\estart{\tau_A}{\partial B} = \left( 1 + O(e^{-cn^\epsilon})\right) \estart{\tau_A}{\pi} + O(n^d |A|^{1-\eta} n^{-\gamma(d-2)(1-\eta)}) +O(n^{2+\epsilon})\\
= \estart{\tau_A}{\pi} +  O(n^d |A|^{1-\eta} n^{-\gamma(d-2)(1-\eta)}) +O(n^{2+\epsilon}).
\end{split}
\end{align}
Combining~\eqref{eq:exptaubpartial} with~\eqref{eq:decprstart}, \eqref{eq:pizrelate}, 
\eqref{eq:piexp}, \eqref{eq:plustaua}, \eqref{eq:plusind} and~\eqref{eq:sameforx} results in
\begin{align*}
\prstart{X(\tau_A)=z}{x} = \frac{K+s\estart{\tau_A}{\pi}}{n^d}   +  O(|A| n^{-\gamma(d-2)(1-\eta)}).
\end{align*}
Since the first term appearing in the sum above is independent of $z$ by summing the above equality over all $z\in A$ we get
\begin{align*}
1 = |A| \left(\frac{K+s\estart{\tau_A}{\pi}}{n^d} \right) +O(|A|^{2} n^{-\gamma(d-2)(1-\eta)}).
\end{align*}
This implies that 
\begin{align*}
\frac{K+s\estart{\tau_A}{\pi}}{n^d} = \frac{1}{|A|} + O(|A|n^{-\gamma(d-2)(1-\eta)}).
\end{align*}
Finally we get
\begin{align*}
\prstart{X(\tau_A)=z}{x} = \frac{1}{|A|} + O(|A|n^{-\gamma(d-2)(1-\eta)})
\end{align*}
and this finishes the proof.
\end{proof}

\begin{proof}[Proof of Theorem~\ref{thm:exact} Part I, existence of $\alpha_1(d)$]

Let $t_1=(\alpha - \epsilon)t_*$, where $\alpha-\epsilon>\alpha_1(d)$ and $\alpha_1(d)$ is as in Theorem~\ref{thm:uniform}. For each $x\in \Z_n^d$ we let $Z_x=1$ with probability $n^{-d(\alpha - \epsilon)}$ and $0$ otherwise, independently over different $x\in \Z_n^d$. We set $V=\{x\in \Z_n^d: Z_x=1\}$. Then by Theorem~\ref{thm:uniform} we have that 
\begin{align*}
\|\LL(\U(t_1)) - \LL(V)\|_{\rm{TV}} = o(1) \quad \text{as} \quad n\to \infty,
\end{align*}
where we recall that $\U(t)$ is the uncovered set at time~$t$.
Therefore there exists a coupling of $V$ and $\U(t_1)$ such that 
\begin{align}\label{eq:success}
\pr{\U(t_1) \neq V} =o(1) \quad \text{as} \quad n\to \infty.
\end{align}
We now describe a coupling of the laws of~$\U(\tau_\alpha)$ and~$\W_\alpha$: First we fix $\gamma\in(0,2(\alpha -\epsilon) -1)$. We couple $\U(t_1)$ and $V$ using the optimal coupling. If $|V|<n^{d-\alpha d}$ or $V\notin \SS(\gamma)$, then we generate $\U(\tau_\alpha)$ and $\W_\alpha$ independently. If $|V| \geq n^{d-\alpha d}$ and $V\in \SS(\gamma)$, then we keep running the random walk until it has visited $n^d-n^{d-\alpha d}$ points. We also remove points from $V$ independently at random until we are left with a set on~$n^{d-\alpha d}$ points. Note that the resulting set is equal in distribution to $\W_\alpha$.

Let $\xi_1,\ldots,\xi_{|V|-n^{d-\alpha d}} \in \U(t_1)$ be the first $|V|-n^{d-\alpha d}$ points in $V$ visited by the random walk after time~$t_1$. Let $\zeta_1$ be uniform in~$V$. For each $2\leq j\leq |V|-n^{d-\alpha d}$ we inductively let~$\zeta_j$ be uniform in $V\setminus \{\zeta_1,\ldots, \zeta_{j-1}\}$. Then by Lemma~\ref{lem:oneone} there exists a coupling of $(\xi_i)$ and $(\zeta_i)$ such that
\begin{align}\label{eq:beforeunion}
\nonumber\prcond{\xi_i\neq \zeta_i}{V=\U(t_1)\in \SS(\gamma), \xi_j=\zeta_j, \forall j<i, |V|\leq n^{d-\alpha d+\epsilon}}{}
\\
\lesssim n^{-\gamma(d-2)(1-\epsilon)}\cdot n^{2(d-\alpha d+\epsilon)}.
\end{align}
We first couple $\xi_1$ and $\zeta_1$ using the above coupling. If this succeeds, then we couple $\xi_2$ and~$\zeta_2$ in the same way. If at some point the coupling fails, then we let the two processes evolve independently. Therefore we get
\begin{align}\label{eq:coupleunion}
\pr{\U(\tau_\alpha)\neq \W_\alpha} \leq \pr{\U(t_1)\neq V} +  \pr{|V| >n^{d-\alpha d+\epsilon}} + \pr{V\notin \SS(\gamma)} \\
+ \prcond{\exists i\leq |V|: \xi_i\neq \zeta_i}{\U(t_1)=V,|V|\leq n^{d-\alpha d+\epsilon},V\in \SS(\gamma)}{}.
\end{align}
Since $\E{|V|} = n^{d-\alpha d}$ by Markov's inequality we get as $n\to\infty$
\begin{align}\label{eq:normv}
\pr{|V|>n^{d-\alpha d+\epsilon}} = o(1).
\end{align}
Using Lemma~\ref{pro:separated} and~\eqref{eq:success} or by a straightforward calculation
we obtain  that for $\gamma\in(0,2(\alpha-\epsilon)-1)$
\begin{align}\label{eq:ssgamma}
\pr{V\notin\SS(\gamma)} \leq \pr{\U(t_1)\notin \SS(\gamma)} + o(1) = o(1).
\end{align}
By the union bound we now have
\begin{align*}
\prcond{\exists i\leq |V|: \xi_i\neq \zeta_i}{\U(t_1)=V,|V|\leq n^{d-\alpha d + \epsilon},V\in \SS(\gamma)}{} \leq n^{-\gamma(d-2)(1-\epsilon)}\left(n^{d-\alpha d+\epsilon}\right)^3 \\
= n^{-\gamma(d-2)(1-\epsilon) + 3d-3\alpha d+3\epsilon}.
\end{align*}
Using the expression for $\alpha_1(d)$ given in~\eqref{eq:defalphaone}, choosing $\epsilon$ sufficiently small and taking $\gamma = 2(\alpha - \epsilon) -1 - \epsilon$ give that 
the above quantity is~$o(1)$, since $\alpha-\epsilon>\alpha_1(d)$. This together with~\eqref{eq:coupleunion}, \eqref{eq:normv} and~\eqref{eq:ssgamma} implies that
 \begin{align*}
 \pr{\U(\tau_\alpha) \neq V} = o(1) \quad \text{as} \quad n\to \infty
 \end{align*}
and this concludes the proof.
\end{proof}

\begin{proof}[Proof of Theorem~\ref{thm:exact} Part II, existence of $\alpha_0(d)$]

The proof of this part follows in the same way as the proof of the existence of $\alpha_0(d)$ in Theorem~\ref{thm:uniform}. Let $\alpha_0(d)$ be as in Theorem~\ref{thm:uniform} and $\alpha>0$ with $\alpha+\epsilon<\alpha_0(d)$ with $\epsilon>0$ sufficiently small.
We let $Q_u=1(u\in \U(\tau_\alpha))$ and $Z_u=1(u\in \W_\alpha)$. Then  we define
\begin{align*}
W'=\sum_{x,y:\|x-y\|=1} \1(Q_x=Q_y=1) \quad \text{and} \quad U'=\sum_{x,y:\|x-y\|=1} \1(Z_x=Z_y=1).
\end{align*}
Then for all $x,y \in \Z_n^d$ distinct we have
\begin{align*}
\pr{Z_x=Z_y=1} = \frac{n^{d-\alpha d}}{n^d}\cdot \frac{n^{d-\alpha d}-1}{n^d-1},
\end{align*}
and hence $\E{U'} \asymp n^{d-2\alpha d}$. Let $t_1=(\alpha+\epsilon)t_*$. Then on the event $\{\tau_\alpha\leq t_1\}$ we have $W'\geq W$, where $W$ is defined in~\eqref{eq:defwpartii} in the proof of Theorem~\ref{thm:uniform} Part II. Take $\epsilon\in(0,2\alpha p_d/(1+p_d))$. Then we have
\begin{align*}
\|\LL(\U(\tau_\alpha)) - \LL(\W_\alpha)\|_{\rm{TV}} \geq \pr{W'\geq n^{d-2\alpha d/(1+p_d) -\epsilon d}} - \pr{U'\geq n^{d-2\alpha d/(1+p_d) -\epsilon d}}.
\end{align*}
By Markov's inequality we get
\begin{align*}
\pr{U'\geq n^{d-2\alpha d/(1+p_d) -\epsilon d}} = o(1) \quad \text{as} \quad n\to \infty,
\end{align*}
since $\epsilon\in(0,2\alpha p_d/(1+p_d))$. By Markov's inequality again we have
\begin{align*}
\pr{\tau_\alpha >t_1} = \pr{|\U(t_1)|>n^{d-\alpha d}} \leq \frac{1}{n^{\epsilon d}} = o(1) \quad \text{as} \quad  n\to \infty,
\end{align*}
where we used that $\E{|\U(t_1)|} \asymp n^{d-d(\alpha+\epsilon)}$. Therefore we get
\begin{align*}
\pr{W'\geq n^{d-2\alpha d/(1+p_d) -\epsilon d}}  \geq \pr{W'\geq n^{d-2\alpha d/(1+p_d) -\epsilon d}, \tau_\alpha<t_1 }\\
 \geq \pr{W\geq n^{d-2\alpha d/(1+p_d) -d\epsilon}} -o(1)  = 1 - o(1),
\end{align*}
where the last equality follows from~\eqref{eq:wgeq} in the proof of Theorem~\ref{thm:uniform} Part II and this concludes the proof.
\end{proof}

\section{Further questions}
\label{sec:questions}

$\ $

Throughout, we let $\ol{\alpha}_0(d)$ (resp.\ $\ol{\alpha}_1(d)$) be the largest (resp.\ smallest) value such that the assertions of~\eqref{eq:unif_a0}--\eqref{eq:exact_a1} hold.

\noindent{\bf Question 1.}
What are the precise values of $\ol{\alpha}_0(d)$ and $\ol{\alpha}_1(d)$?  Is it true that $\ol{\alpha}_0(d)$ corresponds to the threshold $\alpha_0(d)=(1+p_d)/2$ above which $\U(t)$ with high probability does not have neighbouring points while below which it does (as shown in Sections~\ref{sec:separated} and~\ref{sec:totalvar})?  Is there a phase transition: is it true that $\ol{\alpha}_0(d) = \ol{\alpha}_1(d)$?  Our lower bound $\alpha_0(d)$ for $\ol{\alpha}_0(d)$ converges to $\tfrac{1}{2}$ as $d \to \infty$.  Is this the correct asymptotic value of both $\ol{\alpha}_0(d)$ and $\ol{\alpha}_1(d)$ in the $d \to \infty$ limit (in agreement with the threshold for non-uniformity in the sense of \cite{MP_unif})?

\smallskip

\noindent{\bf Question 2.}  What is the asymptotic law of $\U(\alpha t_*)$ for $\alpha \in (0,\ol{\alpha}_0(d))$?  We proved in Theorem~\ref{thm:uniform} that $\U(\alpha t_*)$ for $\alpha \in (0,\alpha_0(d))$ is not uniformly random by showing that it contains more neighbours than a random subset of $\Z_n^d$ where points are included independently with probability $n^{-\alpha d}$.  The arguments of Section~\ref{sec:separated} generalize to give that for any $\alpha \in (0,1)$ there exists $k = k(\alpha)$ and $\gamma > 0$ such that each ball of radius~$n^\gamma$ contains at most~$k$ points with high probability.  This suggests that there is a way to describe $\U(\alpha t_*)$ by:
\begin{enumerate}[(i)]
\item sampling points in $\Z_n^d$ independently with probability $\asymp n^{-\alpha d}$ and then
\item decorating the neighbourhood of each such point in a given way.
\end{enumerate}

\smallskip

\noindent{\bf Question 3.}  For what class of graphs beyond $\Z_n^d$ for $d \geq 3$ do the results of Theorems~\ref{thm:uniform} and~\ref{thm:exact} also hold?

%

\appendix

\section{Elementary estimates}
\label{app:elementary}

We begin by recording a few elementary estimates for Markov chains and random walks.  Afterwards, we will give the proofs of several results stated in the text.  The following is a restatement of \cite[Proposition~3.3]{MP_unif}.

\begin{proposition}
\label{prop:mixing_decay}
Suppose that $p^s(\cdot,\cdot)$ denotes the transition kernel for a time-homogeneous Markov chain on a countable state space with a unique stationary distribution $\pi$.  For every $s,t \in \N$,
\begin{align}
 &\max_x \| p^{t+s}(x,\cdot) - \pi \|_{TV} \leq 4 \max_{x,y} \|p^t(x,\cdot) - \pi \|_{TV} \| p^s(y,\cdot) - \pi \|_{TV} \label{eqn:tv_decay}\\
 &\max_{x,y} \left| \frac{p^{t+s}(x,y)}{\pi(y)} - 1\right| \leq \max_{x,y} \frac{p^{s}(x,y)}{\pi(y)} \max_x \| p^{t}(x,\cdot) - \pi \|_{TV} \label{eqn:uniform_decay}.\
 \end{align}
\end{proposition}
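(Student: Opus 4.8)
The plan is to derive both estimates from the semigroup identity $p^{t+s}=p^t p^s$ together with the invariances $\pi p^s=\pi=\pi p^t$, using the Jordan decomposition of the signed measure $p^t(x,\cdot)-\pi$ into its nonnegative positive and negative parts. Throughout I would set $\epsilon_r:=\max_x\|p^r(x,\cdot)-\pi\|_{TV}$ and recall that for a probability measure $\mu$ one has $\|\mu-\pi\|_{TV}=\tfrac12\sum_z|\mu(z)-\pi(z)|$, and that if $\mu-\pi=\mu^+-\mu^-$ is the Jordan decomposition into nonnegative measures then $\mu^+(\Omega)=\mu^-(\Omega)=\|\mu-\pi\|_{TV}$. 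Since the state space is countable and $\sum_z|p^t(x,z)-\pi(z)|\le 2$, all series below are absolutely convergent and may be rearranged freely; if the right-hand side of \eqref{eqn:uniform_decay} is infinite there is nothing to prove, so I may and do assume $M:=\max_{z,y}p^s(z,y)/\pi(y)<\infty$.

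For \eqref{eqn:tv_decay} I would fix $x$, set $\mu=p^t(x,\cdot)$ and $\delta=\|\mu-\pi\|_{TV}$; assuming $\delta>0$ (the case $\delta=0$ being trivial) the measures $\bar\mu^{\pm}:=\delta^{-1}\mu^{\pm}$ are probability measures, and from $\pi p^s=\pi$,
\[
p^{t+s}(x,\cdot)-\pi=(\mu-\pi)p^s=\delta\big[(\bar\mu^+p^s-\pi)-(\bar\mu^-p^s-\pi)\big].
\]
The triangle inequality together with the convexity estimate $\|\nu p^s-\pi\|_{TV}\le\int\|p^s(z,\cdot)-\pi\|_{TV}\,d\nu(z)\le\epsilon_s$, valid for any probability measure $\nu$ (just bound $|\nu p^s(A)-\pi(A)|$ pointwise in $A$ and take the supremum), then gives $\|p^{t+s}(x,\cdot)-\pi\|_{TV}\le 2\delta\epsilon_s\le 2\|p^t(x,\cdot)-\pi\|_{TV}\,\epsilon_s$. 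Taking the maximum over $x$ yields \eqref{eqn:tv_decay} with constant $2$, which is stronger than the stated constant $4$.

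For \eqref{eqn:uniform_decay} I would instead fix $x,y$ and use $\pi p^s=\pi$ in the form $\pi(y)=\sum_z\pi(z)p^s(z,y)$ to write $p^{t+s}(x,y)-\pi(y)=\sum_z(p^t(x,z)-\pi(z))\,p^s(z,y)$, hence, with $g(z):=p^s(z,y)/\pi(y)\ge 0$,
\[
\frac{p^{t+s}(x,y)}{\pi(y)}-1=\sum_z\big(p^t(x,z)-\pi(z)\big)g(z)=\sum_z\mu^+(z)g(z)-\sum_z\mu^-(z)g(z),
\]
where $p^t(x,\cdot)-\pi=\mu^+-\mu^-$ is the Jordan decomposition, so $\mu^+(\Omega)=\mu^-(\Omega)=\|p^t(x,\cdot)-\pi\|_{TV}$. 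Both of the last two sums are nonnegative, so using $|a-b|\le\max(a,b)$ for $a,b\ge 0$ and then $\sum_z\mu^{\pm}(z)g(z)\le\mu^{\pm}(\Omega)\max_z g(z)$ gives $\big|p^{t+s}(x,y)/\pi(y)-1\big|\le\|p^t(x,\cdot)-\pi\|_{TV}\max_z g(z)$. Since $\max_z g(z)\le M$, taking the maximum over $x,y$ produces \eqref{eqn:uniform_decay}.

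I do not anticipate any substantive obstacle: the argument is entirely elementary and the only points requiring care are the manipulation of signed measures on a countably infinite state space (handled by the absolute convergence remarks above and by the reduction $M<\infty$) and keeping track of which of $s$ and $t$ plays which role in the two inequalities. As an alternative one may simply invoke \cite[Proposition~3.3]{MP_unif} verbatim.
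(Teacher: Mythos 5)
Your proof is correct. Note that the paper does not actually prove Proposition~\ref{prop:mixing_decay} at all: it is introduced as a restatement of \cite[Proposition~3.3]{MP_unif} and the proof is delegated to that reference, so your self-contained argument is a genuine addition rather than a reproduction. Both of your derivations are sound. For \eqref{eqn:tv_decay}, the identity $p^{t+s}(x,\cdot)-\pi=(\mu-\pi)p^s$ with $\mu=p^t(x,\cdot)$, the normalised Jordan decomposition $\mu-\pi=\delta(\bar\mu^+-\bar\mu^-)$, and the convexity bound $\|\nu p^s-\pi\|_{TV}\le\max_y\|p^s(y,\cdot)-\pi\|_{TV}$ for probability measures $\nu$ correctly yield the constant $2$, which indeed improves on the stated $4$ (the factor $4$ in the standard formulation comes from passing through $\bar d(t+s)\le\bar d(t)\bar d(s)$ and $d\le\bar d\le 2d$, whereas your direct argument only loses a factor $2$ once). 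For \eqref{eqn:uniform_decay}, the Chapman--Kolmogorov expansion together with $\pi(y)=\sum_z\pi(z)p^s(z,y)$, the nonnegativity of $g(z)=p^s(z,y)/\pi(y)$, and the bound $\sum_z\mu^{\pm}(z)g(z)\le\mu^{\pm}(\Omega)\max_zg(z)$ with $\mu^{\pm}(\Omega)=\|p^t(x,\cdot)-\pi\|_{TV}$ give exactly the claimed inequality; your reductions handling $M=\infty$ and the absolute convergence on a countable state space are appropriate. The only (harmless) caveat is that the ratio $p^s(x,y)/\pi(y)$ implicitly requires $\pi(y)>0$, but that is an issue with the statement itself rather than with your argument.
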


It is easy to see that the following result can be derived from \cite[Theorem~4.3.1]{LawLim}.

\begin{lemma}
\label{lem:green_kernel}
Let $G(x,y) = \estart{\sum_{t=0}^{\tunif}\1(X(t)=y)}{x}$, where $\tunif$ is the uniform mixing time of random walk on~$\Z_n^d$.  There exist constants $c_1,c_2 > 0$ depending only on $d \geq 3$ such that
\[ c_1 |x-y|^{2-d} \leq G(x,y) \leq c_2 |x-y|^{2-d} \quad\text{for all}\quad x,y \in \Z_n^d.\]
\end{lemma}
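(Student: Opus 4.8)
The plan is to estimate $G(x,y)=\sum_{s=0}^{\tunif}p^s(x,y)$, where $p^s(z,w)$ denotes the $s$-step transition probability of the walk on $\Z_n^d$, by comparison with simple random walk on $\Z^d$. We may assume $x\neq y$ and write $r=|x-y|$, so that $1\le r\le\sqrt{d}\,n/2$; recall that $\tunif\asymp n^2$, and note that throughout all implicit constants may depend on $d$. The inputs I would use are the standard Gaussian heat-kernel estimates for the walk on $\Z_n^d$ — obtained from the corresponding bounds on $\Z^d$ (for which see \cite[Theorem~4.3.1]{LawLim} and the local central limit theorem in \cite[Chapter~2]{LawLim}) by periodizing the $\Z^d$ heat kernel over $n\Z^d$ — namely that there are constants $c,C>0$ with
\[
p^s(z,w)\le C\,s^{-d/2}e^{-c|z-w|^2/s}\ \text{ for }1\le s\le n^2,\qquad p^s(z,w)\le C\,n^{-d}\ \text{ for }s\ge n^2,
\]
and
\[
p^s(z,w)\ge c\,s^{-d/2}\ \text{ whenever }1\le s\le\tunif\text{ and }|z-w|^2\le c^{-1}s;
\]
together with the elementary fact that $\sum_{s\ge1}s^{-d/2}e^{-c\rho^2/s}\asymp\rho^{2-d}$ for every $\rho\ge1$, which one proves by comparison with $\int_0^\infty u^{-d/2}e^{-c\rho^2/u}\,\ud u$. (If the walk is periodic one restricts throughout to times of the parity of $|z-w|_1$; note that finiteness of $\tunif$ presupposes aperiodicity in any case.) We will use repeatedly that $r\le\sqrt{d}\,n/2$, so that $n^{2-d}\lesssim r^{2-d}$.

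For the upper bound I would split the sum at $s=n^2$. The Gaussian upper bound and the elementary estimate give $\sum_{s=1}^{\,n^2\wedge\tunif}p^s(x,y)\le C\sum_{s\ge1}s^{-d/2}e^{-cr^2/s}\lesssim r^{2-d}$, and the on-diagonal bound gives $\sum_{n^2<s\le\tunif}p^s(x,y)\le C\,\tunif\,n^{-d}\lesssim n^{2-d}$; hence $G(x,y)\lesssim r^{2-d}+n^{2-d}\lesssim r^{2-d}$.

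For the lower bound I would set $s_\star=\min(r^2,\tunif/2)$, so that $s_\star\le\tunif$. For every integer $s\in[s_\star/2,s_\star]$ one has $r^2/s\le2r^2/s_\star\lesssim1$ (using $r^2\le\tfrac{d}{4}n^2\lesssim\tunif$), hence $|x-y|^2\le c^{-1}s$ after shrinking the constant $c$, and therefore $p^s(x,y)\ge c\,s^{-d/2}\gtrsim s_\star^{-d/2}$. Summing over the $\asymp s_\star$ such integers $s$ gives
\[
G(x,y)\ \ge\ \sum_{s_\star/2\le s\le s_\star}p^s(x,y)\ \gtrsim\ s_\star\cdot s_\star^{-d/2}\ =\ s_\star^{\,1-d/2}\ =\ \min(r^2,\tunif/2)^{1-d/2}\ \ge\ r^{2-d},
\]
the last step because $t\mapsto t^{1-d/2}$ is decreasing for $d\ge3$ and $s_\star\le r^2$. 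Together with the previous paragraph this gives $G(x,y)\asymp r^{2-d}=|x-y|^{2-d}$.

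I do not expect a genuine obstacle here: this is a routine assembly of heat-kernel estimates. The only points requiring a little care are that the implicit constants depend on $d$ (which the statement permits) and that the Gaussian bounds must be applied only in their range of validity — in particular, when $r\asymp n$ one cannot do better than $G(x,y)\asymp n^{2-d}$, and one has to note that this is $\asymp r^{2-d}$ precisely because $r\le\sqrt{d}\,n/2$. An alternative route — perhaps closer to what the phrase ``derived from \cite[Theorem~4.3.1]{LawLim}'' alludes to — is to couple the torus walk with a walk on $\Z^d$ inside an isometrically embedded box of side $\asymp n$ (inside which $y$ has a unique preimage) and to compare $G(x,y)$ with the killed $\Z^d$-Green's function of that box via the asymptotics of \cite[Theorem~4.3.1]{LawLim}; this is clean when $r=o(n)$ but requires additional care — controlling the trajectory that fails to exit the box before time $\tunif$ — in the regime $r\asymp n$, which is why I would favour the heat-kernel argument above.
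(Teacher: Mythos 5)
Your argument is correct, and it is a genuinely different route from the one the paper has in mind: the paper offers no proof of Lemma~\ref{lem:green_kernel}, asserting only that it "can be derived from \cite[Theorem~4.3.1]{LawLim}," i.e.\ from the asymptotics $G_{\Z^d}(x)=c_d|x|^{2-d}+O(|x|^{-d})$ of the full-space Green's function, by comparing the torus Green's function truncated at $\tunif$ with the $\Z^d$ one (your "alternative route" at the end). You instead sum Gaussian heat-kernel bounds directly: the upper bound splits the sum at $s=n^2$ and uses the off-diagonal Gaussian bound plus the on-diagonal bound $p^s\lesssim n^{-d}$, and the lower bound sums the near-diagonal lower bound over $\asymp s_\star$ times with $s_\star=\min(r^2,\tunif/2)$; both computations are carried out correctly, including the reduction of the $n^{2-d}$ contributions to $r^{2-d}$ via $r\lesssim n$. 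The trade-off is as you say: the Green's-function comparison is a one-line deduction when $|x-y|=o(n)$ but needs extra care in the regime $|x-y|\asymp n$ (and for the lower bound one must also rule out that the walk returns too efficiently through the periodic copies), whereas your heat-kernel assembly treats all scales up to $\sqrt{d}\,n/2$ uniformly at the cost of invoking the periodized local CLT as a black box. Your caveats about parity/aperiodicity and about excluding $x=y$ are both apt; neither is addressed in the paper. I see no gap.
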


The following is a standard hitting time estimate for random walk.

\begin{lemma}\label{lem:trRlem}
For all $r<n/4$ we have
\[
\max_{x\in\Z_n^d} \estart{\tau_{\partial\B(0,r)}}{x} \asymp \frac{n^d}{r^{d-2}}.
\]
\end{lemma}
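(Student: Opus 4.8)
The plan is to estimate $\tau := \tau_{\partial\B(0,r)}$ by comparing it with the occupation measure of the target set $A := \partial\B(0,r)$, using the Green kernel bound of Lemma~\ref{lem:green_kernel}. Write $N_A(T) = \sum_{s=0}^{T}\1(X(s)\in A)$ and recall $|A|\asymp r^{d-1}$ and $\pi(A)\asymp r^{d-1}/n^d$. The first step is the elementary observation that, for every starting point $y$,
\[
\estart{N_A(\tunif)}{y} \;=\; \sum_{a\in A}G(y,a) \;\asymp\; \sum_{a\in A}|y-a|^{2-d},
\]
and that this is $\lesssim r$ for every $y$ while being $\gtrsim |A|\,n^{2-d}\asymp r^{d-1}n^{2-d}$ for every $y$. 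The upper bound $\lesssim r$ follows by summing $|y-a|^{2-d}$ over the sphere $A$ via a dyadic decomposition in $|y-a|$: since $A$ behaves like a $(d-1)$-dimensional set, the part of $A$ within distance $r$ of $y$ contributes a geometric sum equal to $\asymp r$, and the part farther than $r$ contributes $\lesssim r^{d-1}\cdot r^{2-d}=r$; the lower bound holds since $|y-a|\le\operatorname{diam}\Z_n^d\asymp n$. In particular $\max_{z}\estart{N_A(\tunif)}{z}\asymp r$.

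For the \textbf{upper bound}, I would decompose, from an arbitrary start $y$,
\[
\estart{N_A(\tunif)}{y} \;=\; \estart{\1(\tau\le\tunif)\,\textstyle\sum_{s=\tau}^{\tunif}\1(X(s)\in A)}{y} \;\le\; \prstart{\tau\le\tunif}{y}\cdot\max_{z\in A}\estart{N_A(\tunif)}{z}
\]
using the strong Markov property at $\tau$, so that the bounds above give $\prstart{\tau\le\tunif}{y}\gtrsim r^{d-1}n^{2-d}/r = (r/n)^{d-2}$ uniformly in $y$. Iterating over successive blocks of length $\tunif\asymp n^2$ via the Markov property yields $\prstart{\tau>k\tunif}{x}\le(1-c(r/n)^{d-2})^{k}$, and summing gives $\estart{\tau}{x}\lesssim \tunif\,(n/r)^{d-2}\asymp n^{d}/r^{d-2}$ for every $x$.

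For the \textbf{lower bound} it suffices to bound $\estart{\tau}{\pi}$. By stationarity $\estart{N_A(T)}{\pi}=(T+1)\pi(A)\asymp Tr^{d-1}/n^d$, while the strong Markov property gives $\estart{N_A(T)}{\pi}\ge\prstart{\tau\le T/2}{\pi}\cdot\min_{z\in A}\estart{N_A(T/2)}{z}$. I would then show $\min_{z\in A}\estart{N_A(T/2)}{z}\gtrsim r$ as soon as $T\gtrsim r^2$: the expected occupation of $A$ before the walk exits $\B(0,2r)$ is $\asymp r$ (again by Lemma~\ref{lem:green_kernel}, applied to the walk killed on $\partial\B(0,2r)$, keeping only $a\in A$ near $z$), and truncating at time $T/2$ costs a negligible amount because the exit time of $\B(0,2r)$ has exponential tails on the scale $r^2$. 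Combining the two displays gives $\prstart{\tau\le T/2}{\pi}\lesssim Tr^{d-2}/n^d$; choosing $T\asymp n^{d}/r^{d-2}$ with a sufficiently small constant forces $\prstart{\tau\le T/2}{\pi}\le\tfrac12$, hence $\estart{\tau}{\pi}\ge(T/2)\prstart{\tau>T/2}{\pi}\gtrsim n^{d}/r^{d-2}$, and therefore $\max_x\estart{\tau}{x}\gtrsim n^{d}/r^{d-2}$.

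The step I expect to be the main obstacle is the two-sided, uniform-in-$r$ control of the per-visit occupation time $\estart{N_A(\cdot)}{z}\asymp r$ for $z\in A$ --- both the upper bound on $\max_z$ (used for the upper bound on $\estart{\tau}{x}$) and the lower bound on $\min_z$ after truncation (used for the lower bound) --- which is exactly where the geometry of the sphere and Lemma~\ref{lem:green_kernel} enter; the remaining steps are routine iteration and a one-line Markov/Chebyshev estimate. There is also a mild bookkeeping point: the choice $T\asymp n^{d}/r^{d-2}$ in the lower bound satisfies $T\gtrsim r^2$ only once $(n/r)^d$ exceeds a dimension-dependent constant; in the remaining range, where $r\asymp n$ and $n^{d}/r^{d-2}\asymp n^2$, the trivial estimate $\estart{\tau}{x}\gtrsim n^2$ from an antipodal starting point (the walk must travel distance $\gtrsim n$ to reach $A$) already gives the right order. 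Working directly with the sphere $A=\partial\B(0,r)$ rather than the solid ball changes nothing: one replaces $r^{d-1}$ by $r^d$ and the per-visit occupation $r$ by $r^2$, and the final ratio --- essentially the capacity $\asymp r^{d-2}$ --- is unchanged.
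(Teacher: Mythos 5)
The paper states this lemma without proof, introducing it only as ``a standard hitting time estimate,'' so there is no in-paper argument to compare against; your job was therefore to supply the standard proof, and you have done so correctly. The occupation-time scheme is sound in both directions: the identity $\estart{N_A(\tunif)}{y}=\sum_{a\in A}G(y,a)$ combined with the two-sided bound of Lemma~\ref{lem:green_kernel} and the dyadic count of sphere points ($\asymp\rho^{d-1}$ within distance $\rho\le r$ of a point of $\partial\B(0,r)$) gives $\prstart{\tau\le\tunif}{y}\gtrsim (r/n)^{d-2}$ uniformly, and iterating over blocks of length $\tunif\asymp n^2$ yields the upper bound $n^d/r^{d-2}$; for the lower bound, the stationarity identity $\estart{N_A(T)}{\pi}=(T+1)\pi(A)$ against $\min_{z\in A}\estart{N_A(T/2)}{z}\gtrsim r$ (killed Green's function plus the exponential tail of the exit time of $\B(0,2r)$ on scale $r^2$) correctly forces $\prstart{\tau\le T/2}{\pi}\le\tfrac12$ at $T\asymp n^d/r^{d-2}$, and $\max_x\estart{\tau}{x}\ge\estart{\tau}{\pi}$ closes the argument. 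You also correctly identified and patched the only real edge case, namely $r\asymp n$ where $T\gtrsim r^2$ can fail and the diffusive bound $\estart{\tau}{x}\gtrsim n^2$ from a far starting point suffices. The two points you flagged as delicate (the uniform $\asymp r$ per-visit occupation of the discrete sphere, in both directions) are exactly the right ones to worry about, and your treatment of them is adequate; no gap remains.
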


\begin{proof}[Proof of Lemma~\ref{lem:coupling}]
It is clear that the sequence of exit points is a Markov chain. Since it is irreducible on a finite state space, it has a unique invariant distribution $\til{\pi}$. 

Fix $y\in \partial\ball{R}$.
We let $f(x) = \prcond{X(\tau_R) = y}{X(\tau_r)=x}{}$. Then $f$ is a harmonic function and since $r/R\geq 10$, then $\ball{r}$ or $\SS(0,r)$ are separated from $\partial \ball{R}$, so we can apply Harnack's inequality (Lemma~\ref{lem:harnackineq}) and thus we get a constant $c\geq 1$ such that for all $x,z\in \partial \ball{r}$ or $x,z \in \partial \SS(0,r)$ we have
\[
\frac{1}{c}f(z) \leq f(x) \leq c f(z)
\]
uniformly over all $y\in\partial\ball{R}$.
From that it follows that if $\nu_x$ is the law of $Y_j$ given that $Y_{j-1}=x$, then for all $x,z\in \partial \ball{R}$
\[
\frac{1}{c}\nu_z \leq \nu_x\leq c \nu_z.
\]
By using the optimal coupling between $\nu_x$ and $\nu_y$ we get that for all $x,y$
\[
\|\nu_x-\nu_y\|_{\rm{TV}} =1 -  \sum_{z}\nu_x(z)\wedge \nu_y(z) \leq 1-\frac{1}{c}\sum_{z}\nu_x(z) = 1-\frac{1}{c}.
\]
Therefore, since $\bar{d}(t)$ (defined in~\cite[Section~4.4]{LPW})) is sub-multiplicative, we get that for all $t$
\[
\bar{d}(t) \leq \left(1-\frac{1}{c} \right)^t.
\]
This now immediately gives that $\tmix=k_0<\infty$ and independent of the size of the state space. 

Let $\mu$ denote the law of $(Y_N,\ldots,Y_{mN})$, then we have
\begin{align*}
\|\mu-\til{\pi}^{\otimes m}\|_{\rm{TV}} &= \frac{1}{2}\sum_{y_1,\ldots,y_m}|\mu(y_1,\ldots,y_m) - \til{\pi}(y_1)\cdots\til{\pi}(y_m)| \\
&= \frac{1}{2}\sum_{y_1,\ldots,y_m}|\mu(y_m|y_1,\ldots,y_{m-1})\cdots \mu(y_2|y_1)\mu(y_1) - \til{\pi}(y_1)\cdots\til{\pi}(y_m)|,
\end{align*}
where we write $\mu(y_j|y_1,\ldots,y_{j-1})$ for the conditional probability that $Y_{jN}=y_j$ given $Y_{iN}=y_i$ for all $1\leq i\leq j-1$.
Using~Proposition~\ref{prop:mixing_decay} we get 
\[
\frac{\mu(y_{j}|y_1,\ldots,y_{j-1})}{\til{\pi}(y_j)} = 1+O(e^{-cN}).
\]
Substituting in the formula above we get for $me^{-N}<1$
\begin{align*}
\|\mu-\til{\pi}^{\otimes m}\|_{\rm{TV}} = \frac{1}{2}\sum_{y_1,\ldots,y_m}\til{\pi}(y_1)\cdots \til{\pi}(y_m)\left| (1+O(e^{-cN}))^m - 1 \right| \lesssim me^{-cN},
\end{align*}
where in the last step we used that
\[
e^x-1\leq 10x \ \text{ for } \ x<1
\]
and this completes the proof of the lemma.
\end{proof}

\begin{proof}[Proof of Claim~\ref{cl:geom}]

Since $p\in(0, 1/2]$, we have that $p/(1-p)\leq 1$, and hence
\[
\E{X^j} = \sum_{x=1}^{\infty} x^j(1-p)^{x-1}p \leq  \sum_{x=1}^{\infty} x^je^{-px}.
\]
We are now going to compare the sum appearing on the right hand side above to the integral $\int_{1}^{\infty} x^je^{-px}\,dx$. The function $f(x) = x^j e^{-px}$ is increasing for $x\leq j/p$ and decreasing for $x>j/p$. We thus have
\begin{align*}
\int_{1}^{[j/p]} x^j e^{-xp}\,dx = \sum_{k=1}^{[j/p]-1}\int_{k}^{k+1}x^je^{-xp}\,dx 
\geq \sum_{k=1}^{[j/p]-1} k^je^{-kp}
\end{align*}
and 
\begin{align*}
&\int_{[j/p]+1}^{\infty} x^j e^{-xp}\,dx = \sum_{k=[j/p]+1}^{\infty}\int_{k}^{k+1} x^je^{-xp}\,dx\\
\geq& \sum_{k=[j/p]+1}^{\infty} (k+1)^je^{-(k+1)p} 
= \sum_{k=[j/p]+2}^{\infty}k^je^{-kp}.
\end{align*}
Therefore we get
\begin{align*}
\sum_{k=1}^{\infty} k^je^{-kp} =  \sum_{k=1}^{[j/p]-1} k^je^{-kp} + f([j/p]) + f([j/p]+1) +  \sum_{k=[j/p]+2}^{\infty}k^je^{-kp}.
\end{align*}
Since the function $f$ achieves its maximum at $j/p$ we have that $f(x) \leq (j/p)^je^{-j}$ for all $x$. Using the above inequalities we get
\begin{align*}
\sum_{k=1}^{\infty} k^je^{-kp} \leq 2(j/p)^je^{-j} + \int_{1}^{\infty} x^je^{-xp}\,dx.
\end{align*}
It is easy to see that the integral appearing above is equal to $j!/p^j$ (it is the Gamma function), and using Stirling's formula we get
\[
\sum_{k=1}^{\infty} k^je^{-kp} \lesssim \frac{j!}{p^j}
\]
and this finishes the proof of the claim.
\end{proof}

\begin{proof}[Proof of Lemma~\ref{lem:harnackineq}]

By~\cite[Theorem~6.3.8, equation~(6.19)]{LawLim} and using the fact that $R>2r$ we get that there exists a universal constant $c_1$ such that for all $u,v \in \B(0,r)$ with $\|u-v\|=1$ 
\begin{align}\label{eq:diff}
|f(u)-f(v)| \leq c_1 \frac{f(u)}{R}.
\end{align}
By Harnack's inequality (see for instance~\cite[Theorem~6.3.9]{LawLim}) we get for a universal constant~$c_2$ that
\begin{align}\label{eq:harn}
\max_{u\in \B(0,r)}f(u) \leq c_2 f(y).
\end{align}
Let $u_0=x,u_1,\ldots,u_{\ell-1}, u_\ell=y$ be the shortest path from $x$ to $y$ such that $\|u_{i+1} -u_i\|=1$ for all~$i$. Notice that the assumption~$x,y\in \B(0,r)$ gives that $\ell\leq 2r$ and $u_i\in \ball{r}$ for all~$i$.
We thus obtain
\begin{align*}
\left|\frac{f(x)}{f(y)} -1
\right| = \frac{|f(x) - f(y)|}{f(y)}  \leq \sum_{i=0}^{\ell-1}\frac{|f(u_{i+1}) - f(u_i)|}{f(y)} \leq \sum_{i=0}^{\ell-1}\frac{c_1 f(u_i)}{Rf(y)}  \leq \frac{2c_1c_2r}{R},
\end{align*}
where in the second inequality we used~\eqref{eq:diff} and for the last one we used~\eqref{eq:harn}. Therefore we deduce
\begin{align}\label{eq:rR}
\frac{f(x)}{f(y)} = 1+O\left(\frac{r}{R} \right)
\end{align}
and this concludes the proof.
\end{proof}

\begin{proof}[Proof of Lemma~\ref{lem:hittingprob}]

Let $G$ be the Green kernel for simple random walk in $\Z^d$. Then by~\cite[Theorem~4.3.1]{LawLim} we have that as $\|x\|\to \infty$, then
\[
G(x) = \frac{c_d}{\|x\|^{d-2}} + O\left(\frac{1}{\|x\|^d}\right),
\]
where $c_d$ is a constant that only depends on the dimension~$d$.
By Bayes' formula we have
\begin{align}\label{eq:bayes}
\prcond{\tau_z<\tau_R}{X(\tau_R)=y}{x} = \frac{\prcond{X(\tau_{R})=y}{\tau_z<\tau_R}{x}\prstart{\tau_z<\tau_R}{x}}{\prstart{X(\tau_R)=y}{x}}.
\end{align}
We now treat the term $\prstart{\tau_z<\tau_R}{x}$ and the ratio $\prcond{X(\tau_{R})=y}{\tau_z<\tau_R}{x}/\prstart{X(\tau_R)=y}{x}$ separately.
By transitivity in expressions involving the Green kernel we will take~$z=0$. However, $\norm{z}$ refers to the setting without the translation. Since the Green kernel is harmonic outside of $0$, we can apply the optional stopping theorem to get
\begin{align*}
G(x) = G(0) \prstart{\tau_z<\tau_R}{x} + \escond{G(X(\tau_R))}{\tau_R<\tau_z}{x} \left(1- \prstart{\tau_z<\tau_R}{x}\right).
\end{align*}
Since $R-\|z\|\leq \|X(\tau_R)\| \leq R+\|z\|$ and $r-\|z\|\leq \|x\| \leq r+\|z\|$ and we have that $\|z\|\leq r/4$, $r, R\to \infty$ as $n\to \infty$ by substituting in the asymptotic expression for the Green kernel, we get
\begin{align}\label{eq:hitz}
\prstart{\tau_z<\tau_R}{x} = \frac{c_d}{G(0)r^{d-2}} \left( 1+O\left(\left(\frac{r}{R}\right)^{d-2}\right)+O\left(\frac{1}{r^2} \right) +O\left(\frac{\norm{z}}{r} \right) \right).
\end{align}
Now it remains to bound the ratio 
\begin{align*}
\frac{\prcond{X(\tau_R)=y}{\tau_z<\tau_R}{x}}{\prstart{X(\tau_R)=y}{x}} = \frac{\prstart{X(\tau_R)=y}{z}}{\prstart{X(\tau_R)=y}{x}},
\end{align*}
where the equality follows by the strong Markov property. If we set $f(w) = \prstart{X(\tau_R)=y}{w}$, then it is easy to check that $f$ is harmonic in $\ball{R}$. 
Since by assumption $x,z\in \B(0,r)$ Lemma~\ref{lem:harnackineq} gives
\begin{align}\label{eq:rR}
\frac{f(z)}{f(x)} = 1+O\left(\frac{r}{R} \right)
\end{align}
Plugging~\eqref{eq:hitz} and~\eqref{eq:rR} into~\eqref{eq:bayes} and setting $\crw=c_d/G(0)$ 
gives
\begin{align*}
\prcond{\tau_z<\tau_R}{X(\tau_R)=y}{x} = \frac{\crw}{r^{d-2}}\left(1+O\left(\frac{r}{R} \right) +O\left(\frac{1}{r^2} \right)+O\left(\frac{\norm{z}}{r}
\right)\right) 
\end{align*}
and this concludes the proof.
\end{proof}

\section{Proof of Lemma~4.1}
\label{app:hitting}

We start with some preliminary results. Throughout we assume that $R=o(n)$ and $R\geq 2r$.
First we let $\tau=\sigma_1-\sigma_0$, where the $\sigma_i$'s are defined in Section~\ref{sec:excursions} and we take $F(x,R)= \B(0,R)$ and $E(x,r)= \B(0,r)$. 
We start by proving that up to small error the expectation of $\tau$ does not depend on the starting point of $X$ on $\partial\B(0,R)$.

\begin{proposition}
\label{prop:excursion_length_starting_point}
There exist constants $c_1,c_2 > 0$ such that for all $u,v \in \partial\B(0,R)$ we have
\[  \left|\frac{\estart{\tau}{u}}{\estart{\tau}{v}} -1\right| \leq c_1\left(\frac{R}{n}\right)^{c_2}.\]
\end{proposition}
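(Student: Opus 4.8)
The plan is to reduce, via the strong Markov property, to a comparison of the hitting time of the inner sphere $\partial\B(0,r)$, and then to force the walk to ``forget'' its starting point by letting it travel out to the macroscopic scale.

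First I would write $H=\inf\{t\ge0:X(t)\in\partial\B(0,r)\}$ and $\hat H=\inf\{t\ge0:X(t)\notin\B(0,R)\}$. Starting from $X(\sigma_0)=u\in\partial\B(0,R)$ the strong Markov property gives $\estart{\tau}{u}=\estart{H}{u}+\sum_{w\in\partial\B(0,r)}\prstart{X(H)=w}{u}\estart{\hat H}{w}$. By optional stopping applied to $|X(t)|^2-t$ inside $\B(0,R)$, $\estart{\hat H}{w}\lesssim R^2$ for every $w$, while by Lemma~\ref{lem:trRlem} and an elementary matching lower bound (from $u\in\partial\B(0,R)$ the walk escapes to distance $\asymp n$ from the origin before hitting $\partial\B(0,r)$ with probability bounded below, and from a macroscopic point the small sphere takes $\gtrsim n^d/r^{d-2}$ steps in expectation to hit) one has $\estart{H}{u}\asymp n^d/r^{d-2}$ uniformly over $u\in\partial\B(0,R)$. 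Hence the second term contributes a relative error $\lesssim R^2/(n^d/r^{d-2})\le (R/n)^d$, and it suffices to show $|\estart{H}{u}-\estart{H}{v}|\lesssim (R/n)^{c_2}\,n^d/r^{d-2}$ for all $u,v\in\partial\B(0,R)$.

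To do this I would decompose $\estart{H}{u}$ at the first exit time $\rho$ of the macroscopic ball $\B(0,n/4)$: $\estart{H}{u}=\estart{H\wedge\rho}{u}+\sum_{z\in\partial\B(0,n/4)}\prstart{X(\rho)=z,\,\rho<H}{u}\estart{H}{z}$. Here $\estart{H\wedge\rho}{u}\le\estart{\rho}{u}\lesssim n^2$, which is lower order since $n^2/(n^d/r^{d-2})\le(R/n)^{d-2}$. For the sum I would show that $\estart{H}{z}=(1+O((R/n)^{c}))\,\estart{H}{\pi}$ uniformly over $z\in\partial\B(0,n/4)$, where $\pi$ denotes the uniform measure: for $z$ at distance $\asymp n$ from $\partial\B(0,r)$, Lemma~\ref{lem:green_kernel} together with $\max_y\estart{H}{y}\lesssim n^d/r^{d-2}$ bounds the contribution of the event that $\partial\B(0,r)$ is hit before a suitable time $T$ that is a slowly growing multiple of $\tunif$, and on the complementary event one conditions on $X(T)$, whose law is within $e^{-c\,T/\tunif}$ of $\pi$ in the uniform metric by Proposition~\ref{prop:mixing_decay}. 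This yields $\estart{H}{u}=\estart{H}{\pi}\,\prstart{\rho<H}{u}+O((R/n)^{c_2}\,n^d/r^{d-2})$, so the whole problem is reduced to controlling $|q(u)-q(v)|$, where $q(w):=\prstart{H<\rho}{w}$, and multiplying by $\estart{H}{\pi}\asymp n^d/r^{d-2}$.

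The crux is therefore the comparison of the escape probabilities $q(u)$ and $q(v)$ for $u,v\in\partial\B(0,R)$. The function $q$ is discrete harmonic in the annulus $\B(0,n/4)\setminus\B(0,r)$, equal to $1$ on $\partial\B(0,r)$ and $0$ on $\partial\B(0,n/4)$, so by the method used to prove Lemma~\ref{lem:hittingprob} --- Bayes' formula, optional stopping with the $\Z^d$ Green's function together with its sharp expansion $G(x)=c_d|x|^{2-d}+O(|x|^{-d})$ from~\cite[Theorem~4.3.1]{LawLim}, and Harnack's inequality (Lemma~\ref{lem:harnackineq}) --- one should obtain, uniformly in $w\in\partial\B(0,R)$, that $q(w)\asymp(r/R)^{d-2}$ with leading term depending on $w$ only through $|w|=R$ and with relative error $O(r/R)+O((R/n)^{d-2})+O(1/R^2)$. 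This gives $|q(u)-q(v)|\lesssim (r/R)^{d-2}\big((R/n)^{d-2}+R^{-2}\big)$, hence $|q(u)-q(v)|\,\estart{H}{\pi}\lesssim \big((R/n)^{d-2}+R^{-2}\big)\,n^d/r^{d-2}$, which is of the required form; combining with the previous paragraphs completes the proof.

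The main obstacle is this last step: establishing the sharp, angle-uniform expansion of $q$ on $\partial\B(0,R)$ --- in particular that the leading $O(1)$-order behaviour is genuinely rotationally symmetric, so that the corrections (which are honest negative powers of $R$) control $|q(u)-q(v)|$ even when $r$ and $R$ are comparable and $q$ is of order $1$. A secondary technical point is to make all the error terms uniform over the admissible range of $R$; this is cleanest, and the errors are manifestly of the stated form, when $r$ and $R$ are polynomially smaller than $n$, which is the regime in which the proposition is applied.
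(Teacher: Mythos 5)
Your reductions are sound and run parallel to the paper's: both arguments isolate an $O(n^2)$ additive contribution from the initial and final legs of the excursion and reduce the problem to showing that the walk forgets its starting point on $\partial\B(0,R)$. The divergence, and the gap, lies in how that forgetting is established. The paper conditions on the escape event $E_{r,Q}$ (with $Q=n/2$) and couples the \emph{conditional exit distributions} on $\partial\B(0,Q)$ started from $u$ and from $v$: Harnack at a single doubling of scale gives a coupling probability bounded below by a constant (Lemma~\ref{lem:exit_distribution_intermediate}), and iterating over the $k\asymp\log_2(n/R)$ dyadic scales between $R$ and $n$ gives a total variation error $\rho_1^{k-1}\asymp(R/n)^{c_2}$ (Lemma~\ref{lem::exit_distribution}). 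This decay, geometric in the number of scales, is precisely what produces an error that is a power of $R/n$ carrying no inverse powers of $r$ or $R$.

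Your crux step does not deliver this. You reduce to bounding $|q(u)-q(v)|$ with $q(w)=\prstart{H<\rho}{w}$ and propose to control it by a sharp Green's-function expansion of $q$ on $\partial\B(0,R)$. But the angular variation of $q$ over the discrete sphere is governed by lattice effects and subleading terms: the points of $\partial\B(0,R)$ have norms differing by $O(1)$, so $G(u)-G(v)=O(R^{1-d})$, and the conditional hitting measure on $\partial\B(0,r)$ depends on the starting point while $G$ varies over $\partial\B(0,r)$ by a relative $O(1/r)$; together these give $|q(u)-q(v)|\lesssim (r/R)^{d-2}\left(1/r+1/R\right)$, i.e.\ inverse powers of the radii rather than powers of $R/n$. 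Indeed your own error list contains $O(r/R)$, which under the standing assumption $R\geq 2r$ is only guaranteed to be at most $1/2$, and this term is silently dropped in the following display. Multiplying by $\estart{H}{\pi}\asymp n^d/r^{d-2}$ then leaves a relative error of order $r^{d-3}/R^{d-2}\leq 1/R$, which is not $\lesssim (R/n)^{c_2}$ uniformly over the admissible range of $r,R$ (it is not even $o(1)$ when the radii stay bounded). So the assertion ``which is of the required form'' is unjustified; closing it requires the scale-iteration coupling of Lemmas~\ref{lem:exit_distribution_intermediate}--\ref{lem::exit_distribution} (or an equivalent device) rather than pointwise Green's-function asymptotics. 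You have, to be fair, located the genuine crux --- the paper's displayed identity for $\estart{\xi\1(E_{r,Q})}{y}$ suppresses the factor $\prstart{E_{r,Q}}{y}$, which is exactly your $1-q(y)$, so a comparison of escape probabilities is implicitly needed there as well --- but the tool you propose cannot produce an error of the stated form.
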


We prove the above proposition after establishing the following two lemmas.

\begin{lemma}
\label{lem:exit_distribution_intermediate}
There exists a constant $C > 1$ such that the following is true.  Suppose that $Q_1 < Q_2$ with $Q_1 \geq 2r$ and $Q \geq Q_2 \geq 2 Q_1$.  Let~$E_{r,Q} = \{\tau_{\partial\B(0,Q)} <\tau_{\partial\B(0,r)}\}$ and~$\sigma=\min\{t\geq 0: X(t) \in \partial\B(0,Q_2)\}$. Then
\[ \frac{1}{C} \leq \frac{\prcond{X(\sigma) = w}{E_{r,Q}}{u}}{\prcond{X(\sigma) = w}{E_{r,Q}}{v}} \leq C \quad\text{for all}\quad u,v \in \partial \B(0,Q_1) \quad\text{and}\quad w \in \partial \B(0,Q_2).\]
\end{lemma}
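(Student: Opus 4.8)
The plan is to reduce the statement to a Harnack inequality for a single nonnegative harmonic function on the annulus between the spheres of radii $r$ and $Q_2$, evaluated along the intermediate sphere of radius $Q_1$. Since the walk has unit steps and the annulus $\{Q_1 \le |x| \le Q_2\}$ has width $Q_2 - Q_1 \ge Q_1 \ge 2r$, on the event $E_{r,Q}$ the walk must hit $\partial\B(0,Q_2)$ before it reaches $\partial\B(0,Q)$, hence before $\partial\B(0,r)$; so $E_{r,Q} \subseteq \{\sigma < \tau_{\partial\B(0,r)}\}$ and on $E_{r,Q}$ we have $\sigma < \tau_{\partial\B(0,r)} \wedge \tau_{\partial\B(0,Q)}$. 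Applying the strong Markov property at $\sigma$ together with Bayes' formula, I would obtain for $x \in \partial\B(0,Q_1)$ and $w \in \partial\B(0,Q_2)$
\[ \prcond{X(\sigma) = w}{E_{r,Q}}{x} = \frac{h_w(x)\, g(w)}{\sum_{w' \in \partial\B(0,Q_2)} h_{w'}(x)\, g(w')}, \]
where $h_w(x) := \prstart{X(\sigma) = w,\ \sigma < \tau_{\partial\B(0,r)}}{x}$ and $g(w) := \prstart{\tau_{\partial\B(0,Q)} < \tau_{\partial\B(0,r)}}{w}$ does not depend on the starting point. Consequently
\[ \frac{\prcond{X(\sigma) = w}{E_{r,Q}}{u}}{\prcond{X(\sigma) = w}{E_{r,Q}}{v}} = \frac{h_w(u)}{h_w(v)} \cdot \frac{\sum_{w'} h_{w'}(v)\, g(w')}{\sum_{w'} h_{w'}(u)\, g(w')}, \]
so it is enough to show that $h_w(u) \asymp h_w(v)$ for all $u,v \in \partial\B(0,Q_1)$ and all $w \in \partial\B(0,Q_2)$, with an implicit constant depending only on $d$.

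For fixed $w$, the function $x \mapsto h_w(x)$ is nonnegative and harmonic on the open annulus $\{r < |x| < Q_2\}$, with boundary values $0$ on $\partial\B(0,r)$ and $\1(x=w)$ on $\partial\B(0,Q_2)$; it is strictly positive on $\partial\B(0,Q_1)$ since $\Z^d$ is transient for $d \ge 3$, so every point of $\partial\B(0,Q_2)$ is reached with positive probability. The sphere $\partial\B(0,Q_1)$ is well separated from both boundary components: using $Q_1 \ge 2r$ its distance to $\partial\B(0,r)$ is $\gtrsim Q_1$, and using $Q_2 \ge 2Q_1$ its distance to $\partial\B(0,Q_2)$ is $\gtrsim Q_1$. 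Hence there is an absolute constant $c_0 > 0$ such that for every $x_0 \in \partial\B(0,Q_1)$ the ball $\B(x_0, c_0 Q_1)$ lies inside the annulus, so Harnack's inequality (Lemma~\ref{lem:harnackineq}) applied to $h_w$ on $\B(x_0, c_0 Q_1)$ gives $h_w(x)/h_w(y) \asymp 1$ for $x,y \in \B(x_0, c_0 Q_1/2)$. Since $\partial\B(0,Q_1)$ is connected and can be covered by a chain of $N_d = O_d(1)$ such half-balls, iterating this bound yields $h_w(u)/h_w(v) \le C_d$ for all $u,v \in \partial\B(0,Q_1)$, uniformly in $w$. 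Plugging this back in: the first factor in the displayed ratio lies in $[C_d^{-1}, C_d]$, and since $h_{w'}(v) \le C_d\, h_{w'}(u)$ for every $w'$ the second factor also lies in $[C_d^{-1}, C_d]$, so the whole ratio lies in $[C_d^{-2}, C_d^{2}]$ and we may take $C = C_d^2$.

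The main obstacle I anticipate is the Harnack chaining step: one must check carefully that the balls centred at points of $\partial\B(0,Q_1)$ genuinely sit inside the domain of harmonicity of $h_w$ --- this is precisely what the hypotheses $Q_1 \ge 2r$ and $Q_2 \ge 2Q_1$ are there to guarantee --- and that a number of them depending only on $d$ suffices, so that the accumulated constant is dimensional. A minor point is that Lemma~\ref{lem:harnackineq} is asymptotic, so one really wants $r \to \infty$ (which holds in the applications, where $r$ is a power of $n$); the finitely many configurations with $r$ bounded can be absorbed into the constant by a direct argument, or one can invoke the non-asymptotic discrete Harnack inequality instead.
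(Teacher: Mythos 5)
Your proposal is correct and follows essentially the same route as the paper: the paper writes the conditional probability as the ratio of $f(u)=\prstart{X(\sigma)=w,\,E_{r,Q}}{u}$ and $g(u)=\prstart{E_{r,Q}}{u}$, both positive and harmonic on $\B(0,Q_2)\setminus\B(0,r)$, and bounds each ratio $f(u)/f(v)$, $g(v)/g(u)$ by Harnack's inequality, exactly as you do after your (equivalent) strong-Markov factorization at $\sigma$. Your explicit Harnack chaining along $\partial\B(0,Q_1)$ with $O_d(1)$ balls is in fact more careful than the paper, which invokes Lemma~\ref{lem:harnackineq} on the annulus without spelling out this step.
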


\begin{proof}
Note that the functions 
\begin{align*}
u \mapsto f(u) := \prstart{X(\sigma) =w, E_{r,Q}}{u} \quad\text{and}\quad u \mapsto g(u) := \prstart{E_{r,Q}}{u}
\end{align*}
are harmonic in $\B(0,Q_2) \setminus \B(0,r)$.  Consequently, it follows from Harnack's inequality (Lemma~\ref{lem:harnackineq}) that there exists a constant $C_1 \geq 1$ such that
\[ \frac{1}{C_1} \leq \frac{h(u)}{h(v)} \leq C_1 \quad\text{for}\quad h=f,g \quad\text{and all} \quad u,v \in \partial \B(0,Q_1) \quad\text{and}\quad w \in \partial \B(0,Q_2).\]
Since we have
\[ \frac{\prcond{X(\sigma) = w}{E_{r,Q}}{u}}{\prcond{X(\sigma) = w}{E_{r,Q}}{v}} = \frac{f(u)/g(u)}{f(v)/g(v)} = \frac{f(u)}{f(v)} \cdot \frac{g(v)}{g(u)}.\]
by taking $C = C_1^2$ proves the statement of the lemma.
\end{proof}

\begin{lemma}
\label{lem::exit_distribution}

Let $E_{r,Q}$ be as in Lemma~\ref{lem:exit_distribution_intermediate}, where $Q=2^kR$ for some $k$ and let $\sigma$ be the first time that $X$ hits $\partial\B(0,Q)$. There exist constants $c_1,c_2 > 0$ such that the following is true:
\[ \left| \frac{\prcond{ X(\sigma) = w}{E_{r,Q}}{u}}{\prcond{X(\sigma) = w}{E_{r,Q}}{v}} - 1 \right| \leq c_1 \left(\frac{R}{Q} \right)^{c_2} \quad\text{for all}\quad u,v \in \partial \B(0,R) \quad\text{and}\quad w \in \partial \B(0,Q).\]
\end{lemma}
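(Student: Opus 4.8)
The plan is to prove Lemma~\ref{lem::exit_distribution} by iterating Lemma~\ref{lem:exit_distribution_intermediate} across dyadic scales and showing that the Harnack constant $C$ can be improved to $1+O((R/Q)^{c_2})$ once we pass through many scales. The rough idea is the standard ``coupling across annuli'' or ``boundary Harnack'' argument: the discrepancy in the exit law coming from two different starting points on $\partial\B(0,R)$ contracts by a fixed factor each time the walk crosses a dyadic annulus $\B(0,2^{j+1}R)\setminus\B(0,2^jR)$, conditioned on the event $E_{r,Q}$, and there are $\asymp \log_2(Q/R)$ such annuli to cross before reaching $\partial\B(0,Q)$.

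First I would set up the dyadic decomposition: write $Q=2^kR$ and for $0\le j\le k$ let $\sigma_j=\min\{t\ge 0: X(t)\in\partial\B(0,2^jR)\}$, so $\sigma_0$ is the start and $\sigma_k=\sigma$. For a fixed target point $w\in\partial\B(0,Q)$ and a fixed starting point, consider the sequence of conditional laws $\mu_j(\cdot) = \prcond{X(\sigma_j)=\cdot}{E_{r,Q}}{\,\cdot\,}$ of the location at scale $2^jR$, conditioned on $E_{r,Q}$. Using the strong Markov property at $\sigma_j$ together with Lemma~\ref{lem:exit_distribution_intermediate} applied at scales $Q_1=2^jR$, $Q_2=2^{j+1}R$, one gets that the map taking $\mu_j$ to $\mu_{j+1}$ is given by a Markov kernel whose rows are all comparable up to a fixed multiplicative constant $C$ (this is exactly the content of Lemma~\ref{lem:exit_distribution_intermediate}, noting that the conditioning event $E_{r,Q}$ can be incorporated because $\B(0,2^{j+1}R)$ is well inside $\B(0,Q)$ once we also restrict to the hitting of $\partial\B(0,2^{j+1}R)$ before $\partial\B(0,r)$). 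A kernel with all rows mutually comparable up to a factor $C$ is a Dobrushin-type contraction: it maps any two probability measures to measures that are a factor $(1-1/C)$ closer in total variation. Hence after $k$ steps the total variation distance between the conditional exit laws from two different starting points $u,v\in\partial\B(0,R)$ is at most $2(1-1/C)^k \lesssim (1-1/C)^{\log_2(Q/R)} = (R/Q)^{c_2}$ with $c_2 = -\log_2(1-1/C) > 0$. Finally, to pass from a total-variation statement to the pointwise ratio statement in the lemma, I would note that for $w\in\partial\B(0,Q)$ the quantity $\prcond{X(\sigma)=w}{E_{r,Q}}{u}$ is itself $\asymp 1/|\partial\B(0,Q)|$ by Harnack / standard hitting estimates (it is the harmonic measure of a single boundary point seen from far inside, uniformly comparable over $w$), so an additive error of order $(R/Q)^{c_2}\cdot \frac{1}{|\partial\B(0,Q)|}$ in the numerator translates into a relative error of order $(R/Q)^{c_2}$, which is the claimed bound (after relabelling $c_1,c_2$).

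There is one technical point I would be careful about: the conditioning event $E_{r,Q}=\{\tau_{\partial\B(0,Q)}<\tau_{\partial\B(0,r)}\}$ does depend on the global behaviour of the walk, not just on the current annulus, so the ``Markov kernel at scale $j$'' must be the $h$-transform of the unconditioned crossing kernel by the harmonic function $g_j(x)=\prstart{\tau_{\partial\B(0,Q)}<\tau_{\partial\B(0,r)}}{x}$. The rows of this $h$-transformed kernel are $g_j(y)\,P_{\text{cross}}(x,y)/g_j(x)$ (up to normalization), and since both $P_{\text{cross}}(x,\cdot)$ has comparable rows (Harnack for the exit distribution) and $g_j$ is itself a positive harmonic function bounded away from $0$ and $\infty$ on the relevant annular shell (again Harnack, or just direct hitting estimates since $2^jR\ge 2r$), the comparability constant $C$ of the rows of the $h$-transformed kernel is still universal. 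Making this $h$-transform bookkeeping precise — identifying which harmonic function to tilt by at each scale and checking its oscillation is controlled — is the main obstacle; the rest is the routine Dobrushin-contraction iteration. Once that is in place, summing the geometric series over the $k=\log_2(Q/R)$ scales gives the stated power-law decay, and the proof is complete.
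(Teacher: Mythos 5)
Your overall strategy---dyadic scales $Q_j=2^jR$, the observation that Lemma~\ref{lem:exit_distribution_intermediate} makes each conditioned crossing kernel a Dobrushin-type contraction, and hence a total variation bound $\lesssim(1-1/C)^{k}\asymp(R/Q)^{c_2}$ between the conditional laws started from $u$ and from $v$---is exactly the paper's argument, and the $h$-transform issue you flag as the main obstacle is already handled by Lemma~\ref{lem:exit_distribution_intermediate} itself (its proof divides by the harmonic function $g(x)=\prstart{E_{r,Q}}{x}$ and applies Harnack to numerator and denominator separately). The genuine gap is in your final step, where you pass from the total variation bound to the pointwise ratio bound in the statement. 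Total variation controls $\sum_w|\mu_u(w)-\mu_v(w)|$, not the individual differences; nothing you have proved prevents the error from concentrating on a single $w$, in which case $|\mu_u(w)-\mu_v(w)|$ could be as large as $(R/Q)^{c_2}$, and dividing by $\mu_v(w)\asymp|\partial\B(0,Q)|^{-1}$ would give a relative error of order $(R/Q)^{c_2}\,|\partial\B(0,Q)|$, which is useless. Your phrase ``an additive error of order $(R/Q)^{c_2}\cdot\frac{1}{|\partial\B(0,Q)|}$ in the numerator'' presupposes a pointwise bound you have not established.

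The fix is the one the paper uses: run the contraction only to the penultimate scale $Q_{k-1}$, obtaining $\sum_{z}\left|\prcond{X(\sigma_{k-1})=z}{E_{r,Q}}{u}-\prcond{X(\sigma_{k-1})=z}{E_{r,Q}}{v}\right|\leq\rho_1^{k-1}$ where $\sigma_{k-1}$ is the hitting time of $\partial\B(0,Q_{k-1})$, and then perform the last crossing with a pointwise Harnack comparison rather than in total variation. By the strong Markov property,
\[
\left|\frac{\prcond{X(\sigma)=w}{E_{r,Q}}{u}}{\prcond{X(\sigma)=w}{E_{r,Q}}{v}}-1\right|
\leq \sum_{z}\left|\prcond{X(\sigma_{k-1})=z}{E_{r,Q}}{u}-\prcond{X(\sigma_{k-1})=z}{E_{r,Q}}{v}\right|\cdot\frac{\prcond{X(\sigma)=w}{E_{r,Q}}{z}}{\prcond{X(\sigma)=w}{E_{r,Q}}{v}},
\]
and since $\prcond{X(\sigma)=w}{E_{r,Q}}{v}\geq\min_{b\in\partial\B(0,Q_{k-1})}\prcond{X(\sigma)=w}{E_{r,Q}}{b}$, Lemma~\ref{lem:exit_distribution_intermediate} bounds the last ratio by $C$ uniformly in $z$, giving $C\rho_1^{k-1}\lesssim(R/Q)^{c_2}$. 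This requires no estimate on the harmonic measure of a single boundary point, which is fortunate since the comparability of $\prcond{X(\sigma)=w}{E_{r,Q}}{u}$ to $|\partial\B(0,Q)|^{-1}$ uniformly over $w$ is itself a nontrivial claim you would otherwise have to prove.
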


\begin{proof}

For each $1 \leq j \leq k$, we let $Q_j = 2^j R$.  Note that $Q_k = Q$.  Lemma~\ref{lem::exit_distribution} implies that there exists a constant $\rho_0 > 0$ such that if $u,\til{u} \in \partial \B(0,Q_{j-1})$ and $Y,\til{Y}$ are random walks starting from $u,v$ respectively both conditioned on the event $E_{r,Q}$ and $\sigma_j,\til{\sigma}_j$ denotes the first time that they hit $\partial \B(0,Q_j)$ then
\[ \pr{ Y_{\sigma_j} = \til{Y}_{\til{\sigma}_j}} \geq \rho_0.\]
Let $\sigma_{k-1}$ be the first time that $X$ hits $\partial \B(0,Q_{k-1})$.  By iterating this, it follows that there exists a constant $\rho_1 \in (0,1)$ such that for all $u,v \in \partial \B(0,R)$ we have that
\begin{equation}
\label{eqn::k1_bound}
 \sum_{z \in \partial \B_{k-1}} \left| \prcond{ X(\sigma_{k-1}) = z}{E_{r,Q}}{u} - \prcond{ X(\sigma_{k-1}) = z}{E_{r,Q}}{v} \right| \leq \rho_1^{(k-1)}.
\end{equation}
Let $\sigma$ be the first time that $X$ hits $\partial \B(0,Q)$.  Then it follows that
\begin{align*}
  & \left| \frac{\prcond{ X(\sigma) = w}{E_{r,Q}}{u}}{\prcond{X(\sigma) = w}{E_{r,Q}}{v}} - 1 \right|
= \left| \frac{\prcond{X(\sigma) = w}{ E_{r,Q}}{u}-\prcond{X(\sigma) = w}{E_{r,Q}}{v}}{\prcond{X(\sigma) = w}{E_{r,Q}}{v}}  \right|\\
\leq&  \sum_{z \in \partial \B_{k-1}} \left| \prcond{ X(\sigma_{k-1}) = z}{E_{r,Q}}{u} - \prcond{ X(\sigma_{k-1}) = z}{E_{r,Q}}{v} \right| \frac{\prcond{ X(\sigma) = w}{E_{r,Q}}{z}}{\prcond{X(\sigma) = w}{E_{r,Q}}{v}}.
\end{align*}
By the strong Markov property, we note that
\[ \prcond{X(\sigma) = w}{E_{r,Q}}{v} \geq \min_{b \in \partial \B(0,Q_{k-1})} \prcond{ X(\sigma) = w}{E_{r,Q}}{b}.\]
Combining this with \eqref{eqn::k1_bound} and using Lemma~\ref{lem:exit_distribution_intermediate} we see that the above is bounded from above by
\[  \max_{a,b \in \partial \B(0,Q_{k-1})} \frac{\prcond{X(\sigma) = w}{E_{r,Q}}{a}}{\prcond{X(\sigma) = w}{E_{r,Q}}{b}} \times \rho_1^{k-1} \leq C \rho_1^{k-1},\]
and this finishes the proof.
\end{proof}

\begin{proof}[Proof of Proposition~\ref{prop:excursion_length_starting_point}]

Fix $y \in \partial \B(0,R)$.  Let $\xi$ be the length of time it takes for the random walk, after hitting $\partial \B(0,Q)$ where $Q=n/2$, to come hit $\partial \B(0,r)$, and then hit $\partial \B(0,R)$.  Then for $y \in \partial \B(0,R)$, we have that
\begin{align*}
   \estart{ \xi \1(E_{r,Q})}{y} \leq \estart{\tau}{y} \leq \estart{\tau_{\B(0,R)^c}}{y} + \estart{\xi \1(E_{r,Q})}{y}.
\end{align*}
Since in each round of the mixing time, the random walk has a positive chance of being outside of $\B(0,Q)$, it follows that there exists a constant $C > 0$ such that
\[ \estart{ \xi \1(E_{r,Q})}{y} \leq \estart{\tau}{y} \leq C n^2 + \estart{\xi \1(E_{r,Q})}{y}.\]
Let $\sigma$ be the first time that $X$ hits $\partial \B(0,Q)$.  We have that,
\begin{align*}
   &\estart{\xi \1(E_{r,Q})}{y}
        = \sum_{w \in \partial \B_Q} \estart{\xi}{w} \prcond{X(\sigma) = w}{E_{r,Q}}{y}\\
      &= \left(1+ \rho_1^{-(k-1)} \right) \sum_{w \in \partial \B_Q}  \estart{\xi}{w} \prcond{X(\sigma) = w}{E_{r,Q}}{z} \quad\text{(Lemma~\ref{lem::exit_distribution})}\\
     &= \left(1+ \rho_1^{-(k-1)} \right) \estart{\xi \1(E_{r,Q})}{z}.
\end{align*}

Combining, we have thus shown so far that
\[ \left(1- \rho_1^{-(k-1)} \right) \estart{\xi \1(E_{r,Q})}{z} \leq \estart{\tau}{y} \leq C n^2 +  \left(1+ \rho_1^{-(k-1)} \right)  \estart{\xi \1(E_{r,Q})}{z}.\]

The result then follows because $\estart{ \xi \1(E_{r,Q})}{z} \asymp n^d / r^{d-2}$ from Lemma~\ref{lem:trRlem}.
\end{proof}

\begin{proof}[Proof of Lemma~\ref{lem:int}]

Let $N$ be the index of the first excursion from $\partial \B(x,R)$ back to itself through $\partial \B(x,r)$ which hits $x$.  Then from Lemma~\ref{lem:hittingprob} it follows that $N$ is essentially a geometric random variable with expectation
\[
\frac{r^{d-2}}{\crw} \left( 1 + O\left(\frac{r}{R} \right) + O\left( \frac{1}{r^2} \right) \right) =\frac{r^{d-2}}{\crw}  (1+o(1)),\]
since $r=o(R)$. 
Let $\zeta_i$ be the length of the $i$-th such excursion.  If~$z \in \partial \B(x,R)$, then we have that
\begin{align}\label{eq:thittrr}
 \estart{\tau_x}{z} &= \sum_{i=1}^\infty \escond{ \zeta_i}{N \geq i}{z}\pr{N \geq i} -
\estart{\tau_{\partial\B(0,R)}}{x} 
\\&= \sum_{i=1}^\infty \escond{ \zeta_i}{N \geq i}{z}\pr{N \geq i} + O(n^2).
\end{align}
Proposition~\ref{prop:excursion_length_starting_point} gives that
\begin{align*}
\escond{\zeta_i}{ N \geq i}{z} = \trb_{r,R}(1+o(1)),
\end{align*}
and hence putting everything together we obtain
\begin{align*}
\estart{\tau_x}{z} = \frac{\trb_{r,R}}{\prstart{\tau_x<\tau_{\partial\B(x,R)}}{\pi} }(1+o(1)).
\end{align*}
If $z \notin \B(x,R)$, then
\[ \estart{\tau_x}{z} = \estart{\tau_{\partial\B(x,R)}}{z}+ \frac{\trb_{r,R}}{\prstart{\tau_x<\tau_{\partial\B(x,R)}}{\pi}}(1+o(1)).\]
In this case, by Lemma~\ref{lem:trRlem} we have $\estart{\tau_{\partial\B(x,R)}}{z}=O(n^d/ R^{d-2})$, and hence from the above we get that if $z\notin \B(x,R)$, then
\[
\estart{\tau_x}{z} = \frac{\trb_{r,R}}{\prstart{\tau_x<\tau_{\partial\B(x,R)}}{\pi}}(1+o(1)).
\]  
If~$z \in \B(x,R)$, then we have that
\[ \estart{\tau_x}{z} \leq \estart{\tau_{\partial\B(x,R)}}{z} + \frac{\trb_{r,R}}{\prstart{\tau_x<\tau_{\partial\B(x,R)}}{\pi}}(1+o(1)).
\]
Therefore, combining everything we get that
\[ \tmax= \frac{\trb_{r,R}}{\prstart{\tau_x<\tau_{\partial\B(x,R)}}{\pi}}(1+o(1))\]
and this concludes the proof.
\end{proof}

\section*{Acknowledgments}

We thank Amir Dembo, Roberto Imbuzeiro-Oliveira, Yuval Peres, and Augusto Teixeira for helpful discussions.

\makeatletter
\def\@rst #1 #2other{#1}
\renewcommand\MR[1]{\relax\ifhmode\unskip\spacefactor3000 \space\fi
  \MRhref{\expandafter\@rst #1 other}{#1}}
\newcommand{\MRhref}[2]{\href{http://www.ams.org/mathscinet-getitem?mr=#1}{MR#2}}
\makeatother

\phantomsection
\bibliographystyle{hmralpha}
\bibliography{biblio}

\end{document}